\crefname{hypothesis}{Hypothesis}{Hypotheses}
\newcommand{\sinc}{\mathrm{sinc}}
\newcommand{\tanc}{\mathrm{tanc}}
\DeclareMathAlphabet{\bit}{OML}{cmm}{b}{it}
\def\fH{\mathfrak{H}}
\def\lprod{\mathop{\overleftarrow{\prod}}}
\def\<{\leqslant}           
\def\>{\geqslant}           
\def\d{\partial}
\def\wt{\widetilde}
\def\Re{\mathrm{Re}}   
\def\Im{\mathrm{Im}}   
\def\cH{\mathcal{H}}   
\def\mA{\mathbb{A}}    
\def\mR{\mathbb{R}}    
\def\mC{\mathbb{C}}    
\def\Tr{\mathrm{Tr}}       
\def\rT{\mathrm{T}}        
\def\rF{\mathrm{F}}        
\def\bS{\mathbf{S}}
\def\bE{\mathbf{E}}    
\def\bM{\mathbf{M}}    
\def\[[[{[\![\![}
\def\]]]{]\!]\!]}
\def\bra{{\langle}}
\def\ket{{\rangle}}
\def\Bra{\left\langle}
\def\Ket{\right\rangle}
\def\re{\mathrm{e}}        
\def\rd{\mathrm{d}}        
\def\fM{\mathfrak{M}}
\def\cL{\mathcal{L}}
\def\bA{\mathbf{A}}
\def\bJ{\mathbf{J}}
\def\x{\times}
\def\ox{\otimes}
\def\fE{\mathfrak{E}}
\def\fF{\mathfrak{F}}
\def\fP{\mathfrak{P}}
\def\sA{\mathsf{A}}
\def\sB{\mathsf{B}}
\def\sC{\mathsf{C}}
\def\sL{\mathsf{L}}
\def\cF{\mathcal{F}}
\def\cK{\mathcal{K}}
\def\cC{\mathcal{C}}
\def\sQ{\mathsf{Q}}
\def\cG{\mathcal{G}}
\def\cI{\mathcal{I}}
\def\cP{\mathcal{P}}
\def\cQ{{\mathcal Q}}
\def\cA{\mathcal{A}}
\def\cB{\mathcal{B}}
\def\cE{\mathcal{E}}
\def\bL{\mathbf{L}}
\def\mH{\mathbb{H}}
\def\mS{\mathbb{S}}
\def\eps{\epsilon}
\def\Ups{\Upsilon}
\def\ups{\upsilon}
\def\sn{|\!|\!|}
\def\diag{\mathop{\mathrm{diag}}}    
\def\fT{\mathfrak{T}}
\title{Quadratic-Exponential   Coherent Feedback Control of Linear Quantum Stochastic Systems\thanks{This work is supported by the Australian Research Council  grants  DP210101938, DP200102945.}}
\author{Igor G. Vladimirov$^{\dagger}$,
\qquad
Ian R. Petersen%
\thanks{School of Engineering, Australian National University, Canberra, ACT 2601, Australia,
  \email{igor.g.vladimirov@gmail.com, i.r.petersen@gmail.com%
  }.}}
\begin{document}

\maketitle

\begin{abstract}
This paper considers a risk-sensitive  optimal control  problem for a field-mediated interconnection of a quantum plant with a coherent (measurement-free) quantum controller. The plant and the controller are multimode open quantum harmonic oscillators governed by linear quantum stochastic differential equations,  which are coupled to each other and driven by multichannel  quantum Wiener processes modelling the external bosonic fields. The control objective is to internally stabilize the closed-loop system and minimize the infinite-horizon  asymptotic growth rate of  a quadratic-exponential functional which penalizes the plant variables and the controller output.  We obtain
first-order necessary conditions of optimality for this problem by computing the partial Frechet derivatives of the cost functional with respect to the energy and coupling matrices of the controller in frequency domain and state space. An infinitesimal equivalence between the risk-sensitive and weighted coherent quantum LQG control problems is also established. In addition to variational methods, we  employ spectral factorizations and infinite cascades of auxiliary classical systems. Their truncations are applicable to numerical optimization algorithms (such as the gradient descent) for coherent quantum risk-sensitive feedback synthesis. 
\end{abstract}

\begin{keywords}
Open quantum harmonic oscillator,
quantum risk-sensitive control,
coherent quantum feedback,
quadratic-exponential cost,
first-order optimality conditions,
partial Frechet derivatives.
\end{keywords}

\begin{AMS}
81Q93, 
81S25, 
81S05, 
81S22, 
81P16, 
60G15, 
15A16, 
15A24, 
49N35, 
93B52, 
49J50, 
49K15,  
42A85. 

\end{AMS}

%

\section{Introduction}

Models of open quantum dynamics, which extend isolated quantum mechanical  systems towards  interaction with environment (including  other quantum or classical systems,  such as measuring devices,  and quantum fields  \cite{BP_2006,GZ_2004}) often use the Hudson-Parthasarathy quantum stochastic  calculus \cite{HP_1984,P_1992,P_2015}.  Such models employ quantum stochastic differential equations (QSDEs) for the time evolution of dynamic variables in the form of (in general,  noncommuting)  operators on a Hilbert space driven by quantum Wiener processes. The latter are
noncommutative analogues of the classical Brownian motion \cite{KS_1991} and  represent the external bosonic fields by time-varying operators on a symmetric Fock space \cite{PS_1972}. These QSDEs reflect the unitarity of the augmented  system-field evolution and   have a specific structure involving a system Hamiltonian and operators of coupling between the system and the  external fields, which specify the energetics  of the system-field interaction. The dependence of the Hamiltonian and coupling operators on the system variables and the commutation properties of the latter lead to a particular form of the resulting QSDEs and thus affect their tractability.

An important class of tractable models 
is provided by linear QSDEs for open quantum harmonic oscillators (OQHOs). Their Hamiltonian and coupling operators are, respectively,  quadratic and linear functions of the  system variables which are organized as quantum mechanical positions and momenta satisfying the canonical commutation relations (CCRs) \cite{M_1998,S_1994}. The coefficients of the linear QSDE depend on the energy and coupling matrices (parameterizing the Hamiltonian and coupling operators) in a specific fashion and are constrained by physical realizability (PR) conditions which reflect, in particular, the preservation of the CCRs over the course of time \cite{JNP_2008,SP_2012}. Due to the linearity of the governing QSDEs, the dynamic properties of  OQHOs are similar in certain respects to those of classical linear stochastic systems, including the preservation of the Gaussian nature of quantum system states \cite{KRP_2010} in the case of vacuum input fields  \cite{P_1992}. This and other useful properties (such as the fact that cascading and feedback interconnection \cite{GJ_2009,JG_2010} of OQHOs lead to augmented OQHOs) makes OQHOs efficient building  blocks  in linear quantum systems theory \cite{NY_2017,P_2017,ZD_2022}.  The latter is concerned with performance analysis and synthesis of linear quantum stochastic systems subject to given specifications, which  include  stability, robustness with respect to unmodelled dynamics and optimality in the sense of performance criteria, such as minimization of cost functionals. In particular,  similarly to classical linear quadratic Gaussian (LQG) control  and Kolmogorov-Wiener-Hopf-Kalman filtering \cite{LS_2001}, mean square cost functionals, based on second-order moments of system variables,  are used in quantum LQG control and filtering problems \cite{B_1983,BVJ_2007,EB_2005,MP_2009,MJ_2012,NJP_2009,WM_2010,ZJ_2012} for interconnections of a quantum plant with a measurement-based classical or coherent (that is, measurement-free) quantum controller or observer.

As in the classical case, the actual properties of physical systems (and quantum states)  may differ from their nominal models postulated by the mean square optimal approaches, which makes the issue of robustness particularly  important for applications, such as quantum information processing and quantum optics \cite{NC_2000,WM_2008}. In regard to parametric uncertainties, this issue is addressed, for example,  in the $\cH_\infty$, guaranteed cost LQG quantum control and robust state generation settings \cite{JNP_2008,SPJ_2007,VPJ_2019_SICON}, while robustness to quantum statistical uncertainties is one of the main features of quantum risk-sensitive control  and filtering. The latter approach employs   time-ordered exponentials \cite{J_2004,J_2005,DDJW_2006} or  the usual  operator  exponentials \cite{AB_2018,B_1996,YB_2009,VPJ_2018a} of quadratic functions of relevant quantum variables,   which leads  to quadratic-exponential functionals (QEFs) as costs to be minimized. More precisely, similarly to its classical risk-sensitive control predecessors \cite{BV_1985,J_1973,W_1981,W_1990},  the QEF 
is organized as an exponential moment of the integral of a  positive semi-definite quadratic form of the quantum system variables over a bounded time interval. The minimization of the QEF improves the upper bound \cite{VPJ_2018b}  for the worst-case mean square costs in the presence of quantum statistical uncertainty,  when the actual system-field state deviates from its nominal (for example, vacuum field state) model and this deviation does not exceed a certain threshold  in terms of the quantum relative entropy  \cite{NC_2000,OW_2010,YB_2009}. 
A similar role is played by the QEF minimization for the upper bounds on the tail probability distribution \cite{VPJ_2018a} for the quantum trajectories.
Although the above  properties resemble the links between the classical risk-sensitive criteria and minimax LQG control \cite{DJP_2000,P_2006,PJD_2000}, they are obtained in the noncommutative quantum case using quantum probabilistic considerations \cite{H_2001,H_2018,M_1995}.

Because of the noncommutativity, the computation of the QEF (in a Gaussian state of an OQHO over a bounded time interval) is substantially more complicated in comparison with the classical case. As summarized along with complete proofs   in \cite{VPJ_2021},  it involves a quantum Karhunen-Loeve expansion of the system variables, the  randomization over an auxiliary classical Gaussian process,  and a Girsanov type representation,  leading to  a different frequency-domain formula for the infinite-horizon  asymptotic growth rate of the logarithm of the QEF for invariant  Gaussian states of stable OQHOs with vacuum input  fields. This relation expresses the  QEF growth rate in terms of an integral (over the frequency) of the log-determinant and trigonometric functions composed with the matrix-valued Fourier transforms of the real and imaginary parts of the invariant quantum covariance kernel of the system variables, thus making it less tractable than the corresponding classical $\cH_\infty$-entropy integral \cite{AK_1981,MG_1990}. Nevertheless, this frequency-domain formula has already found a preliminary application to optimality conditions for measurement-based feedback control with QEF criteria \cite{VJP_2020_IFAC}. Furthermore, a method for computing the QEF rate in state space has recently been developed in \cite{VP_2022_JFI} based on a novel spectral factorization with nonrational transfer functions using infinite cascades of auxiliary classical linear time invariant (LTI) systems  and a ``system transposition'' technique for rearranging mixed products of such systems with their duals. This rearrangement  resembles the Wick ordering \cite{J_1997,W_1950} known for the annihilation and creation operators in quantum mechanics.

The present paper aims to extend the quadratic-exponential approach from performance analysis and measurement-based control design in frequency domain to coherent quantum control synthesis in state space. More precisely, we use the QEF rate as a robust performance criterion in a coherent quantum risk-sensitive     control problem for a field-mediated feedback interconnection of a quantum plant and a quantum controller. Both of them are modelled  as multimode OQHOs governed by linear QSDEs,  which are coupled to each other and driven by multichannel  quantum Wiener processes of the external bosonic fields, so that the closed-loop system is an augmented OQHO. The control objective is to internally stabilize the closed-loop system and minimize the QEF rate as a cost functional which penalizes the plant variables and the controller output forming a ``criterion'' process.  This minimization (subject to the stability constraint) is over the energy and coupling matrices which parameterize the coherent quantum controller. In order to obtain the
first-order necessary conditions of optimality for this problem, we compute the partial Frechet derivatives of the cost with respect to the controller parameters both in frequency domain and in state space. This computation combines the frequency-domain representation of the QEF rate \cite{VPJ_2021}, 
mentioned above, with variational techniques \cite{VP_2013a,VP_2013b,VJP_2020_IFAC} and
is reduced to that of a core matrix, which encodes the Frechet derivatives of the QEF rate over the state-space matrices of the closed-loop system as if the latter were independent variables.  It is through the core matrix that a particular form of the cost enters the first-order optimality conditions. Moreover, we use the core matrix for showing that the coherent quantum risk-sensitive control problem is infinitesimally  equivalent (at the level of the first-order optimality conditions) to a weighted version of the coherent quantum LQG (CQLQG) control problem \cite{NJP_2009}. In view of the key role of the core matrix,  we develop its state-space computation using spectral factorizations in terms of infinite cascades of auxiliary classical LTI systems,  building on the results of \cite{VP_2022_JFI}.  With appropriate truncations of the cascades, this variational approach  is applicable to numerical optimization algorithms (such as the gradient descent in \cite{SVP_2017}) for coherent quantum risk-sensitive feedback synthesis. The resulting iterative procedures can be initialized with an optimal CQLQG  controller.

The paper is organized as follows.
Section~\ref{sec:sys} describes the coherent quantum feedback interconnection of a plant and a controller being considered.
Section~\ref{sec:2pCCR} discusses the two-point commutation structure of the plant and controller variables and specifies a criterion process along with their statistical properties in the invariant Gaussian state.
Section~\ref{sec:QEF} uses this process in  order to formulate the coherent quantum risk-sensitive control problem with the QEF rate as a cost functional.
Section~\ref{sec:opt} establishes first-order necessary conditions of optimality for this problem in the frequency domain using the core matrix.
Section~\ref{sec:infeq} discusses the infinitesimal equivalence of the coherent quantum risk-sensitive control problem to a weighted CQLQG control problem. Section~\ref{sec:statespace} outlines a state-space computation of the core matrix using spectral factorizations and an infinite cascade of classical linear systems.
Section~\ref{sec:conc} makes concluding remarks.
Appendices~\ref{sec:matfun} and \ref{sec:mixint} provide a subsidiary material on differentiation of functions of matrices and state-space computation  of mixed moments of transfer functions.

\section{Quantum plant with a coherent quantum feedback}
\label{sec:sys}

We consider a quantum plant,  which is affected by an external quantum noise $w$ and coupled to the output field $\eta$ of a quantum controller. In turn,  the controller is driven by another  quantum noise $\omega$ and coupled to the output field $y$ of the plant. These  multichannel quantum fields are organized into column-vectors
\begin{equation}
\label{wwWyy}
  w:= (w_k)_{1\< k \< m_1},
  \qquad
  \omega := (\omega_k)_{1\< k\< m_2},
  \qquad
    W
    :=
    \begin{bmatrix}
        w\\
        \omega
    \end{bmatrix},
    \qquad
  y := (y_k)_{1\< k\< p_1},
  \qquad
  \eta :=(\eta_k)_{1\< k\< p_2}
\end{equation}
consisting of even numbers $m_1$, $m_2$, $m:= m_1+m_2$,  $p_1$, $p_2$  of time-varying self-adjoint operators (with time arguments omitted for brevity)   specified below.
The resulting field-mediated  coherent quantum feedback  interconnection is shown  in Fig.~\ref{fig:sys}.
\begin{figure}[htbp]
\centering
\unitlength=1.3mm
\linethickness{0.6pt}
\begin{picture}(50.00,15.00)
    \put(7.5,0){\dashbox(35,15)[cc]{}}
    \put(10,2.5){\framebox(10,10)[cc]{{}}}
    \put(15,8.5){\makebox(0,0)[cc]{\scriptsize{quantum}}}
    \put(15,6.5){\makebox(0,0)[cc]{\scriptsize{plant}}}
    \put(35,8.5){\makebox(0,0)[cc]{\scriptsize{quantum}}}
    \put(35,6.5){\makebox(0,0)[cc]{\scriptsize{controller}}}
    \put(30,2.5){\framebox(10,10)[cc]{{}}}
    \put(0,7.5){\vector(1,0){10}}
    \put(50,7.5){\vector(-1,0){10}}
    \put(20,10.5){\vector(1,0){10}}
    \put(30,4.5){\vector(-1,0){10}}
    \put(-2,7.5){\makebox(0,0)[rc]{\small$w $}}
    \put(25,12){\makebox(0,0)[cb]{\small$y $}}
    \put(52,7.5){\makebox(0,0)[lc]{\small$\omega $}}
    \put(25,3){\makebox(0,0)[ct]{\small$\eta $}}
\end{picture}
\caption{The fully quantum closed-loop system with a field-mediated interconnection of the quantum plant and the coherent quantum controller. It  interacts with the external input bosonic fields modelled by the quantum Wiener processes  $w $, $\omega $ driving the QSDEs (\ref{x_y}), (\ref{xi_eta}). The plant-controller interaction is mediated by the bosonic fields of the plant output $y$   and the controller output $\eta$.
}
\label{fig:sys}
\end{figure}
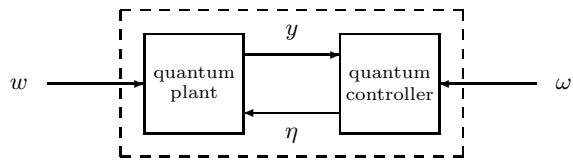
The plant and the controller are open quantum harmonic  oscillators (OQHOs) with
even numbers $n$, $\nu$  of dynamic variables $x_1, \ldots, x_n$ and $\xi_1, \ldots, \xi_\nu$ (for example, consisting of conjugate quantum mechanical  position-momentum pairs \cite{S_1994}, so that $\frac{n}{2}$, $\frac{\nu}{2}$ count the corresponding degrees of freedom).  These plant and controller variables  are time-varying self-adjoint operators on (dense domains of) a complex separable Hilbert space $\fH$, assembled into vectors  \begin{equation}
\label{xxi_X}
    x:=(x_k)_{1\< k\< n},
    \qquad
    \xi:=(\xi_k)_{1\< k\< \nu},
    \qquad
      X
  :=
  \begin{bmatrix}
    x\\
    \xi
  \end{bmatrix}
\end{equation}
and satisfying the one-point Weyl canonical commutation relations (CCRs) \cite{F_1989} in the infinitesimal Heisenberg form
\begin{equation}
\label{Theta12_XCCR}
    [x, x^{\rT}]
    =
     2i \Theta_1,
     \qquad
    [\xi, \xi^{\rT}]
    =
     2i \Theta_2,
     \qquad
     [x, \xi^{\rT}] = 0,
     \qquad
  [X,X^\rT]
  =
  2i\Theta,
  \qquad
  \Theta
  =
  \begin{bmatrix}
    \Theta_1 & 0\\
    0 & \Theta_2
  \end{bmatrix}
\end{equation}
(where all the quantum variables are considered at the same moment of time).
Here, the commutator $[\alpha, \beta]:= \alpha \beta - \beta \alpha$ of linear operators extends to the commutator matrix $[\alpha, \beta^\rT] := ([\alpha_j, \beta_k])_{1\< j \< r, 1\< k \< s}$ for vectors $\alpha:= (\alpha_j)_{1\< j \< r}$ and $\beta:= (\beta_k)_{1\< k \< s}$ of operators, $i:= \sqrt{-1}$ is the imaginary unit, and $\Theta_1 \in \mA_n$, $\Theta_2\in \mA_\nu$, $\Theta\in \mA_{n+\nu}$
are constant matrices (with $\mA_r$ the space of real antisymmetric matrices of order $r$), which are   identified with their tensor products $\Theta_k \ox \cI_\fH$, $\Theta \ox \cI_\fH$ with the identity operator $\cI_\fH$ on $\fH$. 
For what follows, the CCR matrices $\Theta_1$, $\Theta_2$ are assumed to be nonsingular, and hence,
\begin{equation}
\label{detTheta}
   \det \Theta\ne 0.
\end{equation}
The Heisenberg dynamics of the plant and controller variables in (\ref{xxi_X}) are described by Hudson-Parthasarathy \cite{HP_1984,P_1992}  linear quantum stochastic differential equations (QSDEs)
\begin{align}
\label{x_y}
    \rd x
    & =
    A x \rd t  +  B \rd w  + E \rd \eta ,
    \qquad
    \rd y
    =
    C x \rd t  +  D \rd w ,\\
\label{xi_eta}
    \rd \xi
     & =
    a\xi \rd t + b  \rd \omega  + e \rd y ,
    \qquad\quad
    \rd \eta
     =
    c \xi  \rd t + d \rd \omega,
\end{align}
with constant coefficients comprising appropriately dimensioned matrices
    $A\in \mR^{n\x n}$,
    $B\in \mR^{n\x m_1}$,
    $C\in \mR^{p_1\x n}$,
    $D\in \mR^{p_1\x m_1}$,
    $E\in \mR^{n\x p_2}$ and
    $a  \in \mR^{\nu\x \nu}$,
    $b \in \mR^{\nu\x m_2}$,
    $c \in \mR^{p_2\x \nu}$,
    $d \in \mR^{p_2\x m_2}$,
    $e \in \mR^{\nu\x p_1}$.
The plant dynamics (\ref{x_y}) are driven by the quantum Wiener process $w$ from (\ref{wwWyy}) on a symmetric Fock space \cite{P_1992} $\fF_1$ and the quantum Ito process $\eta$ of the controller output fields on the space  $\fH$. In a similar fashion,
the controller dynamics (\ref{xi_eta}) are driven by the quantum Wiener process $\omega$  from (\ref{wwWyy}) on another symmetric Fock space $\fF_2$ and the quantum Ito process $y$  of the plant output fields on the space  $\fH$. Accordingly, $\fH$ is the system-field tensor-product space given by
\begin{equation}
\label{fH}
    \fH := \fH_0 \ox \fF,
    \qquad
    \fH_0 := \fH_1\ox \fH_2,
    \qquad
    \fF := \fF_1\ox \fF_2,
\end{equation}
where $\fH_1$, $\fH_2$ are Hilbert spaces for the  initial plant and controller variables in $x(0)$, $\xi(0)$. The fact that the latter  act on different spaces explains the commutativity $[x(0),\xi(0)^\rT] = 0$ which is preserved in time as described by the third equality in (\ref{Theta12_XCCR}). The space $\fH_0$ in (\ref{fH}) is the initial space for the closed-loop system variables in (\ref{xxi_X}), so that the $n+\nu$ entries of $X(0)$ act on $\fH_0$.  The composite Fock space $\fF$ in (\ref{fH}) accommodates the augmented quantum Wiener process $W$ in (\ref{wwWyy}),
which is formed from those of the plant and the controller and has a block diagonal quantum Ito matrix  $\Omega$:
\begin{equation}
\label{WW}
    \rd W \rd W^{\rT}
    =
    \Omega \rd t,
    \qquad
    \Omega =
    \begin{bmatrix}
      \Omega_1 & 0 \\
      0 & \Omega_2
    \end{bmatrix}
    =
    I_m + iJ,
    \qquad
    \Omega_k := I_{m_k} + iJ_k,
\end{equation}
where $I_r$ is the identity matrix of order  $r$, and $\Omega_1$, $\Omega_2$ are the Ito matrices of the quantum Wiener processes $w$, $\omega$ of the plant and the controller.   The imaginary parts $J\in \mA_m$, $J_1 \in \mA_{m_1}$, $J_2\in \mA_{m_2}$, given by
\begin{equation}
\label{J}
    J
        :=
        \Im \Omega
        =
        \begin{bmatrix}
            J_1 & 0 \\
            0 & J_2
        \end{bmatrix}
        =
        I_m \ox \bJ,
        \qquad
        J_k := \Im \Omega_k = I_{m_k} \ox \bJ,
       \qquad
       \bJ
       :=
       \begin{bmatrix}
         0 & 1\\
         -1 & 0
       \end{bmatrix}
\end{equation}
(with $\bJ$ spanning the space $\mA_2$), specify 
the two-point CCRs for the quantum Wiener processes:
\begin{equation}
\label{WWcomm}
    [W(s), W(t)^\rT]
    =
    2i\min(s,t) J,
    \qquad
    s,t\>0.
\end{equation}
The future-pointing Ito increments of the quantum Wiener process $W$ (and hence, its subvectors $w$, $\omega$) commute with any adapted quantum process $\zeta$ considered at the same or an earlier moment of time:
\begin{equation}
\label{comm}
  [\rd W(t), \zeta(s)^\rT] = 0,
  \qquad
  t \> s \> 0,
\end{equation}
with the adaptedness being understood with respect to the filtration $(\fH_t)_{t\>0}$ of the system-field space $\fH$ in (\ref{fH}) given by
\begin{equation}
\label{fHt}
    \fH_t
    :=
    \fH_0 \ox \fF_t,
\end{equation}
where $\fF_t$ is the Fock subspace associated with the time interval $[0,t]$. The QSDEs (\ref{x_y}), (\ref{xi_eta}) are combined into one QSDE for the quantum process $X$ in (\ref{xxi_X}) driven by the external bosonic field $W$ in (\ref{wwWyy}):
\begin{equation}
\label{dX_cAB}
  \rd X = \cA X \rd t + \cB \rd W,
  \qquad
    \cA
  :=
  \begin{bmatrix}
    A & Ec\\
    eC & a
  \end{bmatrix},
  \qquad
  \cB
  :=
  \begin{bmatrix}
    B & Ed\\
    eD & b
  \end{bmatrix},
\end{equation}
where the closed-loop system matrices $\cA \in \mR^{(n+\nu)\x (n+\nu)}$, $\cB \in \mR^{(n+\nu)\x m}$ are computed
similarly to the classical case. 
However, since 
the plant and the controller are OQHOs, their matrices are not arbitrary and, in accordance with the physical realisability (PR) conditions \cite{JNP_2008,SP_2012} and the field-mediated coupling, are parameterized as
\begin{align}
\label{ABCE}
    A
    & =
    2\Theta_1(R_1 + M_1^{\rT}J_1 M_1 + L_1^{\rT}\wt{J}_2L_1),
    \qquad\
    B   = 2\Theta_1 M_1^{\rT},
    \qquad\
    C  =2DJ_1 M_1,
    \qquad\
    E = 2\Theta_1 L_1^{\rT},\\
\label{abce}
    a
    & =
    2\Theta_2
    (R_2 + M_2^{\rT}J_2 M_2 + L_2^{\rT}\wt{J}_1L_2),
    \qquad\ \
    b  = 2\Theta_2 M_2^{\rT},
    \qquad\ \ \,
    c = 2dJ_2 M_2,
    \qquad\ \ \,
    e  = 2\Theta_2 L_2^{\rT}.
\end{align}
Also, the feedthrough matrices $D$, $d$  in (\ref{x_y}), (\ref{xi_eta}) are formed from conjugate pairs of rows of permutation matrices of orders $m_1$, $m_2$, so that $p_1\< m_1$,  $p_2 \< m_2$, and, similarly to (\ref{J}), (\ref{WWcomm}), the matrices 
    $\wt{J}_1:= DJ_1D^{\rT} = I_{p_1/2}\ox \bJ$ and $
    \wt{J}_2:= dJ_2d^{\rT} = I_{p_2/2}\ox \bJ$
are the CCR matrices for the plant and controller output fields  $y$, $\eta$. 
Here, $R_1\in \mS_n$, $R_2\in \mS_\nu$ (with $\mS_r$ the space of real symmetric  matrices of order $r$) are energy matrices of the plant and the controller which specify their individual Hamiltonians
$\frac{1}{2} x^\rT R_1 x$,
$\frac{1}{2} \xi^\rT R_2 \xi$.   Also, $M_1\in \mR^{m_1\x n}$, $L_1\in \mR^{p_2\x n}$ are the matrices of coupling of the plant to the external input field $w$ and the controller output $\eta$,  with
${\small\begin{bmatrix}
  M_1\\ L_1
\end{bmatrix}} x$ the vector of coupling operators. Similarly, $M_2\in \mR^{m_2\x \nu }$, $L_2\in \mR^{p_1\x \nu }$ are the matrices of coupling of the controller to the external input field $\omega$ and the plant output $y$, with
${\small\begin{bmatrix}
  M_2\\ L_2
\end{bmatrix}} \xi$ the vector of coupling operators.  The QSDE in (\ref{dX_cAB}) describes an OQHO whose matrices are parameterized as
\begin{equation}
\label{cABpar}
    \cA
     =
    2\Theta(R + M^{\rT}J M),
    \qquad
    \cB   = 2\Theta M^{\rT}
\end{equation}
(with $\Theta$, $J$ the CCR matrices from (\ref{Theta12_XCCR}), (\ref{J}))
by the energy matrix $R \in \mS_{n+\nu}$,  which specifies the Hamiltonian $\frac{1}{2} X^\rT R X$ of the closed-loop system,  and  the matrix  $M \in \mR^{m \x (n+\nu)}$  of coupling between the system and the external input field $W$ in (\ref{wwWyy}), giving rise to the vector $MX$ of system-field coupling operators. These energy and coupling matrices $R$, $M$ are computed by substituting the parameterizations (\ref{ABCE}), (\ref{abce}) into (\ref{dX_cAB}), 
which leads to
\begin{equation}
\label{RMclos}
    R
    =
    \begin{bmatrix}
      R_1                                       & \frac{1}{2}(L_1^{\rT}c +C^{\rT}L_2)\\
      \frac{1}{2}(c^{\rT}L_1+L_2^{\rT}C)   & R_2
    \end{bmatrix},
    \qquad
    M
    =
    \begin{bmatrix}
      M_1           & D^{\rT}L_2 \\
      d^{\rT}L_1    & M_2
    \end{bmatrix}.
\end{equation}
The relations (\ref{RMclos}) describe the influence of the controller parameters $R_2$, $M_2$, $L_2$  on the energetics (and hence, the dynamics (\ref{dX_cAB})) of the closed-loop system, with the feedthrough matrix $d$ (which specifies the ``amount'' of the quantum noise $\omega$  in the controller output field $\eta$) being fixed.
Due to the specific structure (\ref{cABpar}) (including the symmetry of the energy matrix $R$ and the antisymmetry of the CCR matrices $\Theta$, $J$ from (\ref{Theta12_XCCR}), (\ref{J})),  the matrices $\cA$, $\cB$ satisfy 
\begin{equation}
\label{cABPR}
  \cA \Theta + \Theta \cA^\rT + \mho= 0,
  \qquad
  \mho := \cB J \cB^\rT,
\end{equation}
which is one of the PR conditions mentioned above and pertaining to the preservation of the CCRs (\ref{Theta12_XCCR}). 
The one-point CCR matrix $\Theta$ is completely specified by the commutation structure of the initial plant and controller variables and does not  depend on the energy and coupling matrices of the controller. However, the latter affect not only the closed-loop system energetics through (\ref{RMclos}), but also the two-point CCRs of the plant and controller variables.

\section{Two-point commutations, criterion process and invariant system state}
\label{sec:2pCCR}

As a solution of the linear QSDE in (\ref{dX_cAB}),  the quantum process $X$ in (\ref{xxi_X}) satisfies
$    X(t) = \re^{(t-s)\cA} X(s) + \int_s^t \re^{(t-\tau) \cA}\cB \rd W(\tau)$ for
all $    t\> s\> 0$,
which, together with the commutativity (\ref{comm}),   leads
to the following two-point CCRs (see,  for example, \cite[Eqs. (46), (47)]{VPJ_2018a}):
\begin{equation}
\label{XXcomm}
    [X(s), X(t)^\rT]
    =
    2iV(s-t),
    \qquad
    s,t\>0,
    \qquad
        V(\tau)
    :=
    \left\{
    {\begin{matrix}
    \re^{\tau \cA}\Theta &   {\rm if}\  \tau\> 0\\
    \Theta \re^{-\tau \cA^\rT}& {\rm if}\  \tau< 0
    \end{matrix}}
    \right.
    =
    (V_{jk}(\tau))_{1\< j,k\< 2},
\end{equation}
with the one-point CCRs (\ref{Theta12_XCCR}) being a particular case  since $V(0) = \Theta$. Here, the two-point commutator kernel $V$ satisfies  $V(\tau)     =
    -V(-\tau)^\rT
$
for any time difference $
    \tau \in \mR$ and is partitioned into four blocks
$    V_{11}(\tau) = -V_{11}(-\tau)^\rT \in \mR^{n\x n}$,
$
    V_{12}(\tau) = -V_{21}(-\tau)^\rT \in \mR^{n\x \nu}$, $V_{22}(\tau) = -V_{22}(-\tau)^\rT \in \mR^{\nu\x \nu}$.
The first block-column of $V$   is computed as
\begin{equation}
\label{L11dot_L21dot}
    V_{\bullet 1}(\tau)
    :=
    (V_{j1}(\tau))_{1\< j\< 2}
    =
    \re^{\tau \cA }
    \begin{bmatrix}
        \Theta_1\\
        0
    \end{bmatrix}
    =
    (\re^{\tau \cA })_{\bullet 1}\Theta_1 ,
    \qquad
    \tau \> 0,
\end{equation}
where use is made of the corresponding block-columns of $\re^{\tau \cA}$ and the block diagonal matrix $\Theta$ from (\ref{Theta12_XCCR}). In combination with the structure of the matrix $\cA$ in (\ref{dX_cAB}), the relation  (\ref{L11dot_L21dot}) leads to the one-sided time derivative $\dot{V}_{21}(0+) =    eC \Theta_1$ which depends on $e$. Furthermore, the two-point CCR kernel $V_{11}$ of the plant variables in (\ref{XXcomm}) satisfies $\ddot{V}_{11}(0+) = (\cA^2)_{11}\Theta_1 = (A^2 + EceC)\Theta_1$ and is also affected by the controller matrices.

The dependence of the two-point commutation structure of the plant and controller variables on the controller parameters plays a role in the risk-sensitive optimal control problem for the closed-loop system formulated in Section~\ref{sec:QEF} in terms of a
``criterion'' process $Z$,  which consists of $r$ time-varying self-adjoint quantum variables defined by
 \begin{equation}
\label{Z_U_cC}
  Z
  :=
  N X + K U = \cC X,
  \qquad
  U := c\xi,
  \qquad
    \cC
  :=
  \begin{bmatrix}
    N & K c
  \end{bmatrix}.
\end{equation}
Here,
$N \in \mR^{r \x n}$, $K \in \mR^{r \x p_2}$ are given weighting matrices, which specify control design preferences in regard to relative importance of the system variables and are free from PR constraints. Also, $U$ 
 is an auxiliary quantum process
which is the drift of the controller output QSDE in (\ref{xi_eta}), corresponding to (though different from) the classical actuator signal.  Accordingly, the matrix $\cC \in \mR^{r\x (n+\nu)}$ in (\ref{Z_U_cC}) depends affinely on the controller matrix $c$ from (\ref{abce}).
In view of (\ref{XXcomm}),   the criterion process $Z$ has the following two-point CCRs:
\begin{equation}
\label{ZZcomm}
    [Z(s), Z(t)^\rT]
    =
    2i\Lambda(s-t),
    \qquad
    s,t\>0,
    \qquad
  \Lambda(\tau):= \cC V(\tau) \cC^\rT.
\end{equation}
The two-point CCR function $\Lambda$ satisfies $\Lambda(\tau)= -\Lambda(-\tau)^\rT$ for any
$
  \tau \in \mR$ (similarly to $V$) and, for any given time horizon $T>0$,  specifies a compact skew self-adjoint operator $\cL_T$  on the Hilbert space $L^2([0,T],\mC^r)$ of square integrable $\mC^r$-valued functions on the time interval $[0,T]$ as
\begin{equation}
\label{cL}
    \cL_T(f)(s)
    :=
    \int_0^T
    \Lambda(s-t) f(t)
    \rd t,
    \qquad
    0\< s \< T,
\end{equation}
which will be used in Section~\ref{sec:QEF}.
Note that the commutation structure (\ref{XXcomm})   of the plant and controller variables (and also (\ref{ZZcomm}) for the process $Z$) does not depend on the system-field quantum state.

 In what follows, we will be concerned
with the case of vacuum input fields, so that the underlying density operator  $\rho$  on the system-field space $\fH$ in (\ref{fH}) is given by
$\rho:= \rho_0\ox \ups$,
where $\rho_0$ is the initial  quantum state of the closed-loop system on $\fH_0$, and $\ups$ is the vacuum field state \cite{P_1992} on the Fock space $\fF$.   Then, by the results of \cite{VPJ_2018a} applied to the closed-loop system with any  \emph{stabilizing} controller (which makes the matrix $\cA$ in (\ref{dX_cAB}) Hurwitz), the plant and controller  variables have a unique  invariant multipoint zero-mean Gaussian quantum state. Assuming that the system is initialized at the invariant state (and hence, retains it over the course of time), $X$ is a zero-mean stationary Gaussian quantum process.  These properties are inherited from $X$ by the process $Z$ in (\ref{Z_U_cC}), and its two-point quantum covariance function takes the form
\begin{equation}
\label{EZZ}
\bE(Z(s)Z(t)^\rT)
  =
  P(s-t) + i\Lambda(s-t),
  \qquad
  s,t\> 0,
\end{equation}
where $\bE \zeta := \Tr(\rho \zeta)$ is the quantum expectation over the density operator $\rho$. While the imaginary part $\Lambda$ is given by (\ref{ZZcomm}) irrespective of the quantum state, the real part $P$  is associated with the invariant Gaussian state:
\begin{equation}
\label{P}
    P(\tau)
    =
    \left\{
    {\begin{matrix}
    \cC\re^{\tau \cA}\cP \cC^\rT& {\rm if}\  \tau\> 0\\
    \cC \cP\re^{-\tau \cA^\rT}\cC^\rT & {\rm if}\  \tau < 0
    \end{matrix}}
    \right.
    =
    P(-\tau)^\rT,
    \qquad
    \tau \in \mR.
\end{equation}
Here, the matrix $\cP \in \mS_{n+\nu}^+$ (with $\mS_r^+$ the set of real positive semi-definite symmetric matrices of order $r$) describes the invariant one-point covariances of the plant and controller variables and coincides with the controllability Gramian \cite{KS_1972}  of the pair $(\cA, \cB)$,  which is the  unique  solution
\begin{equation}
\label{cP}
    \cP
    :=
    \Re \bE (X X^\rT)
    =
    \int_0^{+\infty}
    \re^{t\cA }
    \cB\cB^\rT
    \re^{t\cA^\rT }
    \rd t
    =:
    \bL_\cA(\cB\cB^\rT)
\end{equation}
of the algebraic Lyapunov equation (ALE) $\cA \cP + \cP \cA^\rT + \cB\cB^\rT=0$ and
depends on $\cB\cB^\rT$ through the linear  operator $\bL_\cA$. 
  Moreover, $P+i\Lambda$ in (\ref{EZZ}) is a positive semi-definite Hermitian kernel,  whose Fourier transform
\begin{align}
\label{QSD}
    \int_\mR
    \re^{-i\lambda t }
    (P(t)+i\Lambda(t))
  \rd t
    =
  \Phi(\lambda)+i\Psi(\lambda)
  =
  F(i\lambda) \Omega F(i\lambda)^*
  \succcurlyeq
  0,
  \qquad
  \lambda \in \mR,
\end{align}
takes values in the set $\mH_r^+$ of positive semi-definite matrices in the space $\mH_r = \mS_r + i\mA_r$ of complex Hermitian matrices of order $r$ and plays the role of a ``quantum spectral density'' of the stationary Gaussian quantum process $Z$. Here, $(\cdot)^*:= {{\overline{(\cdot)}}}^\rT$ is the complex conjugate transpose, and use is made of the quantum Ito matrix $\Omega$ from (\ref{WW}) along with
\begin{equation}
\label{Phi0_Psi0}
    \Phi(\lambda)
     :=
    \int_\mR \re^{-i\lambda t }
    P(t)
    \rd t
    =
    F(i\lambda) F(i\lambda)^*,
    \qquad
    \Psi(\lambda)
     :=
    \int_\mR \re^{-i\lambda t }
    \Lambda(t)
    \rd t
    =
    F(i\lambda) J F(i\lambda)^*,
\end{equation}
where $J$ is the matrix from (\ref{J}). 
The right-hand sides of (\ref{QSD}), (\ref{Phi0_Psi0}) are factorized in terms of a strictly proper rational transfer function $F$ with the state-space realization triple $(\cA,\cB,\cC)$  for the closed-loop system (\ref{dX_cAB}), (\ref{Z_U_cC})  from the incremented quantum Wiener process $W$ to the process $Z$:
\begin{equation}
\label{F0_G}
    F(s)
    :=
    \cC
    G(s)
    \cB,
    \qquad
    G(s):= (sI_{n+\nu} - \cA)^{-1},
    \qquad
    s \in \mC,
\end{equation}
where $G$ coincides with the negative of the resolvent for the matrix $\cA$. 
Note that $\Phi(\lambda) \in\mH_r^+$ in (\ref{Phi0_Psi0}), while  $\Psi(\lambda)$   is skew Hermitian at any frequency $\lambda \in \mR$.

\section{Quadratic-exponential performance criterion}
\label{sec:QEF}

The risk-sensitive control setting considered below starts from describing the coherent quantum feedback performance over a finite time horizon $T>0$ in terms of the quadratic-exponential functional (QEF)
\begin{equation}
\label{Xi}
    \Xi_{\theta,T}
    :=
    \bE \re^{\frac{\theta}{2} Q_T}.
\end{equation}
Here, $\theta>0$ is a risk sensitivity parameter specifying the exponential penalty on the following positive semi-definite self-adjoint quantum variable on the subspace $\fH_T$  in (\ref{fHt}):
\begin{equation}
\label{QT}
    Q_T
    :=
    \int_0^T
    Z(t)^\rT Z(t)
    \rd t
    =
    \int_0^T
    X(t)^\rT \cC^\rT \cC X(t)
    \rd t.
\end{equation}
The latter depends in a quadratic fashion  on  the history of the plant and controller variables in (\ref{xxi_X}) over the time interval $[0,T]$ through the criterion process $Z$ from (\ref{Z_U_cC}). Accordingly, the usual mean square cost functionals are a limiting case of (\ref{Xi}) as
\begin{equation}
\label{Xilim}
    \lim_{\theta \to 0+}
    \frac{\Xi_{\theta,T}-1}{\theta}
    =
    \lim_{\theta \to 0+}
    \frac{\ln \Xi_{\theta,T}}{\theta}
    =
    \frac{1}{2}
    \bE Q_T.
\end{equation}
In particular, (\ref{QT}) acquires the form $Q_T= \int_0^T (X(t)^\rT \Pi_1 X(t) + U(t)^\rT \Pi_2 U(t))\rd t$ by letting
$N := {\small\begin{bmatrix}
  \sqrt{\Pi_1}\\
  0
\end{bmatrix}}$ and $K := {\small\begin{bmatrix}
  0 \\
  \sqrt{\Pi_2}
\end{bmatrix}}$, where  $\Pi_1 \in \mS_n^+$, $\Pi_2\in \mS_{p_2}^+$, in which case (\ref{Xi}) imposes a quadratic-exponential penalty on the plant variables and the controller output variables in the conventional form of classical risk-sensitive cost functionals \cite{J_1973,W_1981}. 
Alternatively,  a quantum risk-sensitive filtering problem is  obtained by letting $E=0$ (which decouples the plant dynamics (\ref{x_y}) from the controller), $N: = \sqrt{\Pi}$ and $K:= -N$, where $\Pi\in \mS_n^+$. In this case, $r=n=p_2$, and  (\ref{QT}) takes the form   $Q_T=\int_0^T (X(t)-U(t))^\rT \Pi (X(t)-U(t))\rd t$. Accordingly,  the controller becomes a coherent  quantum observer \cite{MJ_2012,VP_2013b}, with its output drift $U$ in (\ref{Z_U_cC})  playing the role of an estimator for the plant variables, so that (\ref{Xi}) penalizes the resulting ``estimation error'' process $X-U$ (see also \cite{YB_2009} for measurement-based risk-sensitive filtering formulations).

Returning to the general coherent quantum control setting with arbitrary weighting matrices $N$, $K$ in (\ref{Z_U_cC}), suppose the coherent quantum controller is stabilizing, the input fields are in the vacuum state, and the closed-loop system is in the invariant Gaussian state described in Section~\ref{sec:2pCCR}.  
Then, by \cite[Theorem~8.1]{VPJ_2021} applied to the stationary Gaussian quantum process $Z$ in (\ref{Z_U_cC}),  under  the condition that it is ``completely noncommutative'' in the sense that the operator $\cL_T$ in (\ref{cL}) has no zero eigenvalues,
\begin{equation}
\label{nozer}
  \ker \cL_T =\{0\}
  \quad
  {\rm for\ all\ sufficiently\ large}\ T>0,
\end{equation}
the infinite-horizon  asymptotic behaviour of the QEF $\Xi_{\theta,T}$ in (\ref{Xi}) is described by the growth rate
\begin{equation}
\label{Ups}
    \Ups_\theta
    :=
\lim_{T\to +\infty}
    \Big(
        \frac{1}{T}
        \ln \Xi_{\theta,T}
    \Big)
     =
    -
    \frac{1}{4\pi}
    \int_{\mR}
    \ln\det
    D_\theta(\lambda)
    \rd \lambda.
\end{equation}
Here,
\begin{equation}
\label{D}
    D_\theta(\lambda)
     :=
    c_\theta(\lambda)-
        \theta
        \Phi(\lambda)
        \sinc
        (\theta \Psi(\lambda))
        =
    c_\theta(\lambda)
-
        \Phi(\lambda)
        \Psi(\lambda)^{-1}
        s_\theta(\lambda)
\end{equation}
(with $\sinc z := \frac{\sin z}{z}$ an entire function extended  to $\sinc 0 := 1$ by continuity)
is a $\mC^{r\x r}$-valued function on $\mR$ defined using
\begin{equation}
\label{cs}
    c_\theta(\lambda)
    :=
      \cos(
        \theta \Psi(\lambda)
    ),
    \qquad
    s_\theta(\lambda)
    :=
      \sin(
        \theta \Psi(\lambda)
    )
\end{equation}
and the Fourier transforms (\ref{Phi0_Psi0}) of the covariance and commutator kernels $P$, $\Lambda$ from (\ref{P}), (\ref{ZZcomm}). The limit in (\ref{Ups}) holds for all sufficiently small values of the risk sensitivity parameter $\theta>0$ in (\ref{Xi}) in the sense that
\begin{equation}
\label{spec1}
    \theta
    \sup_{\lambda \in \mR}
    \lambda_{\max}
    (
        \Phi(\lambda)
        \tanc
        (\theta \Psi(\lambda))
    )
    < 1,
\end{equation}
where $\lambda_{\max}(\cdot)$ is the largest eigenvalue.

As a function of $\theta$ (subject to (\ref{spec1})), the QEF rate (\ref{Ups}) can be used for quantifying the large deviations of quantum trajectories \cite{VPJ_2018a} of the closed-loop system (in the form of upper bounds on the tail distributions of $Q_T$ in (\ref{QT})) and its robustness to statistical uncertainties with a quantum relative entropy \cite{OW_2010} description    (see \cite[Section~IV]{VPJ_2018b}
 and references therein including \cite{YB_2009}). These quantum robustness bounds are similar to those in minimax LQG control of classical stochastic systems \cite{DJP_2000,P_2006,PJD_2000} and depend on the QEF rate $\Ups_\theta$ in a monotonic fashion, so that smaller values of $\Ups_\theta$ yield stronger bounds. Due to this monotonicity, the minimization of $\Ups_\theta$ over the controller parameters is beneficial for robust performance of the closed-loop system. 
 The resulting infinite-horizon risk-sensitive control problem  is formulated for a given $\theta>0$   as the minimization
\begin{equation}
\label{opt}
  \Ups_\theta \longrightarrow\inf
\end{equation}
of the QEF rate (\ref{Ups}) over the energy and coupling matrices $R_2$, $M_2$, $L_2$ of the coherent quantum controller in (\ref{xi_eta}), (\ref{abce}) subject to the internal stability (so as to make $\cA$ in (\ref{dX_cAB}) Hurwitz) and the spectral constraint (\ref{spec1}).

 In comparison with classical risk-sensitive control \cite{BV_1985,J_1973,W_1981}, the controller parameters influence the cost functional (\ref{Ups}) not only through the statistical properties of the plant and controller variables, captured by the function $\Phi$ in (\ref{Phi0_Psi0}), but also through their two-point commutation  structure described by $\Psi$, which enters (\ref{Ups}) in view of (\ref{D}).

The development of conditions of optimality for  the problem (\ref{opt}) in Section~\ref{sec:opt} employs a spectral density representation of the QEF rate provided by Lemma~\ref{lem:UpsDel} below. To this end,
in view of (\ref{cs}), the matrix  (\ref{D}) is expressed in terms of the matrix exponentials $
    c_\theta(\lambda) \pm i s_\theta(\lambda)  = \re^{\pm i\theta \Psi(\lambda)} \succ 0
$ as
\begin{equation}
\label{Dexp}
    D_\theta
     =
    (
        I_r -\theta (\Phi - i\Psi)\phi(2i\theta \Psi)
    )
    \re^{-i\theta \Psi}
\end{equation}
(the frequency argument $\lambda \in \mR$ is omitted for brevity)
similarly to \cite[Eq. (51)]{VP_2022_JFI}. Here,
$\phi$  is an entire function,  given by
\begin{equation}
\label{phi}
  \phi(u)
  :=
  \frac{\re^{u}-1}{u}
  =
  \sum_{k=0}^{+\infty}
  \phi_k u^k,
  \qquad
  \phi_k:= \frac{1}{(k+1)!},
  \qquad
  u  \in \mC
\end{equation}
(with $\phi(0):= 1$ by continuity) and
taking positive values on the real line: $\phi(\mR)\subset (0, +\infty)$.
In (\ref{Dexp}),   the function $\phi$ maps the matrix $2i\theta \Psi(\lambda) \in \mH_r$ to  a positive definite Hermitian matrix:
\begin{equation}
\label{phipos}
    \phi(2i\theta \Psi(\lambda)) \succ 0 ,
    \qquad
    \lambda \in \mR.
\end{equation}
From (\ref{Phi0_Psi0}), with the transfer matrices $F(s)$,  $G(s)$ in (\ref{F0_G}) evaluated at $s:= i\lambda$ with $\lambda \in \mR$ as before,  it follows that
\begin{equation}
\label{PhiPsiEE}
    \Phi-i\Psi = F\Omega^\rT F^* = FS^2F^*. 
\end{equation}
Here, the first equality is similar to the second equality in  (\ref{QSD}), and  use is made of  a square root
\begin{equation}
\label{Oroot}
  S:=
  \sqrt{\Omega^\rT}
  =
  \frac{1}{\sqrt{2}} \Omega^\rT\in \mH_m^+
\end{equation}
of the matrix $    \Omega^\rT = I_m - iJ = \overline{\Omega}$ due to the structure of the quantum Ito matrix $\Omega$ in (\ref{WW}) and its imaginary part $J$ in (\ref{J})
(indeed, since $J^2 = -I_m$, then  $(\Omega^\rT)^2 = I_m - J^2 - 2iJ =  2\Omega^\rT$, which also implies that $\frac{1}{2}\Omega^\rT$ is idempotent). 
In view of (\ref{PhiPsiEE}),  the matrix
$    (\Phi - i\Psi)\phi(2i\theta \Psi)
     =
    F S^2 F^*\phi(2i\theta \Psi)
$ is, up to zero eigenvalues,    isospectral  to
\begin{equation}
\label{Sig}
    \Sigma_\theta
    :=
    S F^*
     \phi(2i\theta \Psi)
     F S
\end{equation}
obtained by swapping the factors $F S$ and $S F^*\phi(2i\theta \Psi)$. The resulting
function $\Sigma_\theta: \mR\to \mH_m^+$ has all the properties of the spectral density of a classical $\mC^m$-valued random process.
Indeed, the Hermitian property and positive semi-definiteness of the matrix $\Sigma_\theta(\lambda)$ at any frequency $\lambda$ follow from (\ref{phipos}) and the Hermitian property of $S$ in (\ref{Oroot}). 
The significance of $\Sigma_\theta$ for computing the QEF rate (\ref{Ups}) is based on  the following lemma, similar to \cite[Lemma 1]{VP_2022_JFI}. For its formulation, we associate  with $\Sigma_\theta$ a function $\Delta_\theta: \mR \to \mH_m$ given by
\begin{equation}
\label{Del}
    \Delta_\theta
     :=
     I_m - \theta \Sigma_\theta.
\end{equation}
Note that the fulfillment of the condition (\ref{spec1}) makes the matrix $\Delta_\theta(\lambda)$ uniformly positive definite over $\lambda \in \mR$ in the sense that its smallest eigenvalue is separated from zero:
\begin{equation}
\label{unipos}
    \inf_{\lambda\in \mR}
    \lambda_{\min}
    (\Delta_\theta
    (\lambda)) >0.
\end{equation}
Therefore, $\Delta_\theta$ is a spectral density function which, due to the decay of $\Sigma_\theta$ in (\ref{Sig}) at infinity,  satisfies $\lim_{\lambda \to \infty} \Delta_\theta(\lambda) = I_m$.

\begin{lemma}
\label{lem:UpsDel}
Suppose the matrix $\cA$ of the closed-loop system in (\ref{dX_cAB}), (\ref{Z_U_cC}),  is Hurwitz,   and the conditions (\ref{nozer}), (\ref{spec1}) are satisfied. Then the QEF rate (\ref{Ups}) is represented in terms of (\ref{Del})  as
\begin{equation}
\label{UpsDel}
    \Ups_\theta
     =
    -
    \frac{1}{4\pi}
    \int_{\mR}
    \ln\det
    \Delta_\theta(\lambda)
    \rd \lambda.
\end{equation}
\end{lemma}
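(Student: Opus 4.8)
The plan is to transform the integrand $\ln\det D_\theta(\lambda)$ in (\ref{Ups}) into $\ln\det\Delta_\theta(\lambda)$ modulo a term whose integral over $\mR$ vanishes. First I would combine the exponential representation (\ref{Dexp}) of $D_\theta$ with the factorization (\ref{PhiPsiEE}), $\Phi - i\Psi = FS^2F^*$, to obtain
\begin{equation*}
    D_\theta
    =
    \big(I_r - \theta FS^2F^*\phi(2i\theta\Psi)\big)\,\re^{-i\theta\Psi}
\end{equation*}
(the frequency argument $\lambda\in\mR$ being suppressed), and then take determinants, using multiplicativity, to get $\det D_\theta = \det\!\big(I_r - \theta FS^2F^*\phi(2i\theta\Psi)\big)\det(\re^{-i\theta\Psi})$.

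The crucial step is to recognize the first factor as $\det\Delta_\theta$. Setting $A:=FS\in\mC^{r\x m}$ and $B := \theta SF^*\phi(2i\theta\Psi)\in\mC^{m\x r}$, one has $AB = \theta FS^2F^*\phi(2i\theta\Psi)$, while $BA = \theta SF^*\phi(2i\theta\Psi)FS = \theta\Sigma_\theta$ by the definition (\ref{Sig}); hence the Sylvester (Weinstein--Aronszajn) determinant identity $\det(I_r - AB) = \det(I_m - BA)$ yields $\det\!\big(I_r - \theta FS^2F^*\phi(2i\theta\Psi)\big) = \det(I_m - \theta\Sigma_\theta) = \det\Delta_\theta$ in view of (\ref{Del}). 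This is the precise form of the ``isospectrality up to zero eigenvalues'' observed after (\ref{Sig}). For the remaining factor, $\det(\re^{-i\theta\Psi}) = \re^{-i\theta\Tr\Psi}$; since $\Psi(\lambda) = F(i\lambda)JF(i\lambda)^*$ is skew-Hermitian (as noted at the end of Section~\ref{sec:2pCCR}), $\Tr\Psi(\lambda)$ is purely imaginary and $\re^{-i\theta\Tr\Psi(\lambda)}>0$. Together with the uniform positive definiteness (\ref{unipos}) of $\Delta_\theta$, this shows $\det D_\theta(\lambda)>0$, so the real logarithm splits without branch ambiguity: $\ln\det D_\theta = \ln\det\Delta_\theta - i\theta\Tr\Psi$.

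It then remains to verify $\int_\mR\Tr\Psi(\lambda)\,\rd\lambda = 0$. Applying Fourier inversion to the definition of $\Psi$ in (\ref{Phi0_Psi0}) gives $\frac{1}{2\pi}\int_\mR\Psi(\lambda)\,\rd\lambda = \Lambda(0) = \cC V(0)\cC^\rT = \cC\Theta\cC^\rT$ by (\ref{ZZcomm}) and $V(0)=\Theta$; the integral converges because $F$ is strictly proper, so $\Psi(\lambda) = O(|\lambda|^{-2})$ at infinity. Since the CCR matrix $\Theta$ is antisymmetric (\ref{Theta12_XCCR}), the real matrix $\cC\Theta\cC^\rT$ is antisymmetric, hence trace-free, so $\int_\mR\Tr\Psi(\lambda)\,\rd\lambda = 2\pi\Tr(\cC\Theta\cC^\rT)=0$. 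Combining the above, $\int_\mR\ln\det D_\theta(\lambda)\,\rd\lambda = \int_\mR\ln\det\Delta_\theta(\lambda)\,\rd\lambda$, and substituting into (\ref{Ups}) yields (\ref{UpsDel}).

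I expect the main difficulty to lie not in any single computation but in the careful bookkeeping around the logarithms and the integrability: one must ensure that $\det D_\theta(\lambda)$ and $\det\Delta_\theta(\lambda)$ remain on the positive real axis uniformly in $\lambda$ (so that the pointwise identity for $\ln\det$ is unambiguous and the integrals in (\ref{Ups}), (\ref{UpsDel}) are genuinely real), and that the decay of $F$ and $\Psi$ at infinity legitimizes the termwise Fourier inversion used to evaluate $\int_\mR\Tr\Psi\,\rd\lambda$. The determinant manipulation itself, once the factorizations (\ref{PhiPsiEE}) and (\ref{Sig}) are in place, is routine.
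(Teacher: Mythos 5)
Your proposal is correct and follows essentially the same route as the paper's proof: the same use of the exponential representation of $D_\theta$, the reduction of $\det(I_r-\theta(\Phi-i\Psi)\phi(2i\theta\Psi))$ to $\det\Delta_\theta$ (which you make explicit via the Sylvester determinant identity, where the paper simply invokes the isospectrality remark after the definition of $\Sigma_\theta$), and the Fourier-inversion argument showing $\int_\mR\Tr\Psi\,\rd\lambda=2\pi\Tr(\cC\Theta\cC^\rT)=0$ by antisymmetry of $\Theta$. Your added care about the positivity of $\det D_\theta$ and the branch of the logarithm is a welcome refinement but does not change the argument.
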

\begin{proof}
From the isospectrality,  up to zero eigenvalues, of $(\Phi - i\Psi)\phi(2i\theta \Psi)$ to $\Sigma_\theta$ in (\ref{Sig}), it follows that $I_r - \theta (\Phi - i\Psi)\phi(2i\theta \Psi)$ is isospectral to $\Delta_\theta$ in (\ref{Del}) up to unit eigenvalues, and hence,
\begin{equation}
\label{lndet0}
    \ln\det
    (
        I_m -
        \theta
        (\Phi - i\Psi)\phi(2i\theta \Psi)
    )
    =
    \ln\det \Delta_\theta.
\end{equation}
A combination of (\ref{lndet0}) with the representation (\ref{Dexp}) of the function $D_\theta$ in (\ref{D}) and the identity $\det \re^\mu = \re^{\Tr \mu}$ for square matrices $\mu$ leads to
\begin{equation}
\label{lndetD}
        \ln\det
    D_\theta(\lambda)
     =
    \ln\det
    \Delta_\theta(\lambda)
    -i\theta \Tr \Psi(\lambda),
    \qquad
    \lambda \in \mR.
\end{equation}
Since $\Tr \Psi$ is the Fourier transform of $\Tr \Lambda$ by (\ref{Phi0_Psi0}), application of the inverse Fourier transform yields
$
    \int_\mR
    \Tr \Psi(\lambda)
    \rd \lambda
    =
    2\pi \Tr \Lambda(0) = 0
$ in view of the one-point CCR matrix $\Lambda(0) = \cC V(0)\cC^\rT = \cC \Theta \cC^\rT$ in (\ref{ZZcomm}) being traceless (as any antisymmetric matrix). The representation (\ref{UpsDel}) is now obtained by integrating both sides of (\ref{lndetD}) over $\lambda$ and recalling (\ref{Ups}).
\end{proof}

Note that the transfer function $G$ (and hence, $\Psi$ in (\ref{Phi0_Psi0})) is strictly proper, and $\phi(0)=1$,  whereby $\|\Sigma_\theta(\lambda)\| = O(1/\lambda^2)$ (for any matrix norm $\|\cdot\|$) as $\lambda \to \infty$, thus making $\Sigma_\theta$ in (\ref{Sig}) absolutely integrable:
$
  \int_\mR
  \|\Sigma_\theta(\lambda)\|
  \rd \lambda
  <
  +\infty
$. This integrability  and the uniform positive definiteness (\ref{unipos}) under the condition (\ref{spec1})  secure the convergence of the integral in (\ref{UpsDel}). Moreover, due to these properties, the QEF rate $\Ups_\theta$ is a  Frechet differentiable function of the closed-loop system matrices $\cA$, $\cB$, $\cC$. In comparison with the original formula (\ref{Ups}), the advantage of the representation (\ref{UpsDel}) is that its integrand uses Hermitian matrices, which will simplify the computation of the Frechet derivatives.

\section{First-order necessary conditions of optimality}
\label{sec:opt}

Since the closed-loop system matrix $\cA$ in (\ref{dX_cAB}) is Hurwitz for any stabilizing coherent quantum  controller (\ref{xi_eta}), then, in view of the parameterization (\ref{abce}),   the cost functional $\Ups_\theta$, defined by  (\ref{Ups}) and endowed with an equivalent representation (\ref{UpsDel}) in Lemma~\ref{lem:UpsDel},  is an infinitely differentiable  composite function
\begin{equation}
\label{comp}
    \fE:=
    \mS_\nu\x \mR^{m_2\x \nu}\x \mR^{p_1\x \nu}
    \ni
    \cE
    :=
    (R_2, M_2, L_2)
    \mapsto
    (a,b,c,e)
    \mapsto
    (\cA,\cB,\cC)
    \mapsto
    \Ups_\theta
\end{equation}
of the triple $\cE$ of the energy and coupling matrices of such a controller for any given $\theta>0$   subject to (\ref{spec1}).  Those triples $\cE$, which specify a stabilizing coherent quantum controller (\ref{xi_eta}), (\ref{abce}) satisfying (\ref{nozer}), (\ref{spec1}), form an open subset of the space $\fE$. Such controllers will be referred to as \emph{admissible} controllers.

Similarly to the variational approach to quantum control and filtering problems with mean square performance  criteria  \cite{VP_2013a,VP_2013b}, the first-order necessary conditions of optimality for the risk-sensitive  control problem (\ref{opt}) in the class of admissible controllers can be obtained by equating to zero the partial Frechet derivatives of the QEF rate $\Ups_\theta$  with respect to the energy and coupling matrices $R_2$, $M_2$, $L_2$ of the controller.
A frequency-domain representation of these derivatives is provided by Theorem~\ref{th:ders} below. For its formulation, the affine dependence of the closed-loop system matrices $\cA$, $\cB$, $\cC$ in (\ref{dX_cAB}), (\ref{Z_U_cC}) on the controller matrices $a$, $b$, $c$, $e$ in the QSDEs (\ref{xi_eta}), that is,  the second intermediate map in (\ref{comp}),  is represented (similarly to \cite[Eqs. (22), (23)]{VP_2013a}) as
\begin{equation}
\label{Gamgam}
    \Gamma
    :=
    \begin{bmatrix}
      \cA & \cB \\
      \cC & 0
    \end{bmatrix}
    =
    \Gamma_0
    +
    \Gamma_1
    \gamma
    \Gamma_2,
    \qquad
    \gamma
    :=
    \begin{bmatrix}
      a & b & e \\
      c & 0 & 0
    \end{bmatrix},
\end{equation}
where $\Gamma_0 \in \mR^{(n+\nu+ r) \x (n+\nu+m)}$, $\Gamma_1 \in \mR^{(n+\nu+ r) \x (\nu+p_2)}$,   $\Gamma_2 \in \mR^{(\nu+m_2+p_1) \x (n+\nu+m)}$ are auxiliary matrices given by
\begin{equation}
\label{GGG}
  \Gamma_0
  :=
    \begin{bmatrix}
      A & 0 & B & Ed\\
      0 & 0_\nu & 0 & 0\\
      S & 0 & 0 & 0
    \end{bmatrix},
    \qquad
  \Gamma_1
  :=
    \begin{bmatrix}
      0 & E\\
      I_\nu & 0\\
      0 & K
    \end{bmatrix},
    \qquad
  \Gamma_2
  :=
    \begin{bmatrix}
      0 & I_\nu & 0 & 0\\
      0 & 0   & 0 & I_{m_2}\\
      C & 0 & D & 0
    \end{bmatrix},
\end{equation}
with $0_s$ the $(s\x s)$-matrix of zeros (the dimensions of the other blocks are recovered from their surroundings).
In (\ref{Gamgam}), the closed-loop system and controller matrices  are assembled into the matrices $\Gamma \in \fT_{n+\nu,m,r}$ and $\gamma\in \fT_{\nu,m_2+p_1,p_2}$ in the corresponding subspaces
\begin{equation}
\label{fT}
    \fT_{s,\mu,\rho}
    :=
    \left\{
    \begin{bmatrix}
      \varphi & \sigma\\
      \tau & 0
    \end{bmatrix}:\
    \varphi \in \mR^{s\x s},\
    \sigma \in \mR^{s\x \mu},\
    \tau \in \mR^{\rho \x s}
    \right\}
\end{equation}
of the Hilbert space $\mR^{(s+\rho)\x (s+\mu)}$,
where $s$, $\mu$, $\rho$ specify the block dimensions. The operator of projection from $\mR^{(s+\rho)\x (s+\mu)}$ onto the subspace $\fT_{s,\mu,\rho}$ will be denoted as  $\daleth_{s,\mu,\rho}$. Its action
\begin{equation}
\label{daleth}
  \daleth_{s,\mu,\rho}
  \left(
    \begin{bmatrix}
      \varphi & \sigma\\
      \tau & \psi
    \end{bmatrix}
  \right)
  :=
    \begin{bmatrix}
      \varphi & \sigma\\
      \tau & 0
    \end{bmatrix}
\end{equation}
consists in padding the bottom-right $(\rho\x \mu)$-block $\psi$ of the matrix with zeros, in accordance with the sparsity structure in (\ref{fT}). The operator $\daleth_{s,\mu,\rho}$ and the target subspace $\fT_{s,\mu,\rho}$ extend to the case of complex matrices in a natural fashion.
In view of the affine dependence of the matrix $\Gamma$ on $\gamma$ in (\ref{Gamgam}), the corresponding Frechet derivative  can be  represented as
\begin{equation}
\label{dGamdgam}
  \d_{\gamma}\Gamma
  =
  \[[[
    \Gamma_1,
    \Gamma_2
  \]]].
\end{equation}
Here, $\[[[\sigma, \tau\]]]$ denotes a ``sandwich'' operator (playing a role in algebraic Sylvester equations \cite{GLAM_1992,SIG_1998}), which is specified by real or complex  matrices $\sigma$, $\tau$ and acts on an appropriately dimensioned  matrix $\alpha$ as
\begin{equation}
\label{sand}
    \[[[\sigma, \tau\]]](\alpha)
    :=
    \sigma
    \alpha
    \tau.
\end{equation}
In a similar fashion, $\[[[\sigma_1, \tau_1 \mid \ldots \mid \sigma_s, \tau_s\]]]:= \sum_{k=1}^s \[[[\sigma_k, \tau_k\]]]$ defines a ``multisandwich'' operator.
The operator $\d_{\gamma}\Gamma$ in (\ref{dGamdgam}) provides a concise representation (in fact, makes advantage of the sparsity) of the
Jacobian matrix  of the affine map $    (a,b,c,e)
    \mapsto
    (\cA,\cB,\cC)
$ in (\ref{dX_cAB}), (\ref{Z_U_cC}):
$  \d_{a,b,c,e}(\cA, \cB, \cC)
  =
  {\scriptsize\begin{bmatrix}
    \d_a\cA & 0 & \d_c\cA & \d_e\cA \\
    0 & \d_b\cB & 0 & \d_e\cB \\
    0 & 0 & \d_c\cC & 0
  \end{bmatrix}}
$,
 which is computed  as if
the controller matrices $a$, $b$, $c$, $e$ were  independent variables.
This Jacobian matrix consists of linear operators,  
and its nontrivial entries are the partial Frechet derivatives
\begin{align*}
    \d_a\cA
    & =
    \[[[
        \begin{bmatrix}
          0 \\
          I_\nu
        \end{bmatrix},
        \begin{bmatrix}
          0 & I_\nu
        \end{bmatrix}
    \]]],
    \qquad\,\, \,
    \d_c\cA =
    \[[[
        \begin{bmatrix}
          E \\
          0
        \end{bmatrix},
        \begin{bmatrix}
          0 & I_\nu
        \end{bmatrix}
    \]]],
    \qquad
    \d_e\cA =
    \[[[
        \begin{bmatrix}
          0 \\
          I_\nu
        \end{bmatrix},
        \begin{bmatrix}
          C & 0
        \end{bmatrix}
    \]]],\\
    \d_b\cB
    & =
    \[[[
        \begin{bmatrix}
          0 \\
          I_\nu
        \end{bmatrix},
        \begin{bmatrix}
          0 & I_{m_2}
        \end{bmatrix}
    \]]],
    \qquad
    \d_e\cB =
    \[[[
        \begin{bmatrix}
          0 \\
          I_\nu
        \end{bmatrix},
        \begin{bmatrix}
          D & 0
        \end{bmatrix}
    \]]],
    \qquad 
    \d_c\cC
    =
    \[[[
        K,
        \begin{bmatrix}
          0 & I_\nu
        \end{bmatrix}
    \]]],
\end{align*}
organized as sandwich operators. Theorem~\ref{th:ders} will also employ the following functions $\phi_\theta, \psi_\theta: \mR\to \mC^{r\x m}$ and $\varpi_\theta: \mR\to \mH_r$: 
\begin{align}
\label{phitheta_psitheta}
    \phi_\theta(\lambda)
    & :=
    \phi(2i\theta\Psi(\lambda)) F(i\lambda) S\Delta_\theta(\lambda)^{-1} S,
    \qquad
    \psi_\theta(\lambda)
    :=
    \varpi_\theta(\lambda)
          F(i\lambda)J,\\
\label{varpitheta}
    \varpi_\theta(\lambda)
    & :=
          \left(
          \phi
          \left(
          \begin{bmatrix}
            2i\theta\Psi(\lambda) & 0\\
            F(i\lambda) S\Delta_\theta(\lambda)^{-1} SF(i\lambda)^*& 2i\theta\Psi(\lambda)
          \end{bmatrix}
          \right)
          \right)_{21},
          \qquad
          \lambda \in \mR,
\end{align}
defined in terms of (\ref{J}), (\ref{Phi0_Psi0}), (\ref{F0_G}), (\ref{phi}), (\ref{phipos}),  (\ref{Oroot}), (\ref{Del}), with $(\cdot)_{21}$ denoting  the bottom left $( r\x  r)$-block  of the appropriately partitioned  $(2r\x 2r)$-matrix.  
Also, we will use a matrix
\begin{equation}
\label{chitheta}
    \chi_\theta
    :=
  \frac{1}{2\pi}
  \int_{\mR}
  \Re
  \fP
  \left(
    \begin{bmatrix}
      G^* \cC^\rT \\
      I_r
    \end{bmatrix}
    (\phi_\theta + 2i\theta \psi_\theta)
    \begin{bmatrix}
      \cB^\rT G^* &I_m
    \end{bmatrix}
      \right)
      \rd \lambda
      \in
      \fT_{n+\nu,m,r},
\end{equation}
where,  in accordance with (\ref{fT}),  (\ref{daleth}), the operator
\begin{equation}
\label{fP}
    \fP:= \daleth_{n+\nu,m,r}
\end{equation}
is the projection onto the subspace $\fT_{n+\nu,m,r}$ of matrices whose bottom right $(r\x m)$-block vanishes.

\begin{theorem}
\label{th:ders}
Suppose the controller (\ref{xi_eta}), (\ref{abce}) is admissible, and the closed-loop system (\ref{dX_cAB})  is driven by vacuum fields and is in the invariant Gaussian quantum state.
Then the partial Frechet derivatives of the QEF rate (\ref{Ups}) for the system with respect to the energy and coupling matrices $R_2$, $M_2$, $L_2$  of the controller can be computed as
\begin{align}
\label{dUpsdR2der}
    \d_{R_2}
    \Ups_\theta
    & =
    -2
        \bS(\Theta_2 \d_a \Ups_\theta),\\
\label{dUpsdM2der}
    \d_{M_2}
    \Ups_\theta
    & =
    2
    (
        2J_2 M_2\bA(\Theta_2 \d_a \Ups_\theta)
        +
        \d_b \Ups_\theta^\rT\Theta_2
        -
        J_2 d^\rT \d_c \Ups_\theta
    ),\\
\label{dUpsdL2der}
    \d_{L_2}
    \Ups_\theta
    & =
    2
    (
        2\wt{J}_1 L_2\bA(\Theta_2 \d_a \Ups_\theta)
        +
        \d_e \Ups_\theta^\rT\Theta_2)
\end{align}
(with $\bS(\mu):= \frac{1}{2}(\mu+\mu^\rT)$ and $\bA(\mu):= \frac{1}{2}(\mu-\mu^\rT)$ the symmetrizer and antisymmetrizer of square matrices). Here, the partial Frechet derivatives of $\Ups_\theta$ with respect to the controller matrices $a$, $b$, $c$, $e$ (as independent variables) are the blocks of the matrix 
\begin{equation}
\label{mat}
    \theta \Gamma_1^\rT \chi_\theta \Gamma_2^\rT
    =
    \begin{bmatrix}
      \d_a\Ups_\theta & \d_b\Ups_\theta & \d_e\Ups_\theta \\
      \d_c\Ups_\theta & * & *
    \end{bmatrix},
\end{equation}
where the blocks ``$*$'' are irrelevant, and the matrices $\Gamma_1$, $\Gamma_2$, $\chi_\theta$ are given by (\ref{GGG}), (\ref{chitheta}).
\end{theorem}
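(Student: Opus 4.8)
The plan is to proceed in two stages: first establish the frequency-domain formula for the partial Frechet derivatives of $\Ups_\theta$ with respect to the closed-loop matrices $\cA$, $\cB$, $\cC$ treated as free variables, encapsulated in the core matrix $\chi_\theta$ of (\ref{chitheta}); then propagate these through the chain of affine maps (\ref{comp}) using the sandwich-operator calculus of (\ref{dGamdgam})--(\ref{sand}) and the physical-realizability parameterization (\ref{abce}). For the first stage I would start from the representation (\ref{UpsDel}) of Lemma~\ref{lem:UpsDel} and differentiate $-\frac{1}{4\pi}\int_\mR \ln\det\Delta_\theta(\lambda)\,\rd\lambda$ using $\rd\ln\det\Delta_\theta = \Tr(\Delta_\theta^{-1}\rd\Delta_\theta)$ together with $\Delta_\theta = I_m - \theta\Sigma_\theta$ from (\ref{Del}) and the definition (\ref{Sig}) of $\Sigma_\theta = SF^*\phi(2i\theta\Psi)FS$. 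The variation of $\Sigma_\theta$ has two sources: the explicit factors $F$, $F^*$ via $F = \cC G\cB$ from (\ref{F0_G}), and the implicit dependence of $\Psi$ (hence of the entire function $\phi(2i\theta\Psi)$) on $F$ through $\Psi = FJF^*$ in (\ref{Phi0_Psi0}). The second, implicit contribution is where the ``divided-difference'' machinery for $\phi$ enters, and it is responsible for the block matrix inside $\phi(\cdot)$ in (\ref{varpitheta}): the Frechet derivative of the matrix function $\phi$ at a Hermitian argument is expressed via the $(2,1)$-block of $\phi$ applied to a $2\times 2$ block-triangular matrix (the standard Daleckii--Krein / block-matrix formula). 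Collecting the explicit terms yields the $\phi_\theta$ of (\ref{phitheta_psitheta}) and the implicit terms yield the $\psi_\theta$ built from $\varpi_\theta$; after using cyclic invariance of the trace and $G = (sI-\cA)^{-1}$ so that $\d_\cA G = G(\d\cA)G$, one recognises the integrand of $\chi_\theta$ with the projection $\fP = \daleth_{n+\nu,m,r}$ absorbing the sparsity of the augmented matrix $\Gamma$ (the $\cC$-variation contributes only to the first block-column, the $\cB$-variation only to the first block-row, $\cA$ to the top-left block), and the factor $\theta$ and the real-part operator come out as in (\ref{chitheta}).

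For the second stage I would invoke (\ref{Gamgam}): since $\Gamma = \Gamma_0 + \Gamma_1\gamma\Gamma_2$ depends affinely on $\gamma$, the chain rule gives $\d_\gamma\Ups_\theta = \Gamma_1^\rT(\d_\Gamma\Ups_\theta)\Gamma_2^\rT$ where $\d_\Gamma\Ups_\theta$ is exactly the core matrix $\chi_\theta$ (restricted to $\fT_{n+\nu,m,r}$, which is why the adjoint of $\d_\gamma\Gamma = \[[[\Gamma_1,\Gamma_2\]]]$ is the two-sided multiplication $\mu\mapsto\Gamma_1^\rT\mu\Gamma_2^\rT$). Reading off the block structure of $\gamma$ in (\ref{Gamgam}) identifies $\d_a\Ups_\theta$, $\d_b\Ups_\theta$, $\d_e\Ups_\theta$ with the top row and $\d_c\Ups_\theta$ with the bottom-left entry of $\theta\Gamma_1^\rT\chi_\theta\Gamma_2^\rT$, which is (\ref{mat}). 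Finally, to pass from $(a,b,c,e)$ to $(R_2,M_2,L_2)$ I would differentiate the parameterization (\ref{abce}): $a = 2\Theta_2(R_2 + M_2^\rT J_2 M_2 + L_2^\rT\wt J_1 L_2)$, $b = 2\Theta_2 M_2^\rT$, $c = 2d J_2 M_2$, $e = 2\Theta_2 L_2^\rT$. Chain-ruling through these and using $\d_{R_2}a = 2\[[[\Theta_2,I_\nu\]]]$, the product rule on the quadratic terms $M_2^\rT J_2 M_2$ and $L_2^\rT\wt J_1 L_2$ (each contributing a symmetrized/antisymmetrized pair because $J_2$, $\wt J_1$ are antisymmetric), and transposing adjoints appropriately, produces the symmetrizer $\bS$ in (\ref{dUpsdR2der}) — because $R_2$ ranges over the symmetric matrices $\mS_\nu$ — and the antisymmetrizer $\bA$ together with the various transpose terms in (\ref{dUpsdM2der})--(\ref{dUpsdL2der}); the sign $-2$ in (\ref{dUpsdR2der}) and the $2(\cdots)$ prefactors track the $2\Theta_2$ and $\Theta_2$ factors in (\ref{abce}).

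I expect the main obstacle to be the first stage, specifically the careful handling of the implicit dependence of the matrix function $\phi(2i\theta\Psi)$ on $F$ through $\Psi = FJF^*$. One must differentiate a composition of an analytic matrix function with a bilinear map, keep track of which variations act on $F$ versus $F^*$, and correctly reduce the resulting Frechet derivative of $\phi$ to the $(2,1)$-block of $\phi$ evaluated on the $2\times 2$ block-triangular matrix of (\ref{varpitheta}) — a reduction that is clean in statement but error-prone in bookkeeping, especially regarding the placement of the Hermitian square roots $S$ of (\ref{Oroot}) and the factor $\Delta_\theta^{-1}$ coming from $\Tr(\Delta_\theta^{-1}\,\rd\Delta_\theta)$. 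A secondary technical point is justifying differentiation under the integral sign in (\ref{UpsDel}) and (\ref{chitheta}): this is legitimate because of the uniform positive-definiteness (\ref{unipos}) guaranteed by (\ref{spec1}) together with the $O(1/\lambda^2)$ decay of $\Sigma_\theta$ noted after Lemma~\ref{lem:UpsDel}, so the integrands and their derivatives are dominated by integrable functions uniformly in a neighbourhood of the admissible controller; since admissibility is an open condition, the derivative formulas hold throughout. The remaining steps — the affine chain rule through $\Gamma$ and the differentiation of the quadratic parameterization (\ref{abce}) — are routine applications of the sandwich calculus and the product rule, and the appearance of $\bS$ and $\bA$ is dictated by the respective domains $\mS_\nu$ and $\mR^{m_2\times\nu}$, $\mR^{p_1\times\nu}$ of the controller parameters.
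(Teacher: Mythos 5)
Your proposal is correct and follows essentially the same route as the paper's proof: differentiating the $\ln\det\Delta_\theta$ representation of Lemma~\ref{lem:UpsDel}, splitting $\delta\Sigma_\theta$ into explicit $\delta F$ terms and the implicit $\delta\phi(2i\theta\Psi)$ term handled via the $(2,1)$-block formula of Lemma~\ref{lem:fmat'} (yielding $\phi_\theta$, $\psi_\theta$, $\varpi_\theta$ and the core matrix $\chi_\theta$ through the resolvent identity and the projection $\fP$), then chain-ruling through the affine map $\Gamma=\Gamma_0+\Gamma_1\gamma\Gamma_2$ and the quadratic PR parameterization (\ref{abce}) to produce the symmetrizer and antisymmetrizer. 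The only cosmetic imprecision is the phrase identifying $\d_\Gamma\Ups_\theta$ with $\chi_\theta$ rather than $\theta\chi_\theta$, but you track the factor $\theta$ correctly in (\ref{mat}), so nothing is affected.
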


\begin{proof}
In view of the relation $\delta \ln\det \mu = \bra \mu^{-1}, \delta\mu\ket$ for positive definite Hermitian  matrices $\mu$ (with $\bra \alpha, \beta\ket:= \Tr(\alpha^* \beta)$ the Frobenius inner product \cite{HJ_2007} of identically dimensioned real or complex matrices $\alpha$, $\beta$),  it follows from (\ref{UpsDel}) that, for a given $\theta$, the first variation of the QEF rate with respect to the admissible controller matrices can be represented as
\begin{equation}
\label{dUps}
  \delta \Ups_\theta
  =
  \frac{\theta}{4\pi}
  \int_{\mR}
  \bra
    \Delta_\theta(\lambda)^{-1},
    \delta \Sigma_\theta(\lambda)
  \ket
  \rd \lambda,
\end{equation}
where use is also made of $\delta \Delta_\theta = -\theta \delta \Sigma_\theta$ due to (\ref{Del}).
For a fixed but otherwise arbitrary frequency $\lambda\in \mR$,   the first variation of the matrix $\Sigma_\theta(\lambda)$ in (\ref{Sig}) is computed as
\begin{equation}
\label{dSig}
    \delta \Sigma_\theta
     =
    S(
    F^* \phi(2i\theta \Psi) \delta F
    +
    (\delta F^*) \phi(2i\theta \Psi) F
    +
    F^* (\delta\phi(2i\theta \Psi)) F
    ) S,
\end{equation}
since the matrix $S$ in (\ref{Oroot}) is constant.
By substituting (\ref{dSig}) into (\ref{dUps}), the integrand  takes the form
\begin{align}
\nonumber
  \bra
    \Delta_\theta^{-1},
    \delta \Sigma_\theta
  \ket
  &=
  \bra
    S \Delta_\theta^{-1} S,
    F^* \phi(2i\theta \Psi) \delta F
    +
    (\delta F^*) \phi(2i\theta \Psi) F
    +
    F^* (\delta\phi(2i\theta \Psi)) F
  \ket  \\
\nonumber
  & =
  \bra
    S \Delta_\theta^{-1} S,
    F^* \phi(2i\theta \Psi) \delta F
    +
    (F^* \phi(2i\theta \Psi) \delta F)^*
  \ket
    +
  \bra
    S \Delta_\theta^{-1} S,
    F^* (\delta\phi(2i\theta \Psi)) F
  \ket  \\
\nonumber
  & =
  2\Re
  \bra
    S \Delta_\theta^{-1} S,
    F^* \phi(2i\theta \Psi) \delta F
  \ket
    +
  \bra
    F S \Delta_\theta^{-1} S F^*,
    \delta\phi(2i\theta \Psi)
  \ket  \\
\nonumber
  & =
  2\Re
    \bra
        \phi(2i\theta \Psi) F
        S \Delta_\theta^{-1} S,
        \delta F
    \ket
     +
    \bra
        F S \Delta_\theta^{-1} S F^*,
        \delta\phi(2i\theta \Psi)
  \ket  \\
\label{braket}
    & =
    2\Re
    \bra
        \phi_\theta,
        \delta F
    \ket
    +
    \bra
        F S \Delta_\theta^{-1} S F^*,
        \delta\phi(2i\theta \Psi)
  \ket,
\end{align}
where we have used (\ref{phitheta_psitheta}) and the Hermitian property of the matrices $S$, $\Delta_\theta$, $\phi(2i\theta \Psi)$. Here, use is also made of the identity $\bra \alpha, \sigma \beta\tau\ket = \bra \sigma^* \alpha \tau^*, \beta\ket $ for appropriately dimensioned complex matrices $\alpha$, $\beta$, $\sigma$, $\tau$ (that is, the representation
\begin{equation}
\label{sandadj}
    \[[[\sigma, \tau\]]]^\dagger = \[[[\sigma^*, \tau^*\]]]
\end{equation}
for the adjoint of the sandwich operator (\ref{sand})),   along with $\bra \alpha, \beta + \beta^*\ket = 2\Re \bra \alpha, \beta\ket$ in the case when $\alpha$ is Hermitian. Application of (\ref{fab0}) from Lemma~\ref{lem:fmat'} (see Appendix~\ref{sec:matfun} and references therein)
to the function $\phi$ in (\ref{phi}) on the Hilbert space $\mH_r$ (with the Frobenius inner product $\bra\cdot, \cdot\ket$) leads to
\begin{equation}
\label{dbraket}
    \bra
        F S \Delta_\theta^{-1} S F^*,
        \delta\phi(2i\theta \Psi)
    \ket
    =
    2i\theta
    \bra
        \d_\alpha
        \phi(\alpha)|_{\alpha=2i\theta\Psi}
        (F S \Delta_\theta^{-1} S F^*),
        \delta\Psi
    \ket
    =
    2i\theta
    \bra
        \varpi_\theta,
        \delta\Psi
    \ket,
\end{equation}
where the Frechet derivative $\d_\alpha\phi(\alpha)$ of $\phi$ is a linear operator acting on the space $\mH_r$ as
\begin{equation}
\label{dphi}
    \d_\alpha
    \phi(\alpha)(\beta)
     =
    \left(
    \phi
    \left(
    \begin{bmatrix}
      \alpha & 0\\
      \beta & \alpha
    \end{bmatrix}
    \right)
    \right)_{21}
    \in \mH_r,
    \qquad
    \alpha, \beta \in \mH_r,
\end{equation}
with $(\cdot)_{21}$ denoting the bottom left $( r\x  r)$-block  of the $(2r\x 2r)$-matrix.  In (\ref{dbraket}), the relation (\ref{dphi}) is applied to the matrices $\alpha:= 2i\theta \Psi$ and $\beta:=F S \Delta_\theta^{-1} S F^*$ (which are both in $\mH_r$), thereby leading to the matrix $\varpi_\theta$ in (\ref{varpitheta}).  The first variation of $\Psi$ in (\ref{Phi0_Psi0}) is computed as
$    \delta \Psi
    =
    (\delta F) J F^* + FJ\delta F^*
$, and its substitution into (\ref{dbraket}) yields
\begin{align}
\nonumber
    \bra
        F S \Delta_\theta^{-1} S F^*,
        \delta\phi(2i\theta \Psi)
    \ket
    & =
    2i\theta
    \bra
        \varpi_\theta,
        (\delta F) J F^* + FJ\delta F^*
    \ket\\
\nonumber
    & =
    2i\theta
    \bra
        \varpi_\theta,
        (\delta F) J F^* -((\delta F) J F^*)^*
    \ket
    =
    -4 \theta
    \Im
    \bra
        \varpi_\theta,
        (\delta F) J F^*
    \ket\\
\label{dbraket1}
    &
    =
    4 \theta
    \Im
    \bra
        \varpi_\theta F J,
        \delta F
    \ket
    =
    4\theta
    \Im
    \bra
        \psi_\theta,
        \delta F
    \ket
    =
    4\theta
    \Re
    \bra
        i\psi_\theta,
        \delta F
    \ket,
\end{align}
where use is made of (\ref{phitheta_psitheta}), (\ref{varpitheta}),  the antisymmetry of the matrix $J$ in (\ref{J}),  and the matrix identity $\bra \alpha, \beta - \beta^*\ket = 2i\Im \bra \alpha, \beta\ket$ with $\alpha$ Hermitian.  In turn, the first variation of the transfer matrix $F$ in (\ref{F0_G}) (as a function of the matrices  $\cA$, $\cB$, $\cC$) is represented as
\begin{equation}
\label{dF}
  \delta F
  =
  \cC G (\delta \cA) G \cB + \cC G \delta \cB + (\delta \cC) G \cB
  =
    \begin{bmatrix}
      \cC G & I_r
    \end{bmatrix}
    (\delta \Gamma )
    \begin{bmatrix}
      G \cB \\
      I_m
    \end{bmatrix},
\end{equation}
due to the
relation $\delta G = G(\delta \cA)G$,  which is obtained from the second equality in  (\ref{F0_G})  by using the identity $\delta (\mu^{-1}) = -\mu^{-1} (\delta \mu) \mu^{-1}$ for nonsingular matrices $\mu$. Here,
\begin{equation}
\label{dABC}
    \delta \Gamma
    =
  \begin{bmatrix}
    \delta\cA & \delta\cB\\
    \delta\cC & 0
  \end{bmatrix},
\end{equation}
in accordance with the first equality in (\ref{Gamgam}). 
By combining (\ref{braket}) with  (\ref{dbraket1}) and using (\ref{dF}), it follows that
\begin{align}
\nonumber
  \bra
    \Delta_\theta^{-1},
    \delta \Sigma_\theta
  \ket
    & =
    2\Re
    \bra
        \phi_\theta,
        \delta F
    \ket
    +
    4\theta
    \Re
    \bra
        i\psi_\theta,
        \delta F
    \ket
    =
    2
    \Re
    \bra
        \phi_\theta + 2i\theta \psi_\theta,
        \delta F
    \ket    \\
\nonumber
    & =
    2\Re
    \Bra
    \begin{bmatrix}
      G^* \cC^\rT \\
      I_r
    \end{bmatrix}
    (\phi_\theta + 2i\theta \psi_\theta)
    \begin{bmatrix}
      \cB^\rT G^* &I_m
    \end{bmatrix},
    \delta\Gamma
    \Ket\\
\label{dbraket2}
    & =
    2
    \Bra
    \Re
    \fP
    \left(
    \begin{bmatrix}
      G^* \cC^\rT \\
      I_r
    \end{bmatrix}
    (\phi_\theta + 2i\theta \psi_\theta)
    \begin{bmatrix}
      \cB^\rT G^* &I_m
    \end{bmatrix}
    \right),
    \delta\Gamma
    \Ket,
\end{align}
where $\fP$ is the operator (\ref{fP}), 
and the matrix identity $\Re \bra \alpha, \beta\ket= \bra \Re \alpha, \beta\ket $ with $\beta$ real is applied to $\beta := \delta\Gamma$. Since $\delta \Gamma$ in (\ref{dABC}) does not depend on the frequency $\lambda$, then by integrating both sides of (\ref{dbraket2}) over $\lambda \in \mR$ and using (\ref{chitheta}), the  first variation (\ref{dUps}) acquires the form
\begin{equation}
\label{dUps3}
  \delta \Ups_\theta
  =
  \theta
  \bra
    \chi_\theta,
    \delta\Gamma
  \ket,
\end{equation}
which yields the Frechet derivative of the QEF rate with respect to the matrix $\Gamma \in \fT_{n+\nu,m,r}$ in (\ref{Gamgam}):
\begin{equation}
\label{dUpsdABC}
    \d_\Gamma\Ups_\theta
    =
    \begin{bmatrix}
      \d_{\cA} \Ups_\theta & \d_{\cB} \Ups_\theta\\
        \d_{\cC} \Ups_\theta & 0
    \end{bmatrix}
    =
    \theta \chi_\theta.
\end{equation}
From the affine dependence of $\Gamma$ on $\gamma$ in (\ref{Gamgam}),  which describes the second map in (\ref{comp}), it follows that
\begin{equation}
\label{dABC1}
    \delta \Gamma
  =
  \Gamma_1
  (\delta \gamma)
  \Gamma_2,
  \qquad
  \delta \gamma
    =
    \begin{bmatrix}
      \delta a & \delta b & \delta e \\
      \delta c & 0 & 0
    \end{bmatrix},
\end{equation}
in accordance with (\ref{dGamdgam}), with the matrices $\Gamma_1$, $\Gamma_2$ given by  (\ref{GGG}). Therefore, by an operator version of the chain rule for differentiating composite functions,
substitution of the  first equality from (\ref{dABC1}) into (\ref{dUps3}) yields
$  \delta \Ups_\theta
  =
  \theta
  \bra
    \Gamma_1^\rT \chi_\theta \Gamma_2^\rT,
    \delta\gamma
  \ket
  =
  \theta
  \bra
    \daleth_{\nu,m_2+p_1,p_2}(\Gamma_1^\rT \chi_\theta \Gamma_2^\rT),
    \delta\gamma
  \ket
$, 
whereby
\begin{equation}
\label{dUpsdgamma}
    \d_\gamma\Ups_\theta
    =
    \theta \daleth_{\nu,m_2+p_1,p_2}
    (\Gamma_1^\rT \chi_\theta \Gamma_2^\rT)
\end{equation}
is the Frechet derivative of $\Ups_\theta$ with respect to the matrix $\gamma \in \fT_{\nu,m_2+p_1,p_2}$ in (\ref{Gamgam}). The relation (\ref{dUpsdgamma}) allows the partial Frechet derivatives of $\Ups_\theta$ with respect to the controller matrices $a$, $b$, $c$, $e$ (as if they were  independent variables)   to be  recovered as the corresponding blocks 
of the matrix (\ref{mat}). 
It now remains to take into account the first map in (\ref{comp}) specified by (\ref{abce}). The latter leads to the first variations of the QEF rate $\Ups_\theta$ with respect to the energy and coupling matrices $R_2$, $M_2$, $L_2$ of the controller:
\begin{align}
\label{dUpsdR2}
    \delta_{R_2}
    \Ups_\theta
    & =
    \bra
        \d_a \Ups_\theta,
        2\Theta_2 \delta R_2
    \ket
    =
    -2
    \bra
        \Theta_2 \d_a \Ups_\theta,
        \delta R_2
    \ket
    =
    -2
    \bra
        \bS(\Theta_2 \d_a \Ups_\theta),
        \delta R_2
    \ket,\\
\nonumber
    \delta_{M_2}
    \Ups_\theta
    & =
    \bra
        \d_a \Ups_\theta,
        2\Theta_2 (M_2^\rT J_2 \delta M_2 + (\delta M_2^\rT) J_2 M_2)
    \ket
    +
    \bra
        \d_b \Ups_\theta,
        2\Theta_2 \delta M_2^\rT
    \ket
    +
    \bra
        \d_c \Ups_\theta,
        2d J_2 \delta M_2
    \ket    \\
\nonumber
    & =
    -2
    \bra
        \bA(\Theta_2 \d_a \Ups_\theta),
        M_2^\rT J_2 \delta M_2 -(M_2^\rT J_2 \delta M_2)^\rT
    \ket
    -
    2
    \bra
        \d_b \Ups_\theta^\rT,
        \delta M_2 \Theta_2
    \ket
    -
    2
    \bra
        J_2 d^\rT \d_c \Ups_\theta,
        \delta M_2
    \ket    \\
\label{dUpsdM2}
    & =
    2
    \bra
        2J_2 M_2\bA(\Theta_2 \d_a \Ups_\theta)
        +
        \d_b \Ups_\theta^\rT\Theta_2
        -
        J_2 d^\rT \d_c \Ups_\theta,
        \delta M_2
    \ket,\\
\nonumber
    \delta_{L_2}
    \Ups_\theta
    & =
    \bra
        \d_a \Ups_\theta,
        2\Theta_2 (L_2^\rT \wt{J}_1 \delta L_2 + (\delta L_2^\rT) \wt{J}_1 L_2)
    \ket+
    \bra
        \d_e \Ups_\theta,
        2\Theta_2 \delta L_2^\rT
    \ket\\
\label{dUpsdL2}
    & =
    2
    \bra
        2\wt{J}_1 L_2\bA(\Theta_2 \d_a \Ups_\theta)
        +
        \d_e \Ups_\theta^\rT\Theta_2,
        \delta L_2
    \ket,
\end{align}
where use is also made of the symmetry of $R_2$ and antisymmetry of the CCR matrices $\Theta_2$, $J_2$, $\wt{J}_1$ along with the orthogonality of the subspaces of real symmetric and real antisymmetric matrices. By (\ref{dUpsdR2})--(\ref{dUpsdL2}), the corresponding partial Frechet derivatives take  the form (\ref{dUpsdR2der})--(\ref{dUpsdL2der}).
\end{proof}

The proof of Theorem~\ref{th:ders} shows that the specific form (\ref{Ups}) (or  (\ref{UpsDel})) of the cost functional $\Ups_\theta$ enters its partial Frechet derivatives (\ref{dUpsdR2der})--(\ref{mat}) only through the  matrix $\chi_\theta$ in (\ref{chitheta}). Indeed, the maps $    \cE
    \mapsto
    (a,b,c,e)
    \mapsto
    (\cA,\cB,\cC)
$  in (\ref{comp}) remain unchanged for different cost functionals using the same criterion process (\ref{Z_U_cC}). For this reason, the matrix $\chi_\theta$, which, in view of (\ref{dUpsdABC}),  captures the Frechet derivatives of the  function
$    (\cA,\cB,\cC)
    \mapsto
    \Ups_\theta
$,   will be referred to as the \emph{core matrix} for the QEF rate.

In the limit, as $\theta \to 0+$, the  functions (\ref{Del}),  (\ref{phitheta_psitheta}) reduce to $\Delta_0 = I_m$, $\phi_0 = F\Omega^\rT$, $\psi_0= \frac{1}{2}F\Omega^\rT F^* FJ$, whose substitution into (\ref{chitheta}) at $\theta=0$ yields the matrix
\begin{equation}
 \label{chi0}
    \chi_0
    =
  \frac{1}{2\pi}
  \int_{\mR}
  \Re
  \fP
    \left(
    \begin{bmatrix}
      G^* \cC^\rT \\
      I_r
    \end{bmatrix}
    F\Omega^\rT
    \begin{bmatrix}
      \cB^\rT G^* &I_m
    \end{bmatrix}
      \right)
      \rd \lambda
 \end{equation}
 (note that $S^2 = \Omega^\rT$ in view of (\ref{Oroot})). Similarly to (\ref{dUpsdABC}), the matrix $\chi_0$ can be represented as
\begin{equation}
\label{PQH}
    \chi_0
    =
    \d_\Gamma
    \Ups_*
    =
    \begin{bmatrix}
      \d_{\cA} \Ups_* & \d_{\cB} \Ups_*\\
        \d_{\cC} \Ups_* & 0
    \end{bmatrix}
    =
    \begin{bmatrix}
      \cH & \cQ \cB\\
      \cC \cP  & 0
    \end{bmatrix}
\end{equation}
and plays the role of a core matrix
associated with the following mean square cost for the same  criterion process (\ref{Z_U_cC}) of the closed-loop system in the invariant Gaussian state:
\begin{equation}
\label{Ups*}
    \Ups_*
    :=
    \frac{1}{2}
    \bE (Z(0)^\rT Z(0))
    =
    \lim_{\theta\to 0+}
    \frac{\Ups_\theta}{\theta}
    =
    \frac{1}{2T} \bE Q_T,
\end{equation}
which holds for any $T>0$ (in the invariant state) and is the growth rate for the right-hand side of (\ref{Xilim}).
The last equality in (\ref{PQH}) employs
the Hankelian $\cH:= \cQ\cP$ of the system (see, for example,  \cite[Lemma~2]{VP_2013a} or \cite[Lemma 7 of Appendix B]{VP_2010}) associated with the  controllability Gramian $\cP$ from (\ref{cP}) and the observability Gramian $\cQ = \bL_{\cA^\rT}(\Pi)$ of the pair $(\cA, \cC)$ satisfying the ALE $\cA^\rT \cQ + \cQ \cA + \Pi  = 0$, where
use is made of an auxiliary matrix
\begin{equation}
\label{CC}
  \Pi
  :=
  \cC^\rT \cC
  \in
  \mS_{n+\nu}^+.
\end{equation}
The right-hand side of (\ref{PQH}) can also be obtained directly by evaluating (\ref{chi0}) through the Plancherel theorem applied to the transfer functions (\ref{F0_G}) and their complex conjugate transpose.

We will now return to the general case of $\theta>0$.  In combination with (\ref{GGG}), (\ref{phitheta_psitheta})--(\ref{chitheta}),  (\ref{mat}) (with the closed-loop system matrices $\cA$, $\cB$, $\cC$ being computed according to (\ref{dX_cAB}), (\ref{Z_U_cC}) and the parameterization (\ref{abce}) of the controller matrices), the equations
\begin{align}
\label{R2opt}
        \bS(\Theta_2 \d_a \Ups_\theta)
        & = 0,\\
\label{M2opt}
        2J_2 M_2\bA(\Theta_2 \d_a \Ups_\theta)
        +
        \d_b \Ups_\theta^\rT\Theta_2
        -
        J_2 d^\rT \d_c \Ups_\theta
        & = 0, \\
\label{L2opt}
        2\wt{J}_1 L_2\bA(\Theta_2 \d_a \Ups_\theta)
        +
        \d_e \Ups_\theta^\rT\Theta_2
        & = 0,
\end{align}
obtained by equating the partial Frechet derivatives (\ref{dUpsdR2der})--(\ref{dUpsdL2der}) to zero, provide first-order necessary conditions of optimality in frequency domain for the quantum risk-sensitive control problem (\ref{opt}) in the class of admissible coherent quantum controllers (\ref{xi_eta}), (\ref{abce}) for the quantum plant (\ref{x_y}).
The equations (\ref{R2opt})--(\ref{L2opt}) can be solved numerically by using the Frechet derivatives for a gradient descent iterative algorithm in the space $\fE$  of the matrix triples $\cE$ from (\ref{comp}), similarly to \cite{SVP_2017} for the coherent quantum LQG (CQLQG) control problem $\Ups_* \to \inf$ with the mean square cost (\ref{Ups*}). Such an algorithm can be initialized, for example, at a solution of the CQLQG control problem, which involves the corresponding core matrix $\chi_0$ from (\ref{chi0}), (\ref{PQH})  and can also be obtained by using the homotopy and discounting ideas \cite{VP_2021}. At every step,  the gradient descent requires the evaluation of the core matrix $\chi_\theta$ in (\ref{chitheta}), which is the crucial part of this approach. To this end, Section~\ref{sec:statespace} will outline a state-space computation of $\chi_\theta$ using spectral factorizations and an  infinite cascade of classical linear systems. In addition to the role for numerical optimization, the general structure of the core matrix $\chi_\theta$ in (\ref{chitheta}) (and its limiting case $\chi_0$ in   (\ref{chi0})) suggests a link between the risk-sensitive and CQLQG control problems,  which is discussed in the next section.

\section{Infinitesimal equivalence to a weighted CQLQG control problem}
\label{sec:infeq}

From (\ref{phitheta_psitheta}), (\ref{varpitheta}),  it follows that, at any given frequency $\lambda \in \mR$, the intermediate factor  $\phi_\theta + 2i\theta \psi_\theta$ of the integrand in the core matrix (\ref{chitheta}) can be represented as  the image
\begin{equation}
\label{fMF}
    \phi_\theta(\lambda) + 2i\theta \psi_\theta(\lambda)
     =
     \fM_{\theta,F}(F(i\lambda))
\end{equation}
of the closed-loop system transfer matrix $F(i\lambda)$ in (\ref{F0_G}) under a multisandwich operator on $\mC^{r\x m}$:
\begin{equation}
\label{fM}
    \fM_{\theta, F}
    :=
    \[[[
        \phi(2i\theta\Psi(\lambda)), S\Delta_\theta(\lambda)^{-1} S
        \mid
        \varpi_\theta(\lambda), 2i\theta J
    \]]] ,
\end{equation}
whose dependence on $\lambda$ is omitted for brevity. In view of (\ref{sandadj}), the operator $\fM_{\theta,F}$ is self-adjoint since all the four matrices, which specify it, are Hermitian. Note that the operator $\fM_{\theta,F}$ itself depends on $F$ through $\Psi$, $\Delta_\theta$, $\varpi_\theta$.

Now,  consider a generalized version of the CQLQG control problem with a weight\-ed quadratic cost functional
\begin{equation}
\label{V}
    V
    :=
    \frac{1}{4\pi}
    \int_{\mR}
    \bra
        F,
        \fM(F)
    \ket
    \rd \lambda
    \to \inf
\end{equation}
for the same criterion process (\ref{Z_U_cC}).
Here, 
$\fM$ is a given frequency-dependent self-adjoint operator on the Hilbert space $\mC^{r\x m}$ (with the Frobenius inner product). In order to guarantee the convergence of the integral in (\ref{V}) for strictly proper stable transfer functions $F$, we assume that the  operator norm of $\fM$ is bounded uniformly  over the frequency:
\begin{equation}
\label{fMnorm}
    \sn \fM \sn
    :=
    \sup_{\lambda \in \mR}
    \|\fM\| < +\infty,
\end{equation}
so that
$
    |V| \<
    \frac{1}{2}
    \sn \fM \sn
    \|F\|_2^2
    < +\infty
$,
where $\|F\|_2 := \sqrt{\frac{1}{2\pi} \int_\mR \|F(i\lambda)\|_\rF^2\rd \lambda}$ is the norm of $F$ in the Hardy space $\cH_2$ defined in terms of the Frobenius norm $\|\cdot\|_\rF:= \sqrt{\bra \cdot, \cdot\ket}$ of matrices.

\begin{lemma}
\label{lem:core}
For a given frequency-dependent self-adjoint  operator $\fM$ on $\mC^{r\x m}$  satisfying (\ref{fMnorm}),   the weighted CQLQG control problem (\ref{V}) over stabilizing coherent quantum controllers (\ref{xi_eta}), (\ref{abce}) has the core matrix
\begin{equation}
\label{chi}
    \chi
    :=
    \d_\Gamma
    V
    =
    \begin{bmatrix}
      \d_{\cA} V & \d_{\cB} V\\
        \d_{\cC} V & 0
    \end{bmatrix}
    =
  \frac{1}{2\pi}
  \int_{\mR}
  \Re
  \fP
  \left(
    \begin{bmatrix}
      G^* \cC^\rT \\
      I_r
    \end{bmatrix}
    \fM(F)
    \begin{bmatrix}
      \cB^\rT G^* &I_m
    \end{bmatrix}
      \right)
      \rd \lambda,
\end{equation}
where $\fP$ is the projection operator in (\ref{fP}), and use is made of the closed-loop system matrices $\cA$, $\cB$, $\cC$  from (\ref{dX_cAB}), (\ref{Z_U_cC}) assembled into the matrix $\Gamma$ in (\ref{Gamgam}),  along with the transfer function $G$ from (\ref{F0_G}).
\end{lemma}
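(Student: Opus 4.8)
The plan is to mirror, almost verbatim, the variational computation carried out in the proof of Theorem~\ref{th:ders}, but starting from the cost functional $V$ in (\ref{V}) rather than from $\Ups_\theta$ in (\ref{UpsDel}). First I would write the first variation of $V$ with respect to the closed-loop matrices: since $\fM$ is a fixed (frequency-dependent) self-adjoint operator and $V = \frac{1}{4\pi}\int_\mR \bra F, \fM(F)\ket \rd\lambda$ is a quadratic form in $F$, the self-adjointness of $\fM$ gives $\delta \bra F, \fM(F)\ket = \bra \delta F, \fM(F)\ket + \bra F, \fM(\delta F)\ket = 2\Re\bra \fM(F), \delta F\ket$, so that
\begin{equation*}
    \delta V = \frac{1}{2\pi}\int_\mR \Re \bra \fM(F), \delta F\ket \rd\lambda .
\end{equation*}
This is the exact analogue of the point in the proof of Theorem~\ref{th:ders} where the integrand $\bra \Delta_\theta^{-1}, \delta\Sigma_\theta\ket$ is reduced to $2\Re\bra \phi_\theta + 2i\theta\psi_\theta, \delta F\ket$; in fact, comparing with (\ref{fMF}), the role of $\phi_\theta + 2i\theta\psi_\theta$ is now played by $\fM(F)$, which is precisely the substitution that makes the two computations coincide.

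Next I would substitute the first variation of the transfer function $F$ in terms of $\delta\Gamma$, reusing (\ref{dF}) verbatim:
\begin{equation*}
    \delta F = \begin{bmatrix} \cC G & I_r \end{bmatrix}(\delta\Gamma)\begin{bmatrix} G\cB \\ I_m \end{bmatrix},
\end{equation*}
which follows from $\delta G = G(\delta\cA)G$ and the block structure (\ref{dABC}) of $\delta\Gamma$. Plugging this into $\delta V$ and using the adjoint identity $\bra\alpha,\sigma\beta\tau\ket = \bra\sigma^*\alpha\tau^*,\beta\ket$ (the sandwich-operator adjoint (\ref{sandadj})), together with $\Re\bra\alpha,\beta\ket = \bra\Re\alpha,\beta\ket$ when $\beta$ is real and the fact that $\delta\Gamma$ lies in the subspace $\fT_{n+\nu,m,r}$ so that only the $\fP$-projection of the kernel survives, one obtains
\begin{equation*}
    \delta V = \Bra \frac{1}{2\pi}\int_\mR \Re \fP\left(\begin{bmatrix} G^*\cC^\rT \\ I_r \end{bmatrix}\fM(F)\begin{bmatrix} \cB^\rT G^* & I_m \end{bmatrix}\right)\rd\lambda,\ \delta\Gamma \Ket,
\end{equation*}
exactly as in the derivation of (\ref{dbraket2})--(\ref{dUps3}). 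Reading off the Frechet derivative with respect to $\Gamma$ and noting that $\Gamma$ ranges over $\fT_{n+\nu,m,r}$, this is precisely the claimed formula (\ref{chi}) for $\chi = \d_\Gamma V$, with the block decomposition into $\d_\cA V$, $\d_\cB V$, $\d_\cC V$ following from the block form of $\delta\Gamma$ as in (\ref{dUpsdABC}).

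The only genuinely new point, compared with Theorem~\ref{th:ders}, is the justification that $V$ is indeed Frechet differentiable and that differentiation under the integral sign is legitimate. Here the hypothesis (\ref{fMnorm}), $\sn\fM\sn < +\infty$, does the work: it gives the uniform bound $|\bra F,\fM(F)\ket| \< \sn\fM\sn \|F(i\lambda)\|_\rF^2$ and, since $F$ is strictly proper and stable, $\|F(i\lambda)\|_\rF^2 = O(1/\lambda^2)$, so the integrand and its derivative in $\Gamma$ are dominated by an integrable function uniformly on a neighbourhood of any admissible $\Gamma$; this legitimizes the interchange and secures convergence of the integral defining $\chi$. I do not expect any real obstacle beyond bookkeeping: the chain of identities is the same one already validated in the proof of Theorem~\ref{th:ders}, and the self-adjointness of $\fM$ is the single structural fact that makes the $2\Re$ collapse go through cleanly.
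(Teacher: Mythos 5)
Your proposal is correct and follows essentially the same route as the paper's proof: the self-adjointness of $\fM$ collapses the first variation to $2\Re\bra\fM(F),\delta F\ket$, and substituting (\ref{dF}) with the sandwich-adjoint identity and the projection $\fP$ yields $\delta V=\bra\chi,\delta\Gamma\ket$ exactly as in the derivation of (\ref{dbraket2})--(\ref{dUps3}). Your added remark on dominated convergence under the hypothesis (\ref{fMnorm}) is a harmless (and sound) supplement to what the paper leaves implicit.
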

\begin{proof}
Since the operator $\fM$ itself does not depend on the transfer function $F$, then at any frequency $\lambda\in \mR$, the first variation of the integrand in (\ref{V}) with respect to $F$ is computed as
\begin{equation}
\label{dFMF}
    \delta
    \bra
        F,
        \fM(F)
    \ket
     =
    \bra
        F,
        \fM(\delta F)
    \ket
    +
    \bra
        \delta F,
        \fM(F)
    \ket
    =
    \bra
        \fM(F),
        \delta F
    \ket
    +
    \overline{
    \bra
        \fM(F),
        \delta F
    \ket}
    =
    2\Re
    \bra
        \fM(F),
        \delta F
    \ket
\end{equation}
due to the self-adjointness of $\fM$. By (\ref{dFMF}), the first variation of the cost functional $V$ takes the form
\begin{equation}
\label{dV}
    \delta V
    =
    \frac{1}{4\pi}
    \int_\mR
    \delta
    \bra
        F,
        \fM(F)
    \ket
    \rd \lambda
    =
    \frac{1}{2\pi}
    \int_\mR
    \Re
    \bra
        \fM(F),
        \delta F
    \ket
    \rd \lambda.
\end{equation}
Similarly to (\ref{dbraket2}), substitution of (\ref{dF}), (\ref{dABC}) into the last integrand in (\ref{dV}) leads to
$
    \Re
    \bra
        \fM(F),
        \delta F
    \ket
    =
    \Bra
    \Re
    \fP
    \left(
    \begin{bmatrix}
      G^* \cC^\rT \\
      I_r
    \end{bmatrix}
    \fM(F)
    \begin{bmatrix}
      \cB^\rT G^* &I_m
    \end{bmatrix}
    \right),
    \delta\Gamma
    \Ket
$.
Hence, $\delta V = \bra \chi, \delta \Gamma\ket$, with the matrix $\chi$ in (\ref{chi}) indeed being the core matrix associated with the cost functional $V$.
\end{proof}

The following theorem, which is a corollary of Lemma~\ref{lem:core},  establishes a connection between the coherent quantum risk-sensitive  control problem (\ref{opt}) and the weighted  CQLQG control problem (\ref{V}) at the level of the first-order necessary conditions of optimality.

\begin{theorem}
\label{th:infeq}
For a given $\theta>0$, suppose $F_*$ is the transfer function of the closed-loop system with an admissible controller (\ref{xi_eta}), (\ref{abce}), and the operator
\begin{equation}
\label{fM*}
    \fM:= \fM_{\theta,F_*}
\end{equation}
is associated with $F_*$ as in (\ref{fM}) and fixed. Then the controller is a stationary point of the  coherent quantum risk-sensitive  control problem (\ref{opt}) in the sense of (\ref{R2opt})--(\ref{L2opt}) if and only if it is so for the weighted  CQLQG control problem (\ref{V}) with the weighting operator (\ref{fM*}).
\end{theorem}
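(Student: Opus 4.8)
The plan is to derive the first-order optimality conditions of the weighted CQLQG problem (\ref{V}) with the frozen weighting operator (\ref{fM*}) in exactly the way Theorem~\ref{th:ders} produces (\ref{R2opt})--(\ref{L2opt}), and then to observe that the two systems of conditions coincide because the two core matrices agree at the controller in question. First I would note that the intermediate maps $\cE \mapsto (a,b,c,e) \mapsto (\cA,\cB,\cC)$ in (\ref{comp}) --- equivalently the passage $\gamma \mapsto \Gamma = \Gamma_0 + \Gamma_1\gamma\Gamma_2$ of (\ref{Gamgam}) composed with the parameterization (\ref{abce}) --- are common to both problems, which differ only in the final scalar-valued map of $(\cA,\cB,\cC)$. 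By Lemma~\ref{lem:core} (applicable since the operator $\fM_{\theta,F_*}$ in (\ref{fM}) is bounded uniformly in $\lambda$, because $\Psi$, $\Delta_\theta^{-1}$, $\varpi_\theta$ are, so that (\ref{fMnorm}) holds), the core matrix of (\ref{V}) with $\fM := \fM_{\theta,F_*}$ is $\chi = \d_\Gamma V$ of (\ref{chi}); and repeating verbatim the chain-rule computation (\ref{dUpsdABC})--(\ref{dUpsdL2der}) of the proof of Theorem~\ref{th:ders}, with $\d_\Gamma\Ups_\theta = \theta\chi_\theta$ replaced throughout by $\d_\Gamma V = \chi$, shows that the partial Frechet derivatives $\d_{R_2}V,\d_{M_2}V,\d_{L_2}V$ are given by formulas identical to (\ref{dUpsdR2der})--(\ref{dUpsdL2der}) with $\d_a\Ups_\theta,\d_b\Ups_\theta,\d_c\Ups_\theta,\d_e\Ups_\theta$ replaced by the blocks $\d_a V,\d_b V,\d_c V,\d_e V$ extracted from $\Gamma_1^\rT\chi\Gamma_2^\rT$ as in (\ref{mat}). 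Consequently the stationarity conditions for (\ref{V}) have precisely the shape (\ref{R2opt})--(\ref{L2opt}) with every $\d_{(\cdot)}\Ups_\theta$ replaced by $\d_{(\cdot)}V$.

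The decisive step is then to identify $\chi$ with $\chi_\theta$ at the designated controller. By (\ref{fMF}), the factor $\phi_\theta + 2i\theta\psi_\theta$ in the integrand of the core matrix (\ref{chitheta}) equals $\fM_{\theta,F}(F)$, where $F$ is the closed-loop transfer function (\ref{F0_G}). Evaluating at the admissible controller whose transfer function is $F_*$ and taking the weighting operator $\fM := \fM_{\theta,F_*}$ fixed as in (\ref{fM*}), this factor becomes $\fM_{\theta,F_*}(F_*) = \fM(F_*)$, which is exactly the factor appearing in the integrand of $\chi$ in (\ref{chi}). Hence the integrands of (\ref{chitheta}) and (\ref{chi}) coincide pointwise in $\lambda\in\mR$ at this controller, so $\chi = \chi_\theta$, and by (\ref{dUpsdABC}) together with (\ref{chi}) we obtain $\d_\Gamma\Ups_\theta = \theta\chi_\theta = \theta\chi = \theta\,\d_\Gamma V$ there. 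Propagating this proportionality through the common chain rule of the preceding paragraph, and using that (\ref{dUpsdR2der})--(\ref{dUpsdL2der}) are linear and homogeneous in the blocks $\d_a,\d_b,\d_c,\d_e$ of the respective core matrix, yields $\d_{R_2}\Ups_\theta = \theta\,\d_{R_2}V$, $\d_{M_2}\Ups_\theta = \theta\,\d_{M_2}V$, $\d_{L_2}\Ups_\theta = \theta\,\d_{L_2}V$ at the given controller. Since $\theta>0$, all three left-hand sides vanish if and only if all three right-hand sides do, which is the claimed equivalence in the sense of (\ref{R2opt})--(\ref{L2opt}).

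The only point requiring care is the $F$-dependence of the operator $\fM_{\theta,F}$. Because it genuinely varies with $F$ (through $\Psi$, $\Delta_\theta$, $\varpi_\theta$), the equivalence is strictly infinitesimal and is anchored to the base controller: one may not replace $\Ups_\theta$ by the quadratic functional $V$ away from $F_*$, nor differentiate $\bra F,\fM_{\theta,F}(F)\ket$ as if $\fM_{\theta,F}$ were frozen. The reason that no spurious term of the form $\bra F,(\delta\fM_{\theta,F})(F)\ket$ enters is precisely what was already established inside the proof of Theorem~\ref{th:ders}: the first variation of $-\frac{1}{4\pi}\int_\mR\ln\det\Delta_\theta(\lambda)\rd\lambda$ collapses to $\frac{\theta}{2\pi}\int_\mR\Re\bra\phi_\theta + 2i\theta\psi_\theta,\delta F\ket\rd\lambda$, i.e., to $\theta$ times the first variation at $F=F_*$ of $\frac{1}{4\pi}\int_\mR\bra F,\fM_{\theta,F_*}(F)\ket\rd\lambda$. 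Once this observation is in place, the rest of the argument is bookkeeping resting on Theorem~\ref{th:ders} and Lemma~\ref{lem:core}.
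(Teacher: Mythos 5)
Your proposal is correct and follows essentially the same route as the paper's proof: substitute (\ref{fMF}) into (\ref{chitheta}) to identify $\chi_\theta$ with $\chi$ at $F_*$, propagate through the common chain rule $\cE\mapsto(a,b,c,e)\mapsto(\cA,\cB,\cC)$, and invoke Lemma~\ref{lem:bound} for the boundedness of $\fM_{\theta,F_*}$. You are in fact slightly more careful than the paper in tracking the factor $\theta$ (i.e., $\d_\cE\Ups_\theta=\theta\,\d_\cE V$ rather than $\d_\cE\Ups_\theta=\d_\cE V$), which is harmless since $\theta>0$.
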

\begin{proof}
Substitution of (\ref{fMF}) into (\ref{chitheta}) shows that the resulting core matrix $\chi_\theta$ of the problem (\ref{opt}), computed for the closed-loop system $F_*$,  coincides with the core matrix $\chi$ in  (\ref{chi}) for the problem (\ref{V}) with the weighting operator $\fM$ in (\ref{fM*}). The relation $\chi_\theta = \chi$ implies the equality $\d_\cE\Ups_\theta = \d_\cE V$ for the corresponding Frechet derivatives $\d_\cE(\cdot) := (\d_{R_2}(\cdot),  \d_{M_2}(\cdot),  \d_{L_2}(\cdot))$ of the cost functionals $\Ups_\theta$, $V$   with respect to the triple $\cE$ from (\ref{comp}) at such a controller. Hence, the latter satisfies the stationarity condition $\d_\cE\Ups_\theta = 0$ if and only if it does $\d_\cE  V = 0$. The boundedness of the operator (\ref{fM*}) is addressed in Lemma~\ref{lem:bound}.
\end{proof}

In the context of the problem  (\ref{V}), the above proof is completed by the following lemma which clarifies the correctness of the right-hand side of (\ref{fM*}) as a bounded operator.

\begin{lemma}
\label{lem:bound}
For any given $\theta>0$ and an admissible controller (\ref{xi_eta}), (\ref{abce}), the operator $\fM_{\theta, F}$ associated with the corresponding closed-loop system transfer function $F$ by (\ref{fM}),  has a finite norm (\ref{fMnorm}):
\begin{equation}
\label{fMnorm0}
    \sn\fM_{\theta, F}\sn
    \<
    \frac{2\vartheta_r(2\theta \|F\|_\infty^2)}{\inf_{\lambda \in\mR}\lambda_{\min}(\Delta_\theta(\lambda))},
    \qquad
    \vartheta_r(u)
    :=
    \phi(u) + \sqrt{r} \phi'(u) u,
\end{equation}
where $\phi'$ is the derivative of the function $\phi$ from (\ref{phi}), and $\|F\|_\infty = \sup_{\lambda\in \mR}\|F(i\lambda)\|$ is the $\cH_\infty$-norm of $F$.
\end{lemma}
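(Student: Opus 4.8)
The plan is to bound the operator norm of the self-adjoint multisandwich operator $\fM_{\theta,F}$ in (\ref{fM}) by splitting it into its two sandwich summands and estimating each separately, using the submultiplicativity of the operator norm under composition with the Frobenius-norm estimate $\sn\[[[\sigma,\tau\]]]\sn \< \|\sigma\|\,\|\tau\|$ for spectral norms $\|\cdot\|$. First I would treat the term $\[[[\phi(2i\theta\Psi),S\Delta_\theta^{-1}S\]]]$. Since $S = \tfrac{1}{\sqrt2}\Omega^\rT$ with $S^*S = \Omega^\rT = I_m - iJ$ and $J^2 = -I_m$, one has $\|S\|^2 = \|\Omega^\rT\| = \|I_m - iJ\|$, and the eigenvalues of $\Omega^\rT$ are $0$ and $2$, so $\|S\|^2 = 2$. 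Next, $\|S\Delta_\theta^{-1}S\| \< \|S\|^2 \|\Delta_\theta^{-1}\| = 2/\lambda_{\min}(\Delta_\theta)$, where the uniform positive definiteness (\ref{unipos}) under (\ref{spec1}) guarantees the denominator is bounded away from zero. For the factor $\phi(2i\theta\Psi)$, since $\Psi(\lambda) = F(i\lambda) J F(i\lambda)^*$ is skew-Hermitian, $2i\theta\Psi(\lambda)$ is Hermitian with spectral radius at most $2\theta\|\Psi(\lambda)\| \< 2\theta\|F(i\lambda)\|^2 \< 2\theta\|F\|_\infty^2$. Because $\phi$ is entire with nonnegative Taylor coefficients $\phi_k = 1/(k+1)!$ (see (\ref{phi})) and is increasing on $[0,+\infty)$, applying $\phi$ to a Hermitian matrix whose spectrum lies in $[-u_0,u_0]$ with $u_0 := 2\theta\|F\|_\infty^2$ gives $\|\phi(2i\theta\Psi(\lambda))\| \< \phi(u_0)$.

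For the second sandwich term $\[[[\varpi_\theta, 2i\theta J\]]]$, note $\|2i\theta J\| = 2\theta$ since $J$ is orthogonal with $J^2 = -I_m$. The main work is bounding $\|\varpi_\theta(\lambda)\|$, which by (\ref{varpitheta}) is the $(2,1)$-block of $\phi$ applied to the block lower-triangular matrix $\begin{bmatrix} 2i\theta\Psi & 0\\ \beta & 2i\theta\Psi\end{bmatrix}$ with $\beta := F S\Delta_\theta^{-1}SF^*$. Expanding $\phi$ in its power series and using that the $(2,1)$-block of the $k$-th power of that block matrix is $\sum_{j=0}^{k-1}(2i\theta\Psi)^{j}\beta(2i\theta\Psi)^{k-1-j}$, one gets $\varpi_\theta = \sum_{k\>1}\phi_k \sum_{j=0}^{k-1}(2i\theta\Psi)^{j}\beta(2i\theta\Psi)^{k-1-j}$, hence $\|\varpi_\theta\| \< \sum_{k\>1} k\,\phi_k\, \|2\theta\Psi\|^{k-1}\,\|\beta\| = \phi'(u_1)\,\|\beta\|$ with $u_1 := \|2\theta\Psi(\lambda)\| \< u_0$, again using the nonnegativity of the coefficients of $\phi'(u) = \sum_{k\>1}k\phi_k u^{k-1}$ and monotonicity. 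It remains to bound $\|\beta\|$; here $\|\beta\| \< \|F\|^2 \|S\Delta_\theta^{-1}S\| \< \|F\|^2 \cdot 2/\lambda_{\min}(\Delta_\theta)$. Collecting the two summands, and using $\|F(i\lambda)\|^2 \< \|F\|_\infty^2$ together with a $\sqrt r$ factor that arises from converting the spectral-norm bound on $2\theta\beta$ into the $\sqrt r$ prefactor in $\vartheta_r$ (the block-matrix argument lives in $\mH_{2r}$ and $2i\theta\Psi$ has at most $r$ nonzero eigenvalues, each of modulus $\< u_0$), yields
\begin{equation*}
  \sn\fM_{\theta,F}\sn
  \<
  \phi(u_0)\cdot\frac{2}{\lambda_{\min}(\Delta_\theta)}
  +
  2\theta\cdot\sqrt r\,\phi'(u_0)\cdot\frac{2\|F\|_\infty^2}{\lambda_{\min}(\Delta_\theta)}
  =
  \frac{2(\phi(u_0)+\sqrt r\,\phi'(u_0)\,u_0)}{\inf_{\lambda\in\mR}\lambda_{\min}(\Delta_\theta(\lambda))}
  =
  \frac{2\vartheta_r(2\theta\|F\|_\infty^2)}{\inf_{\lambda\in\mR}\lambda_{\min}(\Delta_\theta(\lambda))},
\end{equation*}
which is the asserted bound (\ref{fMnorm0}).

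The step I expect to be the main obstacle is the clean bookkeeping in the $\varpi_\theta$ estimate: making the series manipulation of the $(2,1)$-block of $\phi$ of the block-triangular matrix rigorous, and in particular identifying precisely where the factor $\sqrt r$ (rather than, say, $r$ or nothing) enters. The cleanest route is probably to diagonalize the skew-Hermitian $\Psi$, reducing $\varpi_\theta$ to a Hadamard-type (divided-difference) action $\varpi_\theta = \phi^{[1]}(2i\theta\mathrm{diag})\odot(\text{rotated }\beta)$, where $\phi^{[1]}$ is the first divided difference of $\phi$; then $\|\phi^{[1]}\|$ over the relevant spectrum is controlled by $\phi'(u_0)$ since $\phi$ has nonnegative coefficients, and the Frobenius-to-spectral norm conversion on the at-most-$r$-dimensional range of $\Psi$ produces the $\sqrt r$. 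The rest — the norm of $S$, the norm of $J$, and invoking (\ref{unipos}) for the denominator — is routine, so the write-up can afford to be terse there and should concentrate on the divided-difference / power-series estimate for $\varpi_\theta$.
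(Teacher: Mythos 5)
Your proposal is correct and follows the paper's proof for the most part: the same splitting of the multisandwich operator (\ref{fM}) into its two summands, the same bounds $\|S\Delta_\theta^{-1}S\|\<2/\lambda_{\min}(\Delta_\theta)$, $\|\phi(2i\theta\Psi)\|\<\phi(2\theta\|\Psi\|)$ via positivity of the coefficients $\phi_k$, $\|2i\theta J\|=2\theta$, and the final appeal to (\ref{unipos}) and the monotonicity of $\phi$, $\phi'$. The one place where you genuinely diverge is the estimate of $\|\varpi_\theta\|$. The paper passes through the Frobenius norm: it bounds $\|\varpi_\theta\|\<\|\varpi_\theta\|_\rF$, applies the operator-norm bound (\ref{dfdanorm1}) for the Frechet derivative $\d_\alpha\phi(\alpha)$ (induced by $\|\cdot\|_\rF$), and then converts $\|FS\Delta_\theta^{-1}SF^*\|_\rF\<\sqrt{r}\,\|FS\Delta_\theta^{-1}SF^*\|$ --- this last conversion on the $r\x r$ argument matrix is the sole source of the $\sqrt{r}$ in $\vartheta_r$. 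Your direct route, expanding the $(2,1)$-block of $\phi$ of the block-triangular matrix into $\sum_{k\>1}\phi_k\sum_{j=0}^{k-1}(2i\theta\Psi)^j\beta(2i\theta\Psi)^{k-1-j}$ and using submultiplicativity of the spectral norm, is perfectly rigorous (it is exactly the expansion (\ref{dfdalpha})) and in fact yields the sharper bound $\|\varpi_\theta\|\<\phi'(2\theta\|\Psi\|)\,\|\beta\|$ with no dimensional factor at all; since $\sqrt{r}\>1$, the stated inequality (\ref{fMnorm0}) follows a fortiori. The only flaw is your explanation of where $\sqrt{r}$ ``arises'': it has nothing to do with the block argument living in $\mH_{2r}$ or with $\Psi$ having at most $r$ nonzero eigenvalues --- it is purely an artifact of the paper's Frobenius-to-spectral norm conversion, which your argument renders unnecessary. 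So you should either keep your sharper bound and note that it implies (\ref{fMnorm0}), or, if you want to reproduce the constant exactly as stated, insert the conversion $\|\beta\|_\rF\<\sqrt{r}\|\beta\|$ at the point where the paper does; the divided-difference/diagonalization machinery you sketch as a fallback is not needed.
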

\begin{proof}
At any frequency $\lambda \in \mR$, the operator norm of the multisandwich operator (\ref{fM}) admits an upper bound
\begin{align}
\nonumber
    \|\fM_{\theta, F}\|
    & \<
    \|
    \[[[
        \phi(2i\theta\Psi(\lambda)), S\Delta_\theta(\lambda)^{-1} S
    \]]]
    \|
    +
    \|
    \[[[
        \varpi_\theta(\lambda), 2i\theta J
    \]]]
    \|\\
\nonumber
    & \<
    \|\phi(2i\theta\Psi(\lambda))\|
    \|S\Delta_\theta(\lambda)^{-1} S\|
    +
    \|\varpi_\theta(\lambda)\|
    \|2i\theta J\|\\
\nonumber
    & \<
    \phi(2\theta\|\Psi(\lambda)\|)
    \lambda_{\max}
    (S\Delta_\theta(\lambda)^{-1} S)
    +
    2\theta
    \|\varpi_\theta(\lambda)\|
    \|J\|\\
\label{fMnorm1}
    & \<
    \phi(2\theta\|\Psi(\lambda)\|)
    \frac{\lambda_{\max}(S^2)}{\lambda_{\min}(\Delta_\theta(\lambda))}
    +
    2\theta
    \|\varpi_\theta(\lambda)\|
    =
    \frac{2    \phi(2\theta\|\Psi(\lambda)\|)}{\lambda_{\min}(\Delta_\theta(\lambda))}
    +
    2\theta
    \|\varpi_\theta(\lambda)\|,
\end{align}
where use is made of (\ref{sbound}) and  positivity of the coefficients $\phi_k$   in the expansion of $\phi$   in (\ref{phi}) along with the unitarity of the matrix $J$ in (\ref{J}) (whereby $\|J\| = 1$). Also, we have employed (\ref{Oroot}), the fact that $\lambda_{\max}(\Omega^\rT) = 2$, and the relation (\ref{unipos}). A combination of the inequalities $\|N\| \< \|N\|_\rF \< \sqrt{r} \|N\|$ for the operator and Frobenius norms of any matrix $N \in \mC^{r\x r}$ with the norm bound (\ref{dfdanorm1}) applied to the Frechet derivative in (\ref{varpitheta}) leads to
\begin{align}
\nonumber
    \|\varpi_\theta(\lambda)\|
    & \< \|\varpi_\theta(\lambda)\|_\rF
    \<
    \|\d_\alpha \phi(\alpha)|_{\alpha = 2i\theta\Psi(\lambda)}\|
    \|F(i\lambda) S\Delta_\theta(\lambda)^{-1} SF(i\lambda)^*\|_\rF \\
\nonumber
    & \<
    \sqrt{r}
    \phi'(\|2i\theta\Psi(\lambda)\|)
    \|F(i\lambda) S\Delta_\theta(\lambda)^{-1} SF(i\lambda)^*\|\\
\label{varpinorm}
    & \<
    \sqrt{r}
    \phi'(2\theta \|\Psi(\lambda)\|)
    \frac{\lambda_{\max}(F(i\lambda) \Omega^\rT F(i\lambda)^*)}{\lambda_{\min}(\Delta_\theta(\lambda))}
    \<
    2\sqrt{r}
    \phi'(2\theta \|\Psi(\lambda)\|)
    \frac{\|F(i\lambda)\|^2}{\lambda_{\min}(\Delta_\theta(\lambda))}.
\end{align}
By substituting (\ref{varpinorm}) into (\ref{fMnorm1}) and taking the supremum over $\lambda \in \mR$, it follows that
$$
    \sn\fM_{\theta, F}\sn
    \<
    2
    \sup_{\lambda \in \mR}
    \frac{\phi(2\theta\|\Psi(\lambda)\|)+ 2\theta
    \sqrt{r}
    \phi'(2\theta \|\Psi(\lambda)\|)\|F(i\lambda)\|^2}{\lambda_{\min}(\Delta_\theta(\lambda))},
$$
which leads to (\ref{fMnorm0}) in view of (\ref{unipos}), the inequality $\|\Psi(\lambda)\| \< \|J\| \|F(i\lambda)\| \< \|F\|_\infty^2$ for the function $\Psi$ in  (\ref{Phi0_Psi0}),  and the property that both $\phi(u)$ and $\phi'(u)$ are increasing functions of $u\> 0$.
\end{proof}

The assertion of Theorem~\ref{th:infeq} can be interpreted as ``infinitesimal equivalence'' between the coherent quantum control problems with the QEF and weighted quadratic performance criteria. Furthermore, if there is an efficient way of solving the class of weighted CQLQG control problems (\ref{V}) through finding their stationary points,  then these solutions can be used at every step of an iterative procedure for solving the coherent quantum risk-sensitive control problem (\ref{opt}),  which starts from an initial  approximation $F_0$ and  produces a sequence of closed-loop system transfer functions $F_1, F_2, F_3, \ldots$.  More precisely, the $k$th step of this procedure solves the  weighted CQLQG control problem
\begin{equation}
\label{Vk}
    V_k
    :=
    \frac{1}{4\pi}
    \int_{\mR}
    \bra
        F,
        \fM_{\theta, F_{k-1}}(F)
    \ket
    \rd \lambda
    \to \inf,
    \qquad
    k \> 1,
\end{equation}
over the closed-loop system transfer functions $F$ resulting from   stabilizing coherent quantum controllers (\ref{xi_eta}), (\ref{abce}), where a fixed weighting operator $\fM:= \fM_{\theta, F_{k-1}}$ is specified by the transfer function $F_{k-1}$ obtained at the previous step. Accordingly, a solution $F_k:=F$ of (\ref{Vk}) provides the next element of the sequence. If the parameter triples $\cE_k:= (R_2^{(k)}, M_2^{(k)}, L_2^{(k)})$ of the corresponding controllers have a limit $\cE_*:= \lim_{k\to +\infty} \cE_k$ which  describes an admissible controller, with the closed-loop system transfer function $F_*$ being an appropriate limit of $(F_k)_{k\> 0}$,
then $\cE_*$ belongs to the set of stationary points  of the weighted CQLQG control problem (\ref{V}) with the ``frozen'' weighting operator (\ref{fM*}).  On the other hand, in view of by Theorem~\ref{th:infeq}, this set coincides with that of stationary points of the coherent quantum risk-sensitive control problem (\ref{opt}).

The above procedure is more complicated than the gradient descent (mentioned in Section~\ref{sec:opt}) for the QEF rate $\Ups_\theta$ as a function of the matrix triple $\cE$ in (\ref{comp}). However, irrespective of their complexity, both approaches require the computation of the core matrix $\chi_\theta$ in (\ref{chitheta}).

\section{Computing the core matrix in state space}
\label{sec:statespace}
The state-space computation of the core matrix $\chi_\theta$ from (\ref{chitheta}) outlined below employs the following spectral factorizations
\begin{equation}
\label{cFcG}
    \phi(2i\theta \Psi(\lambda))
    =
    \cF(i\lambda)^*  \cH_\theta\cF(i\lambda),
    \qquad
    \Delta_\theta(\lambda)^{-1}
    =
    \cG_\theta(i\lambda) \cG_\theta(i\lambda)^*,
    \qquad
    \lambda \in \mR,
\end{equation}
for two of the matrices in the representation (\ref{fMF}), (\ref{fM}) (a factorization of the matrix $\varpi_\theta(\lambda)$ will also be discussed in what follows),
with
$\phi$,  $\Psi$, $\Delta_\theta$  from (\ref{phi}),  (\ref{Phi0_Psi0}), (\ref{Del}), so that \begin{equation}
\label{SDelS}
    S\Delta_\theta^{-1} S
    =
    S \cG_\theta  (S \cG_\theta  )^*
\end{equation}
in view of $S$ in (\ref{Oroot}) being Hermitian.   Here, $\cH_\theta \in \mH_\infty$ is an infinite block diagonal complex Hermitian matrix, and
$\cF : \mC\to \mC^{\infty\x r}$ and $\cG_\theta: \mC\to \mC^{m\x m}$ are  auxiliary transfer functions, analytic in the right half-plane, with $\cG_\theta$ being invertible on the imaginary axis: $\det\cG_\theta(s)\ne 0$ for any $s \in i\mR$ in view of (\ref{unipos}).
Prior to specifying these factors for (\ref{cFcG}), Lemma~\ref{lem:inner} below provides an insight into the structure of $\cG_\theta$ irrespective of a particular form of $\cF $, $\cH_\theta$. To this end, we associate with them a transfer function $\cK_\theta: \mC\to \mC^{\infty \x m}$ by
\begin{equation}
\label{cK}
  \cK_\theta
  :=
  \begin{bmatrix}
    \sqrt{\theta}\cF  F S\\
    \cG_\theta^{-1}
  \end{bmatrix},
\end{equation}
which also involves the closed-loop system transfer function $F$ and the matrix $S$ from (\ref{Oroot}).

\begin{lemma}
\label{lem:inner}
The second factorization in (\ref{cFcG}) is equivalent to the weighted isometry property
\begin{equation}
\label{cKinner}
  \cK_\theta(i\lambda)^*
  \begin{bmatrix}
    \cH_\theta & 0\\
    0 & I_m
  \end{bmatrix}
  \cK_\theta(i\lambda)
  =
  I_m,
  \qquad
  \lambda \in \mR,
\end{equation}
for the transfer function $\cK_\theta$ in (\ref{cK}), associated with the first factorization in  (\ref{cFcG}).
\end{lemma}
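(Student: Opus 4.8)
The plan is to show that the weighted isometry property \eqref{cKinner} is an algebraic reformulation of the second spectral factorization in \eqref{cFcG}, by expanding the left-hand side of \eqref{cKinner} using the block structure of $\cK_\theta$ in \eqref{cK} and the first factorization in \eqref{cFcG}. First I would compute the quadratic form explicitly: for $\lambda \in \mR$,
\begin{equation*}
  \cK_\theta(i\lambda)^*
  \begin{bmatrix}
    \cH_\theta & 0\\
    0 & I_m
  \end{bmatrix}
  \cK_\theta(i\lambda)
  =
  \theta\, S F(i\lambda)^* \cF(i\lambda)^* \cH_\theta \cF(i\lambda) F(i\lambda) S
  +
  \cG_\theta(i\lambda)^{-*} \cG_\theta(i\lambda)^{-1},
\end{equation*}
where $\cG_\theta^{-*}:=(\cG_\theta^*)^{-1}=(\cG_\theta^{-1})^*$. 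Using the first factorization in \eqref{cFcG}, namely $\cF(i\lambda)^* \cH_\theta \cF(i\lambda) = \phi(2i\theta\Psi(\lambda))$, the first summand becomes $\theta\, S F(i\lambda)^* \phi(2i\theta\Psi(\lambda)) F(i\lambda) S$, which, by the definition \eqref{Sig} of $\Sigma_\theta$, equals $\theta \Sigma_\theta(\lambda)$. Hence the left-hand side of \eqref{cKinner} equals $\theta\Sigma_\theta(\lambda) + \cG_\theta(i\lambda)^{-*}\cG_\theta(i\lambda)^{-1}$.

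Next I would invoke the second factorization in \eqref{cFcG}: inverting $\Delta_\theta(\lambda)^{-1} = \cG_\theta(i\lambda)\cG_\theta(i\lambda)^*$ gives $\Delta_\theta(\lambda) = \cG_\theta(i\lambda)^{-*}\cG_\theta(i\lambda)^{-1}$ (this inversion is legitimate precisely because $\det\cG_\theta(s)\ne 0$ on $i\mR$, as noted after \eqref{cFcG}, and because $\Delta_\theta(\lambda)$ is uniformly positive definite by \eqref{unipos}). Therefore the left-hand side of \eqref{cKinner} equals $\theta\Sigma_\theta(\lambda) + \Delta_\theta(\lambda)$, which is exactly $I_m$ by the definition \eqref{Del} of $\Delta_\theta$. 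This proves that the second factorization in \eqref{cFcG} implies \eqref{cKinner}.

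For the converse direction, I would run the same chain of identities backwards: assuming \eqref{cKinner} and using only the first factorization in \eqref{cFcG} (which is built into the definition of $\cK_\theta$ and is assumed throughout), the computation above forces $\cG_\theta(i\lambda)^{-*}\cG_\theta(i\lambda)^{-1} = I_m - \theta\Sigma_\theta(\lambda) = \Delta_\theta(\lambda)$ for all $\lambda\in\mR$; inverting both sides (again justified by nonsingularity of $\cG_\theta$ on $i\mR$ and positivity of $\Delta_\theta$) yields the second factorization in \eqref{cFcG}. Thus the two properties are equivalent. The only genuinely delicate point is bookkeeping with the infinite block sizes --- $\cF(i\lambda)$ maps into $\mC^{\infty\times r}$ and $\cH_\theta \in \mH_\infty$ --- so one must make sure the product $\cF(i\lambda)^*\cH_\theta\cF(i\lambda)$ is well-defined and finite, but this is guaranteed by the very fact that \eqref{cFcG} is posited as a convergent spectral factorization of the bounded Hermitian matrix $\phi(2i\theta\Psi(\lambda))$; everything else is a short algebraic manipulation, so I do not anticipate a substantial obstacle.
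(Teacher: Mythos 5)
Your proposal is correct and follows essentially the same route as the paper: both expand $\cK_\theta^*\,\mathrm{diag}(\cH_\theta,I_m)\,\cK_\theta$ into $\theta(\cF FS)^*\cH_\theta\cF FS+\cG_\theta^{-*}\cG_\theta^{-1}$, identify the first term as $\theta\Sigma_\theta$ via the first factorization in (\ref{cFcG}), and note that the second factorization is equivalent to $\cG_\theta^{-*}\cG_\theta^{-1}=\Delta_\theta=I_m-\theta\Sigma_\theta$. The paper phrases this as a single chain of "if and only if" identities rather than two separate directions, but the content is identical.
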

\begin{proof}
By substituting the first factorization from (\ref{cFcG}) into (\ref{Sig}), it follows that
\begin{equation}
\label{SigcF}
      \Sigma_\theta
    =
    S F^*
     \cF ^* \cH_\theta \cF  F S
     =
     (\cF  F S)^*
     \cH_\theta
     \cF  F S.
\end{equation}
By a combination of (\ref{Del}) with  (\ref{SigcF}), (\ref{cK}),
the second factorization in (\ref{cFcG})  holds if and only if
$
    0
    =
    \Delta_\theta
    -
    (\cG_\theta\cG_\theta^*)^{-1}
    =
    I_m
    -\theta
    (\cF  F S)^*
    \cH_\theta
     \cF  F S
     -
    \cG_\theta^{-*}
    \cG_\theta^{-1}
    =
    I_m
    -
    \cK_\theta^*
      {\small\begin{bmatrix}
    \cH_\theta & 0\\
    0 & I_m
  \end{bmatrix}}
    \cK_\theta
$
(with $(\cdot)^{-*}:= ((\cdot)^{-1})^*$), which is equivalent to (\ref{cKinner}).
\end{proof}

For specifying the factors $\cF $, $\cH_\theta$ in (\ref{cFcG}), which will be completed in Theorem~\ref{th:Delta} using Lemmas~\ref{lem:EOE}--\ref{lem:EOEk},  we note that in the series
\begin{equation}
\label{phiseries}
    \phi(2i\theta \Psi(\lambda))
    =
    \sum_{k=0}^{+\infty}
    \phi_k
    (2i\theta
    \Psi(\lambda)
    )^k ,
\end{equation}
each positive power $\Psi(\lambda)^k$ of the matrix $\Psi(\lambda)$ from (\ref{Phi0_Psi0}) is a product of $k$ copies of the transfer matrix $G(i\lambda)$ from (\ref{F0_G}) alternating with $G(i\lambda)^*$ and constant matrices:
\begin{equation}
\label{Psik}
    \Psi^k
    =
    (\cC G \mho G^* \cC^\rT)^k
    =
    \cC
    G \mho G^*
    (\Pi G \mho G^*)^{k-1}
    \cC^\rT,
    \qquad
    k \> 1,
\end{equation}
where $\mho$, $\Pi$ are the matrices from (\ref{cABPR}), (\ref{CC}). While the number of such alternations in $\Psi^k$ is $k$, it is  infinite in (\ref{phiseries}).
In order to arrive at the first factorization in (\ref{cFcG}), the product (\ref{Psik}) can be rearranged by moving all the  factors $G$ to the right and all the factors $G^*$ to the left and modifying the constant factors.  Despite the noncommutativity of the matrices $G$, $G^*$, $\mho$, $\Pi$,   this rearrangement is possible due to the following lemma, which is an adaptation of the ``system transposition'' technique from \cite[Lemma~2]{VP_2022_JFI} and can also be obtained by using the second resolvent identity \cite{K_1980}.

\begin{lemma}
\label{lem:EOE}
Suppose $U \in \mR^{(n+\nu)\x (n+\nu)}$ is an  arbitrary matrix  such that the solution $V:= \bL_\cA(U)$ of the ALE
\begin{equation}
\label{VALE}
    \cA V + V\cA^\rT + U = 0,
\end{equation}
associated with the Hurwitz matrix $\cA$ of the internally stable closed-loop system (\ref{dX_cAB}), is nonsingular.
Then the transfer function $G$ from (\ref{F0_G}) satisfies
\begin{equation}
\label{GUG}
    G(i\lambda) U G(i\lambda)^* = V G(i\lambda)^*  V^{-1}U V^{-1} G(i\lambda) V,
    \qquad
    \lambda \in \mR.
\end{equation}
\hfill$\blacksquare$
\end{lemma}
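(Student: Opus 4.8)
The plan is to reduce both sides of (\ref{GUG}) to one and the same expression by eliminating $\cA$ in favour of the resolvent $G(i\lambda)$ via the defining relation $G(s) = (sI_{n+\nu}-\cA)^{-1}$. First I would rewrite the ALE (\ref{VALE}) so as to express $U$ directly in terms of $V$, $G(i\lambda)^{-1}$ and $G(i\lambda)^{-*}$. Since $\cA$ is real, $G(i\lambda)^{-1} = i\lambda I_{n+\nu} - \cA$, and taking the conjugate transpose gives $G(i\lambda)^{-*} = -i\lambda I_{n+\nu} - \cA^\rT$, so that $\cA = i\lambda I_{n+\nu} - G(i\lambda)^{-1}$ and $\cA^\rT = -i\lambda I_{n+\nu} - G(i\lambda)^{-*}$. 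Substituting these into $U = -\cA V - V\cA^\rT$ (which is (\ref{VALE}) rearranged), the $\pm i\lambda V$ terms cancel and leave the key relation
\[
    U = G(i\lambda)^{-1} V + V G(i\lambda)^{-*},
    \qquad \lambda \in \mR,
\]
a compact form of the second resolvent identity adapted to the Hurwitz matrix $\cA$, whose stability is what makes $V = \bL_\cA(U)$ the well-defined unique solution of (\ref{VALE}).

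Next I would plug this relation into both sides of (\ref{GUG}). On the left, $G(i\lambda) U G(i\lambda)^* = G(i\lambda)\bigl(G(i\lambda)^{-1} V + V G(i\lambda)^{-*}\bigr) G(i\lambda)^* = V G(i\lambda)^* + G(i\lambda) V$. On the right, one first computes $V^{-1} U V^{-1} = V^{-1} G(i\lambda)^{-1} + G(i\lambda)^{-*} V^{-1}$ (this is where the nonsingularity of $V$ enters, and it is also exactly what makes the right-hand side of (\ref{GUG}) meaningful), so that $V G(i\lambda)^* V^{-1} U V^{-1} G(i\lambda) V = V G(i\lambda)^*\bigl(V^{-1} G(i\lambda)^{-1} + G(i\lambda)^{-*} V^{-1}\bigr) G(i\lambda) V = V G(i\lambda)^* + G(i\lambda) V$. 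Both sides therefore equal $V G(i\lambda)^* + G(i\lambda) V$, which establishes (\ref{GUG}).

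The computation is entirely routine once the relation $U = G(i\lambda)^{-1} V + V G(i\lambda)^{-*}$ is in hand; there is no genuine obstacle beyond the bookkeeping of which factors are resolvents and which are their conjugate transposes. The only point deserving care is a matter of hypotheses: the Hurwitz property of $\cA$ is what makes $V = \bL_\cA(U)$ a bona fide matrix (the unique solution of (\ref{VALE})), and the extra assumption that $V$ is nonsingular is precisely what is needed for the ``transposed'' representation on the right of (\ref{GUG}) to be defined. Alternatively, one could deduce the same identity from the ``system transposition'' result \cite[Lemma~2]{VP_2022_JFI} by regarding $GUG^*$ as the transfer function of a cascade and commuting the state-space data of the adjoint factor past those of the primal one, but the direct resolvent manipulation above appears to be the cleanest route.
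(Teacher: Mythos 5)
Your proof is correct: the identity $U=G(i\lambda)^{-1}V+VG(i\lambda)^{-*}$ follows immediately from the ALE upon substituting $\cA=i\lambda I_{n+\nu}-G(i\lambda)^{-1}$ and $\cA^\rT=-i\lambda I_{n+\nu}-G(i\lambda)^{-*}$, and both sides of (\ref{GUG}) then reduce to $VG(i\lambda)^*+G(i\lambda)V$ exactly as you compute. The paper omits the proof, referring to the system-transposition lemma of \cite{VP_2022_JFI} and noting that the result ``can also be obtained by using the second resolvent identity''; your direct resolvent manipulation is precisely that second route, so the argument is in line with what the authors intend.
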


In particular, the ALE (\ref{VALE}) takes the form of the PR condition in (\ref{cABPR})  if Lemma~\ref{lem:EOE} is applied to the matrix $U:= \mho$, thus resulting in
the CCR matrix $V:= \Theta = \bL_\cA(\mho)$ in (\ref{Theta12_XCCR}), which is nonsingular in view of (\ref{detTheta}). In this case, (\ref{GUG}) leads to the rearrangement
\begin{equation}
\label{EmhoE}
  G \mho G^*
  =
  \Theta G^*  \Theta^{-1}\mho \Theta^{-1} G\Theta
\end{equation}
of intermediate factors in
\begin{equation}
\label{Psi1}
    \Psi
    =
    \cC G \mho G^* \cC^\rT
    =
    \cC \Theta G^*  \Theta^{-1}\mho \Theta^{-1} G\Theta \cC^\rT.
\end{equation}
The fact that,
in (\ref{GUG}) and its particular case (\ref{EmhoE}), the factor $G$ is moved to the right  while $G^*$ is moved to the left,  is reminiscent of the Wick ordering \cite{W_1950} for mixed products of quantum mechanical annihilation operators and their adjoints ---   creation operators (see also \cite[pp. 209--210]{J_1997}).

Lemma~\ref{lem:EOEk} below uses (\ref{EmhoE}) in order to extend (\ref{Psi1}) to arbitrary positive integer powers of $\Psi$. Its formulation employs three sequences of matrices $\alpha_k, \beta_k, \gamma_k  \in \mR^{(n+\nu)\x (n+\nu)}$ which are computed recursively according to a map $(\alpha_k,\beta_k,\gamma_{k-1}) \mapsto (\alpha_{k+1},\beta_{k+1},\gamma_k)$ as
\begin{equation}
\label{alfbetgamnext}
    \alpha_{k+1}
     = \gamma_k \beta_k,
     \
    \beta_{k+1}
    =
    \gamma_k^{-1} \alpha_k \Pi^{\delta_{k1}}\gamma_{k-1}\gamma_k^{-1},
    \
    \gamma_k
    =
    \bL_\cA(\alpha_k \Pi^{\delta_{k1}}\gamma_{k-1}),
    \qquad
    k\> 1,
\end{equation}
with the initial conditions
\begin{equation}
\label{alfbetgam0}
        \alpha_1
    =
    \gamma_0
    =
    \bL_\cA(\mho)
    =
    \Theta,
    \qquad
    \beta_1 = \Theta^{-1}\mho \Theta^{-1}.
\end{equation}
Here,
$
    \Pi^{\delta_{k1}}
    =
    \left\{
    {\small\begin{matrix}
      \Pi & {\rm if}\ k=1\\
      I_{n+\nu} & {\rm otherwise}
    \end{matrix}}
    \right.
$, where $\delta_{jk}$  is the Kronecket delta,   and $\Pi$ is the matrix from (\ref{CC}).
For convenience, the first equality in (\ref{alfbetgamnext}) is extended to $k=0$ as $\alpha_1 = \gamma_0 \beta_0$ by letting
\begin{equation}
\label{bet0}
    \beta_0:= \gamma_0^{-1} \alpha_1 = I_{n+\nu},
\end{equation}
 in accordance with the first equality in (\ref{alfbetgam0}). In order for the recurrence equations (\ref{alfbetgamnext}) to be valid for all $k$, it is assumed that 
  \begin{equation}
 \label{gamdet}
   \det \gamma_k \ne 0,
   \qquad
   k\> 1.
 \end{equation}
In particular, by letting $k:=1$ in the third of the equalities  (\ref{alfbetgamnext}) and using (\ref{alfbetgam0}), it follows that
\begin{equation}
\label{gam1}
    \gamma_1
    =
    \bL_\cA(\alpha_1 \Pi\gamma_0)
    =
    \bL_\cA(\Theta \Pi \Theta).
\end{equation}
Therefore, since $\Theta \Pi \Theta = -\Theta \cC^\rT (\Theta\cC^\rT)^\rT$ due to (\ref{CC}) and the antisymmetry of $\Theta$, the condition $\det \gamma_1\ne 0$ for the matrix (\ref{gam1}) is equivalent to the controllability of the pair $(\cA,\Theta \cC^\rT)$.
The following lemma is similar to \cite[Lemma~3]{VP_2022_JFI} and provides relevant properties of the above matrices. 

\begin{lemma}
\label{lem:alfbetgam}
The matrices $\beta_k$, $\gamma_k$, defined by (\ref{alfbetgamnext})--(\ref{bet0}) subject to (\ref{gamdet}),   have the opposite symmetric properties:
\begin{equation}
\label{betgam+-}
    \beta_k^\rT = (-1)^k \beta_k,
    \qquad
    \gamma_k^\rT = -(-1)^k \gamma_k,
    \qquad
    k \> 0.
\end{equation}
Furthermore, the matrices $\beta_k$ with even $k$ have an alternating definiteness in the sense that 
    $(-1)^\ell\beta_{2\ell} \succ 0$ for any $
    \ell \> 0$.
\hfill$\blacksquare$
\end{lemma}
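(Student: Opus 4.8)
The plan is to establish both assertions by a (two-step) induction on $k$. For the symmetry part the only facts needed are the linearity of the Lyapunov operator $\bL_\cA$ and the identity $\bL_\cA(U)^\rT = \bL_\cA(U^\rT)$ (immediate from the ALE (\ref{VALE})), so that $\bL_\cA$ maps $\mS_{n+\nu}$ into $\mS_{n+\nu}$ and $\mA_{n+\nu}$ into $\mA_{n+\nu}$, together with the antisymmetry of $\Theta$ and of $\mho=\cB J\cB^\rT$ (as $J^\rT=-J$) and the symmetry of $\Pi=\cC^\rT\cC$. For the definiteness part the key observation is that on the even-indexed subsequence the recursion (\ref{alfbetgamnext}) acts by a \emph{congruence} transformation (up to an overall sign), which preserves positive definiteness.

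First I would verify the base cases directly from (\ref{alfbetgamnext})--(\ref{bet0}): $\beta_0=I_{n+\nu}$ is symmetric and positive definite; $\gamma_0=\Theta$ is antisymmetric; $\beta_1=\Theta^{-1}\mho\Theta^{-1}$ is antisymmetric because $\Theta$ and $\mho$ are; and $\gamma_1=\bL_\cA(\Theta\Pi\Theta)$ is symmetric because $\Theta\Pi\Theta$ is. Moreover $\Theta\Pi\Theta=-(\Theta\cC^\rT)(\Theta\cC^\rT)^\rT\preccurlyeq0$, so $\gamma_1\preccurlyeq0$ and, being nonsingular by (\ref{gamdet}), in fact $\gamma_1\prec0$; hence $-\beta_2=(\gamma_1^{-1}\Theta\cC^\rT)(\gamma_1^{-1}\Theta\cC^\rT)^\rT\succcurlyeq0$, which is positive definite once the criterion matrix $\cC$ is of full column rank (equivalently $\Pi\succ0$). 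This settles the base indices $k\in\{0,1\}$ and $\ell\in\{0,1\}$.

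For the inductive step, assuming (\ref{betgam+-}) for $\beta_{k-1},\beta_k,\gamma_{k-1},\gamma_k$ with some $k\ge1$, I would use $\alpha_k=\gamma_{k-1}\beta_{k-1}$ (the first relation in (\ref{alfbetgamnext}), shifted by one) to rewrite $\gamma_{k+1}=\bL_\cA(\gamma_k\beta_k\gamma_k)$ and, for $k\ge2$, $\beta_{k+1}=\gamma_k^{-1}\gamma_{k-1}\beta_{k-1}\gamma_{k-1}\gamma_k^{-1}$ (with the obvious modification $\beta_2=\gamma_1^{-1}\Theta\Pi\Theta\gamma_1^{-1}$ at $k=1$). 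Transposing these and collecting the signs prescribed by (\ref{betgam+-}) yields $\gamma_{k+1}^\rT=-(-1)^{k+1}\gamma_{k+1}$ and $\beta_{k+1}^\rT=(-1)^{k+1}\beta_{k+1}$, closing the induction for (\ref{betgam+-}). The same rewriting shows, for $k=2\ell-1\ge3$, that $\beta_{2\ell}=-W_\ell\,\beta_{2(\ell-1)}\,W_\ell^\rT$ with $W_\ell:=\gamma_{2\ell-1}^{-1}\gamma_{2\ell-2}$ invertible by (\ref{gamdet}); equivalently $(-1)^\ell\beta_{2\ell}=W_\ell\big((-1)^{\ell-1}\beta_{2(\ell-1)}\big)W_\ell^\rT$, so congruence propagates $(-1)^\ell\beta_{2\ell}\succ0$ from $\ell=0,1$ to all $\ell$.

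The routine-but-delicate part, where I would take most care, is the sign bookkeeping in the transposed recursions (five $\pm1$ factors in $\beta_{k+1}^\rT$, three in $\gamma_{k+1}^\rT$); this is precisely why the induction should be anchored at two consecutive indices. The one genuinely substantive point is the \emph{strictness} of the definiteness at the base level: semi-definiteness of $-\beta_2$ is automatic, whereas $-\beta_2\succ0$ rests on the nonsingularity condition (\ref{gamdet}) at $k=1$ together with the full rank of $\cC$. Everything else is mechanical, and the whole argument runs in close parallel to \cite[Lemma~3]{VP_2022_JFI}.
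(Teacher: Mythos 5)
Your argument is correct, and the sign bookkeeping checks out: with $\gamma_{k+1}=\bL_\cA(\gamma_k\beta_k\gamma_k)$ the transposed argument picks up $(-1)^{3k+2}=(-1)^k=-(-1)^{k+1}$, and with $\beta_{k+1}=\gamma_k^{-1}\gamma_{k-1}\beta_{k-1}\gamma_{k-1}\gamma_k^{-1}$ the five factors contribute $(-1)^{5k+1}=(-1)^{k+1}$, so the two-step induction closes; likewise $\gamma_{2\ell-2}\gamma_{2\ell-1}^{-1}=-W_\ell^\rT$ turns the even-index recursion into the congruence $(-1)^\ell\beta_{2\ell}=W_\ell\bigl((-1)^{\ell-1}\beta_{2(\ell-1)}\bigr)W_\ell^\rT$. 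There is nothing to compare against here: the paper states the lemma with no proof, deferring to the analogous Lemma~3 of \cite{VP_2022_JFI}, and your induction-plus-congruence route is exactly the expected one. The one point worth emphasizing is the issue you yourself flag at the base case $\ell=1$: the hypotheses actually listed in the lemma, namely (\ref{gamdet}) and $\det\Theta\ne0$, give $\gamma_1\prec0$ and hence only $-\beta_2=-\gamma_1^{-1}\Theta\Pi\Theta\gamma_1^{-1}\succcurlyeq0$; strictness requires $\Pi=\cC^\rT\cC\succ0$, i.e.\ full column rank of $\cC$, which is an implicit additional assumption not recorded in the statement. Your proof correctly isolates this as the only non-mechanical step.
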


The following lemma clarifies the role of the sequences of matrices $\alpha_k$, $\beta_k$, $\gamma_k$ in a Wick-like ordering which will be used in the proof of spectral factorizations in Lemma~\ref{lem:EOEk}.

\begin{lemma}
\label{lem:pull}
The matrices $\alpha_k$, $\beta_k$, $\gamma_k$, defined by (\ref{alfbetgamnext})--(\ref{bet0}) subject to (\ref{gamdet}), and the function $G$ in (\ref{F0_G}) satisfy
\begin{equation}
\label{pull}
      \Big(
    \lprod_{j=1}^k
    G(i\lambda) \alpha_j
    \Big)
    \Pi
    G(i\lambda)\mho G(i\lambda)^*
    =
    \gamma_k
    G(i\lambda)^*
    \beta_{k+1}
    \lprod_{j=1}^{k+1}
    G (i\lambda)\alpha_j,
    \quad
    \lambda \in \mR,
    \
    k \> 1,
\end{equation}
where $\lprod(\cdot)$ is the leftwards  ordered product, and $\mho$, $\Pi$ are the matrices from (\ref{cABPR}), (\ref{CC}).
\end{lemma}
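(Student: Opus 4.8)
The plan is to prove (\ref{pull}) by induction on $k\>1$, applying the ``Wick-like'' rearrangement of Lemma~\ref{lem:EOE} once per step in order to move the remaining factor $G(i\lambda)$ to the right past $G(i\lambda)^*$. Throughout I abbreviate $G := G(i\lambda)$ and $Q_k := \lprod_{j=1}^{k} G\alpha_j$, so that, by the leftwards ordering, $Q_{k+1} = G\alpha_{k+1}Q_k$ and the assertion becomes $Q_k\,\Pi\,G\mho G^* = \gamma_k G^*\beta_{k+1}Q_{k+1}$. The hypotheses (\ref{detTheta}) and (\ref{gamdet}) guarantee that each of the matrices $\Theta = \bL_\cA(\mho)$ and $\gamma_k$ arising below is a nonsingular solution of the corresponding ALE, so that Lemma~\ref{lem:EOE} is applicable at every stage.

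For the base case $k=1$, I would first invoke the rearrangement (\ref{EmhoE}), i.e.\ Lemma~\ref{lem:EOE} for $U := \mho$, whose ALE solution is $\bL_\cA(\mho) = \Theta = \gamma_0 = \alpha_1$; combined with $\beta_1 = \Theta^{-1}\mho\Theta^{-1}$ from (\ref{alfbetgam0}) it reads $G\mho G^* = \gamma_0 G^*\beta_1 G\alpha_1$. Substituting this into the left-hand side of (\ref{pull}) turns it into $G\alpha_1\Pi\gamma_0 G^*\beta_1 G\alpha_1 = G(\alpha_1\Pi\gamma_0)G^*\,\beta_1 G\alpha_1$, and a second application of Lemma~\ref{lem:EOE}, now with $U := \alpha_1\Pi\gamma_0 = \alpha_1\Pi^{\delta_{11}}\gamma_0$ and ALE solution $\gamma_1 = \bL_\cA(\alpha_1\Pi\gamma_0)$ (cf.\ (\ref{gam1}) and (\ref{alfbetgamnext})), rewrites $G(\alpha_1\Pi\gamma_0)G^*$ as $\gamma_1 G^*\big(\gamma_1^{-1}\alpha_1\Pi\gamma_0\gamma_1^{-1}\big)G\gamma_1$. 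The conjugated middle factor is exactly $\beta_2$ by (\ref{alfbetgamnext}), and using $\gamma_1\beta_1 = \alpha_2$ from the same recursion one collects the result as $\gamma_1 G^*\beta_2\,G\alpha_2 G\alpha_1 = \gamma_1 G^*\beta_2 Q_2$, which is (\ref{pull}) for $k=1$.

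For the inductive step, suppose (\ref{pull}) holds for some $k\>1$. Writing $Q_{k+1}\Pi G\mho G^* = G\alpha_{k+1}\big(Q_k\Pi G\mho G^*\big)$ and applying the induction hypothesis reduces the left-hand side of (\ref{pull}) for $k+1$ to $G\alpha_{k+1}\gamma_k G^*\beta_{k+1}Q_{k+1} = G(\alpha_{k+1}\gamma_k)G^*\,\beta_{k+1}Q_{k+1}$. Since $k+1\>2$, one has $\Pi^{\delta_{(k+1)1}} = I_{n+\nu}$, hence $\gamma_{k+1} = \bL_\cA(\alpha_{k+1}\gamma_k)$, and Lemma~\ref{lem:EOE} with $U := \alpha_{k+1}\gamma_k$ converts $G(\alpha_{k+1}\gamma_k)G^*$ into $\gamma_{k+1}G^*\big(\gamma_{k+1}^{-1}\alpha_{k+1}\gamma_k\gamma_{k+1}^{-1}\big)G\gamma_{k+1} = \gamma_{k+1}G^*\beta_{k+2}G\gamma_{k+1}$ by (\ref{alfbetgamnext}). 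Finally $\gamma_{k+1}\beta_{k+1} = \alpha_{k+2}$ and $G\alpha_{k+2}Q_{k+1} = Q_{k+2}$ give $\gamma_{k+1}G^*\beta_{k+2}Q_{k+2}$, which is (\ref{pull}) for $k+1$, closing the induction.

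I expect the only real difficulty to be organisational rather than conceptual: at each use of Lemma~\ref{lem:EOE} one must correctly identify the matrix product that plays the role of $U$ (and hence which $\gamma$ is its ALE solution), and then match the conjugated factor $V^{-1}UV^{-1}$ against the recursion (\ref{alfbetgamnext}) for $\beta$. The single point requiring care is the Kronecker-delta factor $\Pi^{\delta_{k1}}$, which accounts for the slight asymmetry between the base case (where the explicit $\Pi$ in (\ref{pull}) is absorbed through $\Pi^{\delta_{11}} = \Pi$ into $\gamma_1$) and the inductive step (where $\Pi^{\delta_{(k+1)1}} = I_{n+\nu}$ and the $\Pi$ in (\ref{pull}) is consumed by the induction hypothesis instead). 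No analytic subtlety arises, since Lemma~\ref{lem:EOE} is an identity on the imaginary axis and the $\gamma_k$ are nonsingular by (\ref{gamdet}).
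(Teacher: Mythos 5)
Your proposal is correct and follows essentially the same route as the paper: induction on $k$, with the base case obtained by applying (\ref{EmhoE}) and then Lemma~\ref{lem:EOE} to $G\alpha_1\Pi\Theta G^*$, and the inductive step by applying Lemma~\ref{lem:EOE} to $G\alpha_{k+1}\gamma_k G^*$ and matching the conjugated factors against the recursion (\ref{alfbetgamnext}). Your explicit remark that (\ref{detTheta}) and (\ref{gamdet}) legitimize each invocation of Lemma~\ref{lem:EOE} is a point the paper leaves implicit.
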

\begin{proof}
We will prove the relation (\ref{pull}) by induction over $k\> 1$. Its validity for $k=1$  is verified  by
\begin{align*}
    G\alpha_1\Pi
    G\mho G^*
    & =
    G
    \alpha_1 \Pi\Theta
    G^*
    \overbrace{\Theta^{-1}\mho \Theta^{-1} }^{\beta_1}
    G\Theta
  =
  G\alpha_1\Pi
  \Theta G^*  \beta_1 G\Theta\\
  & =
  \gamma_1
  G^*
  \underbrace{\gamma_1^{-1}\alpha_1\Pi  \Theta \gamma_1^{-1}}_{\beta_2}
  G
  \underbrace{\gamma_1\beta_1}_{\alpha_2}
  G\Theta
  =
  \gamma_1
  G^*
  \beta_2
  G
  \alpha_2 G\alpha_1,
\end{align*}
where the first equality uses (\ref{EmhoE}), the second equality applies (\ref{GUG}) of  Lemma~\ref{lem:EOE} to $G\alpha_1 \Pi
  \Theta G^*$, and use is also made of the matrices $\alpha_2$, $\beta_2$,  $\gamma_1$ from (\ref{alfbetgamnext}) along with (\ref{alfbetgam0}). Now, if (\ref{pull}) is already proved for some $k\> 1$, then
  \begin{align*}
  \nonumber
      \Big(
    \lprod_{j=1}^{k+1}&
    G\alpha_j
    \Big)
    \Pi
    G\mho G^*
    =
    G\alpha_{k+1}
      \Big(
    \lprod_{j=1}^k
    G\alpha_j
    \Big)
    \Pi
    G\mho G^*
    =
    G \alpha_{k+1}
    \gamma_k
    G^*
    \beta_{k+1}
    \lprod_{j=1}^{k+1}
    G \alpha_j \\
    & =
    \gamma_{k+1}
    G^*
    \underbrace{\gamma_{k+1}^{-1}\alpha_{k+1}\gamma_k \gamma_{k+1}^{-1}}_{\beta_{k+2}}
    G
    \underbrace{\gamma_{k+1}\beta_{k+1}}_{\alpha_{k+2}}
    \lprod_{j=1}^{k+1}
    G \alpha_j
    =
    \gamma_{k+1}
    G^*
    \beta_{k+2}
    \lprod_{j=1}^{k+2}
    G \alpha_j,
\end{align*}
where Lemma~\ref{lem:EOE} and (\ref{alfbetgamnext}) are applied again, proving (\ref{pull}) for the next value $k+1$ and completing the induction step.
\end{proof}

The relation (\ref{pull}) (and its inductive proof) can be interpreted as ``pulling'' the factor $G^*$ leftwards through the product of the $G$ factors (and constant matrices between them) until $G^*$ is to the left of all the $G$ factors. This procedure is used in the proof of the  following lemma, which is  similar to \cite[Theorem 1]{VP_2022_JFI} and employs a sequence of transfer  functions
\begin{equation}
\label{GkC}
    G_k(s)
    :=
    \left\{
    {\begin{matrix}
    I_r &   {\rm if}\  k=0\\
    \Big(
        \lprod_{j=1}^k
    G(s)\alpha_j
    \Big)
    \cC^\rT
& {\rm if}\  k \> 1
    \end{matrix}}
    \right.,
    \qquad
    s \in \mC,
\end{equation}
defined
in terms of $G$ from (\ref{F0_G}) and the matrices $\alpha_k$ from (\ref{alfbetgamnext}),  (\ref{alfbetgam0}) and taking values in $\mC^{(n+\nu)\x r}$ for $k\> 1$.

\begin{lemma}
\label{lem:EOEk}
The powers of the function $\Psi$ in (\ref{Phi0_Psi0}) are factorized in terms of (\ref{GkC}) as
\begin{equation}
\label{GOGk}
    \Psi(\lambda)^k
    =
    (-1)^k
    G_k(i\lambda)^*
    \beta _k
    G_k(i\lambda),
    \qquad
    \lambda \in \mR,
    \
    k \> 0,
\end{equation}
where $\beta_k$ are the matrices given by (\ref{alfbetgamnext})--(\ref{bet0}) subject to the condition (\ref{gamdet}). 
\end{lemma}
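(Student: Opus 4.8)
The plan is to prove the factorization (\ref{GOGk}) by induction on $k\ge 0$, the driving mechanism being a repeated application of the ``pulling'' identity of Lemma~\ref{lem:pull}, primed by the seed identity (\ref{EmhoE}) (which yields (\ref{Psi1})); the argument parallels that of \cite[Theorem~1]{VP_2022_JFI}. The case $k=0$ is trivial, since (\ref{GOGk}) then reads $\Psi^0=I_r=G_0$ by (\ref{GkC}). For $k=1$, using $\alpha_1=\Theta$ and $\beta_1=\Theta^{-1}\mho\Theta^{-1}$ from (\ref{alfbetgam0}) and $G_1=G\Theta\cC^\rT$, the antisymmetry of $\Theta$ gives $G_1^*=\cC\Theta^\rT G^*=-\cC\Theta G^*$, so that $-G_1^*\beta_1 G_1=\cC\Theta G^*\beta_1 G\Theta\cC^\rT$, which is precisely (\ref{Psi1}); thus (\ref{GOGk}) holds for $k=1$.

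For the inductive step, assume (\ref{GOGk}) for some $k\ge 1$. From the power formula (\ref{Psik}) I would write $\Psi^{k+1}=\cC\,(G\mho G^*)(\Pi G\mho G^*)^{k}\cC^\rT$ and rewrite the leftmost block by (\ref{EmhoE}) as $G\mho G^*=\alpha_1 G^*\beta_1(G\alpha_1)$. Lemma~\ref{lem:pull} is then applied successively for $\ell=1,\dots,k$: each application converts one remaining factor $\Pi G\mho G^*$ by replacing $\big(\lprod_{j=1}^{\ell}G\alpha_j\big)\Pi G\mho G^*$ with $\gamma_\ell G^*\beta_{\ell+1}\lprod_{j=1}^{\ell+1}G\alpha_j$. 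After these $k$ steps the product telescopes to
\[
    \Psi^{k+1}
    =
    \cC\,\alpha_1 G^*\,\beta_1\gamma_1 G^*\,\beta_2\gamma_2 G^*\cdots\beta_k\gamma_k G^*\,\beta_{k+1}\Big(\lprod_{j=1}^{k+1}G\alpha_j\Big)\cC^\rT .
\]

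To conclude, I would eliminate each interior product $\beta_\ell\gamma_\ell$ via the identity $\beta_\ell\gamma_\ell=-\alpha_{\ell+1}^\rT$, which follows from $\alpha_{\ell+1}=\gamma_\ell\beta_\ell$ in (\ref{alfbetgamnext}) combined with the opposite-symmetry properties $\beta_\ell^\rT=(-1)^\ell\beta_\ell$, $\gamma_\ell^\rT=-(-1)^\ell\gamma_\ell$ from (\ref{betgam+-}) of Lemma~\ref{lem:alfbetgam}. This extracts a factor $(-1)^k$ and converts the display into $(-1)^k\,\cC\,\alpha_1 G^*\,\alpha_2^\rT G^*\cdots\alpha_{k+1}^\rT G^*\,\beta_{k+1}\,G_{k+1}$. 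Since $\cC\,\alpha_1^\rT G^*\,\alpha_2^\rT G^*\cdots\alpha_{k+1}^\rT G^*=G_{k+1}^*$ by (\ref{GkC}) --- the conjugate transpose reversing the leftwards-ordered product defining $G_{k+1}$ --- and since $\alpha_1^\rT=-\Theta=-\alpha_1$ absorbs the last remaining sign, one arrives at $\Psi^{k+1}=(-1)^{k+1}G_{k+1}^*\beta_{k+1}G_{k+1}$, which closes the induction. Throughout, the nonsingularity of $\Theta$ in (\ref{detTheta}) and the standing assumption (\ref{gamdet}) guarantee that (\ref{EmhoE}) and every invocation of Lemmas~\ref{lem:EOE} and \ref{lem:pull} are legitimate.

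I expect the only real difficulty to be the sign and index bookkeeping: tracking how the $k$ applications of Lemma~\ref{lem:pull} interleave the constant matrices $\beta_\ell$, $\gamma_\ell$ with the $G^*$ factors, and verifying that the single sign flip from the antisymmetry of $\Theta=\alpha_1$ together with the $k$ sign flips coming from $\beta_\ell\gamma_\ell=-\alpha_{\ell+1}^\rT$ combine to exactly $(-1)^{k+1}$, in agreement with the order-reversal that realizes $G_{k+1}^*$. None of the individual computations is deep, but the indexing must be handled with care, in particular the factor $\Pi^{\delta_{k1}}$ entering (\ref{alfbetgamnext}) only at the first pulling step.
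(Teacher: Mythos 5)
Your proposal is correct and follows essentially the same route as the paper: both rest on the seed identity (\ref{EmhoE})/(\ref{Psi1}), repeated use of the pulling relation of Lemma~\ref{lem:pull}, the sign identity $\beta_\ell\gamma_\ell=-\alpha_{\ell+1}^\rT$ from Lemma~\ref{lem:alfbetgam}, and the antisymmetry of $\alpha_1=\Theta$ to assemble $G_{k+1}^*$, with matching sign bookkeeping $(-1)^{k+1}$. The only (immaterial) difference is that you unroll all $k$ applications of Lemma~\ref{lem:pull} in one telescoping display instead of invoking the induction hypothesis $\Psi^{k+1}=\Psi^k\Psi=(-1)^kG_k^*\beta_kG_k\Psi$ and applying the pulling relation just once per step, as the paper does.
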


\begin{proof}
While the relation (\ref{GOGk}) holds trivially for $k=0$ due to (\ref{bet0}), (\ref{GkC}), we will prove it by induction on $k\> 1$.
For $k=1$, its validity  follows from (\ref{Psi1}), (\ref{alfbetgam0}) as
$
    \Psi
  =
  \cC \Theta G^*  \Theta^{-1}\mho \Theta^{-1} G\Theta \cC^\rT
  =
  -\underbrace{\cC  \alpha_1^\rT G^*}_{G_1^*} \beta_1 \underbrace{G \alpha_1 \cC^\rT}_{G_1}
  =
  -G_1^* \beta_1 G_1
$
in view of (\ref{GkC}) and the antisymmetry of the CCR matrix $\Theta$ in (\ref{Theta12_XCCR}). Now, suppose (\ref{GOGk}) is already proved for some $k\> 1$.
Then the next power of the matrix $\Psi(\lambda)$ takes the form
\begin{align*}
    \Psi^{k+1}
    & =
    \Psi^k \Psi
    =
    (-1)^k G_k^* \beta_k G_k \Psi
    =
    (-1)^k G_k^* \beta_k
    \Big(
    \lprod_{j=1}^k
    G \alpha_j
    \Big)
    \Pi
    G \mho G^* \cC^\rT\\
    & =
    (-1)^k
    G_k^*
    \underbrace{\beta_k \gamma_k}_{-\alpha_{k+1}^\rT}
    G^*
    \beta_{k+1}
    \Big(
    \lprod_{j=1}^{k+1}
    G \alpha_j
    \Big)
    \cC^\rT
    =
    (-1)^{k+1}
    G_{k+1}^*
    \beta_{k+1}
    G_{k+1},
\end{align*}
where (\ref{pull}) of Lemma~\ref{lem:pull} is applied along with (\ref{GkC}), (\ref{alfbetgamnext}) and the symmetric properties (\ref{betgam+-}), thereby completing the induction step.
\end{proof}

The following theorem is a corollary of Lemma~\ref{lem:EOEk} and, similarly to \cite[Theorem~2]{VP_2022_JFI},   establishes the first factorization in (\ref{cFcG}) with the transfer function
\begin{equation}
\label{cF}
    \cF
    :=
    \begin{bmatrix}
       G_0\\
      G_1\\
      G_2\\
      G_3\\
      \vdots
    \end{bmatrix}
    =
    \begin{bmatrix}
      I_r\\
      G \alpha_1 \cC^\rT\\
      G \alpha_2 G \alpha_1 \cC^\rT\\
      G \alpha_3 G \alpha_2 G \alpha_1 \cC^\rT\\
      \vdots
    \end{bmatrix}
    =
    \left[{\small\begin{array}{cccc|c}
  \cA   & 0 & 0 &\ldots & \alpha_1 \cC^\rT\\
  \alpha_2  & \cA & 0 & \ldots  & 0\\
  0   & \alpha_3 & \cA & \ldots  & 0\\
  \vdots    &\vdots   & \vdots  & \ddots  &\vdots  \\
  \hline
  0 & 0 & 0 & \ldots  & I_r\\
  I_{n+\nu} & 0 & 0 & \ldots  & 0\\
  0 & I_{n+\nu} & 0 & \ldots  & 0\\
  0 & 0 & I_{n+\nu} & \ldots  & 0\\
  \vdots & \vdots & \vdots & \ddots  & \vdots
\end{array}}\right]
\end{equation}
which uses (\ref{GkC})
(with the horizontal and vertical lines separating the state-space realization matrices),  and the matrix
\begin{equation}
\label{cH}
    \cH_\theta
    :=
    \diag(I_r,
        \diag_{k\> 1}
        ((-2i\theta)^k
        \phi_k \beta_k)),
\end{equation}
defined in terms of the coefficients $\phi_k$ from  (\ref{phi}) and the matrices $\beta_k$ from (\ref{alfbetgamnext})--(\ref{bet0}).  Note that $\cH_\theta$ in (\ref{cH})  is Hermitian due to the first equality in (\ref{betgam+-}) of Lemma~\ref{lem:alfbetgam}.

 \begin{theorem}
\label{th:Delta}
Suppose the condition (\ref{gamdet}) is satisfied. Then the transfer function $\cF$ in   (\ref{cF}) and the matrix $\cH_\theta$ in (\ref{cH}) deliver the first factorization in (\ref{cFcG}).
\end{theorem}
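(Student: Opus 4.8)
The plan is to expand $\phi(2i\theta\Psi(\lambda))$ into its (everywhere convergent) power series and factorize each power of $\Psi$ by Lemma~\ref{lem:EOEk}. Writing, from (\ref{phi}), (\ref{phiseries}),
\[
    \phi(2i\theta\Psi(\lambda))
    =
    \sum_{k=0}^{+\infty}
    \phi_k (2i\theta)^k \Psi(\lambda)^k ,
\]
I would substitute (\ref{GOGk}) to obtain, for every $k\>1$,
\[
    \phi_k (2i\theta)^k \Psi(\lambda)^k
    =
    \phi_k (2i\theta)^k (-1)^k\, G_k(i\lambda)^* \beta_k\, G_k(i\lambda)
    =
    G_k(i\lambda)^*\big((-2i\theta)^k \phi_k \beta_k\big) G_k(i\lambda),
\]
using $(2i\theta)^k(-1)^k = (-2i\theta)^k$ and that scalars commute with matrices, while the $k=0$ term is $\phi_0 I_r = I_r = G_0(i\lambda)^* I_r\, G_0(i\lambda)$ since $\phi_0 = 1$ and $G_0 = I_r$ in (\ref{GkC}). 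Summation over $k\>0$ then gives
\[
    \phi(2i\theta\Psi(\lambda))
    =
    G_0(i\lambda)^* I_r\, G_0(i\lambda)
    +
    \sum_{k=1}^{+\infty}
    G_k(i\lambda)^*\big((-2i\theta)^k \phi_k \beta_k\big) G_k(i\lambda).
\]

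The second step is to recognize the right-hand side as $\cF(i\lambda)^*\cH_\theta\cF(i\lambda)$: with $\cF$ the column stack of the $G_k$'s as in (\ref{cF}) and $\cH_\theta$ the block-diagonal matrix in (\ref{cH}), whose $0$th diagonal block is $I_r$ and whose $k$th diagonal block ($k\>1$) is $(-2i\theta)^k\phi_k\beta_k$, the product $\cF(i\lambda)^*\cH_\theta\cF(i\lambda)$ is precisely $\sum_{k\>0} G_k(i\lambda)^*(\cH_\theta)_{kk}G_k(i\lambda)$, which matches the displayed series; this establishes the first factorization in (\ref{cFcG}). The Hermitian symmetry is consistent, since $\phi_k>0$, $\overline{(-2i\theta)^k}=(-1)^k(-2i\theta)^k$ and $\beta_k^\rT=(-1)^k\beta_k$ by (\ref{betgam+-}) make each block $(-2i\theta)^k\phi_k\beta_k$ Hermitian, as was already noted after (\ref{cH}).

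Two points then need attention. (i) \emph{Convergence and the meaning of the infinite-dimensional objects.} Since $\phi$ is entire and $\|\Psi(\lambda)^k\|\<\|\Psi(\lambda)\|^k$, the matrix series for $\phi(2i\theta\Psi(\lambda))$ converges absolutely at every $\lambda\in\mR$, being dominated by the scalar series $\sum_k\phi_k(2\theta\|\Psi(\lambda)\|)^k=\phi(2\theta\|\Psi(\lambda)\|)$; rewriting each term via Lemma~\ref{lem:EOEk} leaves its value unchanged, so the regrouping into $\cF^*\cH_\theta\cF$ is a legitimate, absolutely convergent series of $(r\x r)$-matrices, and this is the sense in which $\cF(i\lambda)$, $\cH_\theta$ and the products $\cH_\theta\cF$, $\cF^*(\cdot)$ are to be understood (applied blockwise). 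This relies on the standing assumption (\ref{gamdet}), which keeps the recursion (\ref{alfbetgamnext}) --- hence Lemma~\ref{lem:EOEk} --- valid for all $k$. (ii) \emph{The realization.} One has to check that the state-space array displayed on the right of (\ref{cF}) realizes the transfer functions (\ref{GkC}): inverting the block lower-bidiagonal state matrix (diagonal blocks $sI_{n+\nu}-\cA$, subdiagonal blocks $-\alpha_{k+1}$) block by block yields a block lower-triangular transfer matrix with $(k,j)$-block $G(s)\alpha_k G(s)\cdots G(s)\alpha_{j+1}G(s)$ for $k\>j$ and $G(s)$ on the diagonal; composing this with the input block $\alpha_1\cC^\rT$, the downward-shift output map and the feedthrough $I_r$ in the $0$th output reproduces $G_k(s)=(\lprod_{j=1}^k G(s)\alpha_j)\cC^\rT$ as the $k$th component, exactly as in \cite[Theorem~2]{VP_2022_JFI}.

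The main obstacle I anticipate is not any individual computation but the bookkeeping that makes the infinite cascade $\cF$ and the infinite block-diagonal weight $\cH_\theta$ precise and justifies treating $\cF(i\lambda)^*\cH_\theta\cF(i\lambda)$ as a convergent matrix series; with that framework in place, the theorem reduces to a termwise application of Lemma~\ref{lem:EOEk} followed by the identification of block structure carried out above.
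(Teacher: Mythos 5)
Your proposal is correct and follows the same route as the paper: substitute the factorizations (\ref{GOGk}) of $\Psi^k$ from Lemma~\ref{lem:EOEk} into the power series (\ref{phiseries}), absorb the sign via $(2i\theta)^k(-1)^k=(-2i\theta)^k$, and identify the resulting block sum with $\cF(i\lambda)^*\cH_\theta\cF(i\lambda)$ using (\ref{cF}), (\ref{cH}). Your additional remarks on absolute convergence and on verifying the state-space realization in (\ref{cF}) are sound elaborations of points the paper leaves implicit, not a different argument.
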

\begin{proof}
By substituting the factorizations  (\ref{GOGk}) into the series (\ref{phiseries}) and using $\phi_0 = 1$ along with (\ref{cF}), (\ref{cH}), it follows that
$
    \phi(2i\theta \Psi(\lambda))
     =
    \sum_{k=0}^{+\infty}
    \phi_k
    (2i\theta
    \Psi(\lambda)
    )^k
    =
    \sum_{k=0}^{+\infty}
    \phi_k
    (-2i\theta)^k
    G_k(i\lambda)^*
    \beta _k
    G_k(i\lambda)
    =
    \cF(i\lambda)^*\cH_\theta \cF(i\lambda)
$
for any $\lambda \in \mR$,
thus proving the first equality in (\ref{cFcG}).
\end{proof}

The transfer function $\cF$ in (\ref{cF}),   which provides the spectral factorisation in (\ref{cFcG}),  is implemented as an infinite cascade of classical linear time invariant (LTI)  systems shown in Fig.~\ref{fig:cF}.
\begin{figure}[htbp]
\centering
\unitlength=0.5mm
\linethickness{0.6pt}
\begin{picture}(170.00,37.00)
    \put(5,13){\dashbox(165,20)[cc]{}}
    \put(7,31){\makebox(0,0)[lt]{{\small$\cF$}}}
    \put(150,18){\framebox(10,10)[cc]{\small$\cC^\rT$}}
    \put(130,18){\framebox(10,10)[cc]{\small$\alpha_1$}}
    \put(110,18){\framebox(10,10)[cc]{\small$G$}}
    \put(90,18){\framebox(10,10)[cc]{\small$\alpha_2$}}
    \put(70,18){\framebox(10,10)[cc]{\small$G$}}
    \put(50,18){\framebox(10,10)[cc]{\small$\alpha_3$}}
    \put(30,18){\framebox(10,10)[cc]{\small$G$}}

    \put(183,23){\makebox(0,0)[cc]{{\small$v$}}}

    \put(180,23){\vector(-1,0){20}}
    \put(150,23){\vector(-1,0){10}}
    \put(130,23){\vector(-1,0){10}}
    \put(110,23){\vector(-1,0){10}}
    \put(90,23){\vector(-1,0){10}}
    \put(70,23){\vector(-1,0){10}}
    \put(50,23){\vector(-1,0){10}}
    \put(30,23){\vector(-1,0){10}}
    \put(15,23){\makebox(0,0)[cc]{{$\cdots$}}}
    \put(15,3){\makebox(0,0)[cc]{{$\cdots$}}}

    \put(165,0){\makebox(0,0)[cc]{{\small$v$}}}
    \put(105,0){\makebox(0,0)[cc]{{\small$z_1$}}}
    \put(65,0){\makebox(0,0)[cc]{{\small$z_2$}}}
    \put(25,0){\makebox(0,0)[cc]{{\small$z_3$}}}
    \put(165,23){\vector(0,-1){20}}
    \put(105,23){\vector(0,-1){20}}
    \put(65,23){\vector(0,-1){20}}
    \put(25,23){\vector(0,-1){20}}
\end{picture}
\caption{An infinite cascade of LTI systems with the common transfer function $G$ from (\ref{F0_G}) (and the matrices $\cC^\rT$,   $\alpha_k$ from (\ref{Z_U_cC}),   (\ref{alfbetgamnext}), (\ref{alfbetgam0}) as intermediate static factors) which form a system with the transfer  function $\cF$ in  (\ref{cF}).
}
\label{fig:cF}
\end{figure}
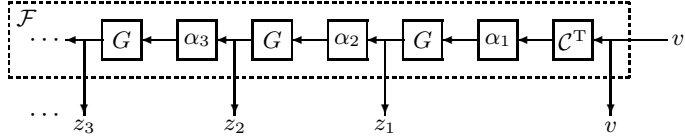
This cascaded system has an $\mR^r$-valued input $v$ and an $\mR^r \x (\mR^{(n+\nu)})^\infty$-valued output $(v,z_1, z_2, z_3, \ldots)$, where $z_k$ is the $\mR^{n+\nu}$-valued internal state of the $k$th subsystem $G$ in this cascade.
A similar infinite cascade state-space realization holds for the transfer function
\begin{equation}
\label{cFFS}
    \cF FS
    :=
    \begin{bmatrix}
        I_r\\
      G_1\\
      G_2\\
      \vdots
    \end{bmatrix}
    FS
    =
    \begin{bmatrix}
      \cC G \\
      G \alpha_1 \Pi G \\
      G \alpha_2 G \alpha_1 \Pi G \\
      \vdots
    \end{bmatrix}
    \cB S
    =
        \left[
    \begin{array}{c|c}
        \sA& \sB \\
        \hline
        \sC & 0
    \end{array}
    \right],
\end{equation}
which is obtained by combining (\ref{F0_G}) with (\ref{cF}), (\ref{CC})  and has
the state-space matrices
\begin{equation}
\label{sABC}
  \sA
  :=
  \begin{bmatrix}
  \cA   & 0 & 0 &\ldots\\
  \alpha_1\Pi  & \cA & 0 & \ldots\\
  0   & \alpha_2 & \cA & \ldots\\
  \vdots    &\vdots   & \vdots  & \ddots \\
  \end{bmatrix},
  \quad
  \sB
  :=
  \begin{bmatrix}
    \cB S\\
    0\\
    0\\
    \vdots
  \end{bmatrix},
  \quad
  \sC
  :=
  \begin{bmatrix}
  \cC & 0 & 0 & \ldots\\
  0 & I_{n+\nu} & 0 & \ldots\\
  0 & 0 & I_{n+\nu} & \ldots\\
  \vdots & \vdots & \vdots & \ddots
  \end{bmatrix},
\end{equation}
where $\sB \in \mC^{\infty\x m}$.
Now, consider the following algebraic Riccati equation (ARE) with respect to a matrix $\sQ_\theta \in \mH_\infty$:
\begin{equation}
\label{ARE}
    \sA^\rT \sQ_\theta + \sQ_\theta\sA
    + \theta \sC^\rT \cH_\theta \sC
    +
    \sL_\theta^* \sL_\theta = 0,
    \qquad
    \sL_\theta
    :=
    \sB^* \sQ_\theta,
\end{equation}
where $\cH_\theta$ is given by (\ref{cH}), and  $\sL_\theta^* \sL_\theta = \sQ_\theta \sB \sB^* \sQ_\theta$, with $\sB \sB^* = \diag(\cB \Omega^\rT \cB^\rT, 0) \in \mH_\infty^+$ in view of the structure of the matrix $\sB$ in (\ref{sABC}) and by (\ref{Oroot}). Its solution $\sQ_\theta$ is said to be
\emph{stabilizing} if the spectrum of the infinite matrix $\sA + \sB \sL_\theta$ is in the left half-plane, thus giving rise to a $\mC^{m\x m}$-valued transfer function
\begin{equation}
\label{cG}
    \cG_\theta
    :=
    \left[
    \begin{array}{c|c}
        \sA +\sB \sL_\theta & \sB \\
        \hline
        \sL_\theta & I_m
    \end{array}
    \right],
\end{equation}
analytic in the right-half plane. The state-space realization (\ref{cG}) is shown in Fig.~\ref{fig:cG}.   
\begin{figure}[htbp]
\unitlength=0.7mm
\linethickness{0.6pt}
\centering
\begin{picture}(80,35.00)
    \put(25,0){\dashbox(40,35)[cc]{}}
    \put(27,33){\makebox(0,0)[lt]{{\small$\cG_\theta$}}}

    \put(30,5){\framebox(10,10)[lc]{\scriptsize$\, \cF\! F\! S$}}
    \put(55,25){\circle{6}}
    \put(55,25){\makebox(0,0)[cc]{\small$+$}}
    \put(50,5){\framebox(10,10)[cc]{\scriptsize$\sL_\theta$}}
    \put(55,15){\vector(0,1){7}}
    \put(39,10){\makebox(0,0)[cc]{\tiny$\bullet$}}
    \put(35,25){\vector(0,-1){10}}
    \put(39,10){\vector(1,0){11}}
    \put(72,25){\vector(-1,0){14}}
    \put(52,25){\vector(-1,0){35}}

    \put(13,25){\makebox(0,0)[cc]{\small$z$}}
    \put(75,25){\makebox(0,0)[cc]{\small$v$}}
\end{picture}
\caption{An implementation of the transfer function $\cG_\theta$ from (\ref{cG}) with an input $v$ and output $z$ (both  $\mC^m$-valued). The internal state of the system $\cF F S$ from (\ref{cFFS}) is weighted by a static matrix $\sL_\theta \in \mC^{m\x \infty}$ and enters the system  in a mixture with the input $v$.
}
\label{fig:cG}
\end{figure}
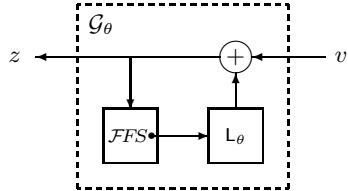

\begin{theorem}
\label{th:cG}
The transfer function $\cG_\theta$ in (\ref{cG}), specified by (\ref{sABC}) and the stabilizing  solution $\sQ_\theta$ of the ARE (\ref{ARE}),   satisfies the second factorization in (\ref{cFcG}).
\end{theorem}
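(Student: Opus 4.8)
The plan is to reduce the asserted identity $\Delta_\theta(\lambda)^{-1}=\cG_\theta(i\lambda)\cG_\theta(i\lambda)^*$ to a Kalman--Yakubovich--Popov type calculation on the infinite cascade realization (\ref{cFFS}), (\ref{sABC}), with the ARE (\ref{ARE}) supplying the crucial algebraic relation. First I would use Theorem~\ref{th:Delta}: substituting the first factorization $\phi(2i\theta\Psi)=\cF^*\cH_\theta\cF$ from (\ref{cFcG}) into the definition (\ref{Sig}) of $\Sigma_\theta$ and recalling that $S$ in (\ref{Oroot}) is Hermitian gives $\Sigma_\theta=(\cF FS)^*\cH_\theta(\cF FS)$, where by (\ref{cFFS}), (\ref{sABC}) the transfer function $\cF FS$ has the realization $\sC\fG(s)\sB$ with $\fG(s):=(sI-\sA)^{-1}$; since $\sA$ is block lower triangular with the Hurwitz matrix $\cA$ repeated on its diagonal, its spectrum lies in the open left half-plane, so $\fG$ is well defined and bounded on the imaginary axis. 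Hence $\Delta_\theta=I_m-\theta(\cF FS)^*\cH_\theta(\cF FS)$ by (\ref{Del}), and it suffices to prove $\cG_\theta(i\lambda)^{-*}\cG_\theta(i\lambda)^{-1}=\Delta_\theta(\lambda)$, which is exactly the weighted isometry (\ref{cKinner}) for the transfer function $\cK_\theta$ of (\ref{cK}), i.e.\ the content of Lemma~\ref{lem:inner}.

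Next I would read off $\cG_\theta^{-1}$ from the realization (\ref{cG}). Because its feedthrough term equals $I_m$, the standard state-space inversion formula gives
\begin{equation*}
    \cG_\theta(s)^{-1}
    =
    \left[\begin{array}{c|c} \sA & \sB \\ \hline -\sL_\theta & I_m \end{array}\right]
    =
    I_m - \sL_\theta\fG(s)\sB ,
\end{equation*}
whose pole matrix $\sA$ is again spectrally equal to the Hurwitz $\cA$, so $\cG_\theta^{-1}$ is analytic in the closed right half-plane. I would then use the elementary resolvent identity, valid for any $Y\in\mH_\infty$ and all $\lambda\in\mR$,
\begin{equation*}
    -\fG(i\lambda)^*\big(\sA^\rT Y + Y\sA\big)\fG(i\lambda)
    =
    Y\fG(i\lambda) + \fG(i\lambda)^* Y ,
\end{equation*}
which follows from $\sA\fG(i\lambda)=i\lambda\fG(i\lambda)-I$ and $\fG(i\lambda)^*\sA^\rT=-i\lambda\fG(i\lambda)^*-I$.

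Applying this identity with $Y:=\sQ_\theta$ and replacing $\sA^\rT\sQ_\theta+\sQ_\theta\sA$ by $-\theta\sC^\rT\cH_\theta\sC-\sQ_\theta\sB\sB^*\sQ_\theta$ from the ARE (\ref{ARE}) (using $\sL_\theta=\sB^*\sQ_\theta$, hence $\sL_\theta^*\sL_\theta=\sQ_\theta\sB\sB^*\sQ_\theta$), then pre- and post-multiplying the resulting equality by $\sB^*$ and $\sB$ and abbreviating $L:=\sL_\theta\fG(i\lambda)\sB$ (so that $\cG_\theta(i\lambda)^{-1}=I_m-L$ and $\sB^*\fG(i\lambda)^*\sC^\rT\cH_\theta\sC\fG(i\lambda)\sB=\Sigma_\theta(\lambda)$), I would obtain $\theta\Sigma_\theta+L^*L=L+L^*$. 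Rearranging, $\theta\Sigma_\theta=L+L^*-L^*L=I_m-(I_m-L)^*(I_m-L)=I_m-\cG_\theta^{-*}\cG_\theta^{-1}$, i.e.\ $\cG_\theta^{-*}\cG_\theta^{-1}=I_m-\theta\Sigma_\theta=\Delta_\theta$; taking inverses yields the second factorization in (\ref{cFcG}). The main obstacle is not this algebra but the functional-analytic bookkeeping for the infinite block matrices: one has to make sure that the ARE (\ref{ARE}) admits a bounded stabilizing solution $\sQ_\theta\in\mH_\infty$, so that $\sA+\sB\sL_\theta$ has spectrum in the open left half-plane, the products and cascade-generated series above converge in operator norm, $\sB\sB^*=\diag(\cB\Omega^\rT\cB^\rT,0)\in\mH_\infty^+$, and $\cG_\theta$ is consequently analytic and boundedly invertible on and to the right of the imaginary axis; these points are settled as in the analogous spectral-factorization construction of \cite{VP_2022_JFI}.
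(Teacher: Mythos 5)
Your proposal is correct and follows essentially the same route as the paper: both arguments read off the realization of $\cG_\theta^{-1}$ from (\ref{cG}), substitute the ARE (\ref{ARE}) into the standard resolvent (KYP-type) identity for $\sQ_\theta$, and reduce the claim to the weighted isometry of Lemma~\ref{lem:inner}. The only cosmetic difference is that the paper packages the computation as $\cK_\theta(i\lambda)^*\mathrm{diag}(\cH_\theta,I_m)\cK_\theta(i\lambda)=I_m$ via the realization (\ref{cK1}), whereas you expand $(I_m-L)^*(I_m-L)$ directly — the underlying algebra $\theta\Sigma_\theta+L^*L=L+L^*$ is identical.
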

\begin{proof}
From (\ref{cG}) (see also Fig.~\ref{fig:cG}), it follows that the inverse transfer matrix $\cG_\theta^{-1}$ has the state-space realization
\begin{equation}
\label{cGinv}
    \cG_\theta^{-1}
    =
    \left[
    \begin{array}{c|c}
        \sA & \sB \\
        \hline
        -\sL_\theta & I_m
    \end{array}
    \right].
\end{equation}
Since (\ref{cFFS}) and (\ref{cGinv}) share the input-state dynamics, the transfer function (\ref{cK}) is realized as
\begin{equation}
\label{cK1}
    \cK_\theta
    =
    \left[
    \begin{array}{c|c}
      \sA & \sB\\
      \hline
      \sqrt{\theta} \sC & 0\\
      -\sL_\theta & I_m
    \end{array}
    \right],
\end{equation}
with the solution  $\sQ_\theta$ of the ARE  (\ref{ARE}) being a weighted observability Gramian for  (\ref{cK1}).
For any $\lambda \in \mR$, the transfer matrix $\cK_\theta(i\lambda) = {\small\begin{bmatrix}
  \sqrt{\theta}\sC \\
  - \sL_\theta
\end{bmatrix}}(i\lambda I_\infty - \sA)^{-1}\sB + {\small\begin{bmatrix}
  0\\
  I_m
\end{bmatrix}}$ satisfies
\begin{align*}
    \cK_\theta(i\lambda)^*&
    \begin{bmatrix}
      \cH_\theta & 0\\
      0 & I_m
    \end{bmatrix}
    \cK_\theta(i\lambda)
    =
    -\sB^*(i\lambda I_\infty + \sA^\rT)^{-1}
    (\theta \sC^\rT \cH_\theta \sC
    +
    \sL_\theta^* \sL_\theta)
    (i\lambda I_\infty - \sA)^{-1}\sB\\
     &
    +
    \sB^*(i\lambda I_\infty + \sA^\rT)^{-1}
    \sL_\theta^*
    -\sL_\theta
    (i\lambda I_\infty - \sA)^{-1}\sB
    +I_m\\
    = &
    \sB^*(i\lambda I_\infty + \sA^\rT)^{-1}
    (\sA^\rT \sQ_\theta + \sQ_\theta\sA)
    (i\lambda I_\infty - \sA)^{-1}\sB
    +
    \sB^*(i\lambda I_\infty + \sA^\rT)^{-1}
    \sL_\theta^*
    -\sL_\theta
    (i\lambda I_\infty - \sA)^{-1}\sB
    +I_m    \\
     = &
    \sB^*(i\lambda I_\infty + \sA^\rT)^{-1}
    (\sL_\theta^*-\sQ_\theta \sB)
    +
    (\sB^*\sQ_\theta-\sL_\theta)
    (i\lambda I_\infty - \sA)^{-1}\sB
    +I_m
    =
    I_m
\end{align*}
in view of (\ref{ARE}),   thus leading to (\ref{cKinner}) (here, $I_\infty$ is the infinite identity matrix). Therefore, by Lemma~\ref{lem:inner}, the transfer function $\cG_\theta$ in (\ref{cG}) delivers the second factorization in (\ref{cFcG}).
\end{proof}

Note that the state-space realization of the factor $S \cG_\theta$ in (\ref{SDelS}) can be  obtained from (\ref{cG}) as
\begin{equation}
\label{ScG}
    S\cG_\theta
    =
    \left[
    \begin{array}{c|c}
        \sA +\sB \sL_\theta & \sB \\
        \hline
        S \sL_\theta & S
    \end{array}
    \right].
\end{equation}
In order to complete the state-space representation of the operator $\fM_{\theta,F}$ in  (\ref{fM}), consider its third matrix $\varpi_\theta(\lambda)$  in (\ref{varpitheta}). A combination of the second equality from (\ref{dfdalpha}) with the factorizations  (\ref{GOGk}), (\ref{SDelS}) yields
\begin{align}
\nonumber
    \varpi_\theta
     & =
     \sum_{j,k=0}^{+\infty}
     \phi_{j+k+1}
     (2i\theta)^{j+k}
     \Psi^j
     FS\Delta_\theta^{-1} SF^*
     \Psi^k
     =
     \sum_{j,k=0}^{+\infty}
     \phi_{j+k+1}
     (-2i\theta)^{j+k}
     G_j^*\beta_j G_j
     FS\Delta_\theta^{-1} SF^*
     G_k^*\beta_k G_k\\
\label{varpiseries}
     & =
     \sum_{j,k=0}^{+\infty}
     \phi_{j+k+1}
     (-2i\theta)^{j+k}
     G_j^*\beta_j G_j
     FS \cG_\theta  (G_k F S \cG_\theta  )^* \beta_k G_k.
\end{align}
Therefore, by substituting (\ref{cFcG}), (\ref{SDelS}), (\ref{varpiseries}) into (\ref{fM}), (\ref{fMF}), the core matrix in (\ref{chitheta}) takes the form
\begin{align}
\nonumber
    \chi_\theta
    =&
  \frac{1}{2\pi}
  \int_{\mR}
  \Re
  \fP
  \Big(
    \begin{bmatrix}
      G^* \cC^\rT \\
      I_r
    \end{bmatrix}
    \Big(\cF^* \cH_\theta \cF F S\cG_\theta (S\cG_\theta)^*\\
\label{chitheta1}
     & -
          \sum_{j,k=0}^{+\infty}
     \phi_{j+k+1}
     (-2i\theta)^{j+k+1}
     G_j^*\beta_j G_j
     FS \cG_\theta  (G_k F S \cG_\theta  )^* \beta_k G_k F J
     \Big)
    \begin{bmatrix}
      \cB^\rT G^* &I_m
    \end{bmatrix}
      \Big)
      \rd \lambda.
\end{align}
Despite the infinite number of  terms in (\ref{chitheta1}), each of them beyond the zero bottom-right $(r\x m)$-block of the core matrix (in view of the projection operator $\fP$)   is the real part of a mixed moment
\begin{equation}
\label{intF15}
    \bM(F_1, \ldots, F_5)
    :=
    \frac{1}{2\pi}
    \int_{\mR}
    F_1^* F_2 F_3^*F_4 F_5^*
    \rd \lambda
\end{equation}
of proper stable transfer functions $F_1, \ldots, F_5$ such that at least two of them are strictly proper, thus making the above integral convergent. Omitting the trailing identity matrices in (\ref{intF15}) for brevity, so that, for example,
\begin{equation}
\label{intF123}
    \bM(F_1, F_2, F_3)
    :=
    \frac{1}{2\pi}
    \int_{\mR}
    F_1(i\lambda)^* F_2(i\lambda) F_3(i\lambda)^*
    \rd \lambda
    =
    \bM(F_1, F_2, F_3, I, I)
\end{equation}
(where $I$ is an appropriately dimensioned identity matrix),
we note that the $(1,1)$-block of (\ref{chitheta1}) is represented as
\begin{align}
\nonumber
    (\chi_\theta)_{11}
    =&
  \Re
  \Big(
    \bM(\cF \cC G, \cH_\theta \cF F S\cG_\theta , G\cB S \cG_\theta)\\
\label{chitheta11}
     & -
          \sum_{j,k=0}^{+\infty}
     \phi_{j+k+1}
     (-2i\theta)^{j+k+1}
     \bM(G_j \cC G, \beta_j G_jFS \cG_\theta, G_k F S \cG_\theta, \beta_k G_k F J, G\cB)
      \Big).
\end{align}
The other nontrivial blocks $(\chi_\theta)_{12}$, $(\chi_\theta)_{21}$ of the core matrix admit similar representations.
Since the mixed products of transfer functions in (\ref{intF15}) have at most four alternations of stable and antistable factors, these integrals can be expressed in terms of Gramians as demonstrated in Appendix~\ref{sec:mixint}.
Similarly to \cite[Section 7]{VP_2022_JFI}, the infinite-dimensional systems $\cF$, $\cG_\theta$, $S\cG_\theta$ in (\ref{cF}), (\ref{cG}), (\ref{ScG})  can be used in practice (including the numerical solution of the ARE (\ref{ARE})) in the form of their truncations up to moderate orders due to the factorially fast decay of the coefficients $\phi_k$ in (\ref{phi}) which are present in the matrix $\cH_\theta$ in (\ref{cH}) and the series in (\ref{chitheta1}).

\section{Conclusion}
\label{sec:conc}

We have considered a risk-sensitive optimal control problem for a measurement-free coherent quantum feedback interconnection, where both the plant and the controller are OQHOs with field-mediated  coupling. In this setting,  the controller has to stabilize the closed-loop system and minimize the infinite-horizon growth rate of a quadratic-exponential penalty on the plant variables and the controller output variables in the invariant Gaussian state when the system is subject to vacuum quantum noises. In contrast to classical risk-sensitive control, the  QEF rate (as a cost functional in the quantum control problem) depends not only on the statistical properties of the criterion process but also on its two-point commutator kernel,  which is also affected by the choice of a coherent quantum controller  parameterized by the energy and coupling matrices   due to the PR constraints. We have obtained first-order necessary conditions of optimality in this class of controllers by computing the partial Frechet derivatives of the cost functional in frequency domain with respect to the controller parameters. This computation has been reduced to that of the core matrix, through which a particular form of the cost enters the derivatives.  The core matrix has been shown to play a key role in infinitesimal equivalence of the coherent quantum risk-sensitive and weighted CQLQG control problems. We have also outlined a state-space calculation of the core matrix using spectral factorizations in terms of infinite cascades of auxiliary classical systems. In truncated form, they are applicable to a gradient descent algorithm for numerical controller synthesis initialized with an optimal CQLQG  controller.

%

\appendix

\section{Differential identities for functions of matrices}\label{sec:matfun}
The computation of Frechet derivatives in Section~\ref{sec:opt} employs the following lemma,
which is similar to \cite[Theorem 4.11]{NH_1995} (see also \cite[Eq. (3.13)]{H_2008} and \cite{S_1987}) 
and is provided here for completeness.

\begin{lemma}
\label{lem:fmat'}
Suppose $f$ is an analytic  function on a domain (of the complex plane $\mC$) containing the spectrum of a matrix $\alpha \in \mC^{r\x r}$. Then
the Gateaux derivative $\d_\alpha f(\alpha)(\beta):= \lim_{\eps \to 0}(\frac{1}{\eps}(f(\alpha + \eps \beta)-f(\alpha)))$ in the direction $\beta \in \mC^{r\x r}$  can be computed as
\begin{equation}
\label{fmat'}
    \d_\alpha
    f(\alpha)(\beta)
     =
    \left(
    f
    (\sigma)
    \right)_{21},
\end{equation}
with $(\cdot)_{21}$ denoting the bottom left $( r\x  r)$-block  of the matrix $f(\sigma) \in \mC^{2r\x 2r}$, where
\begin{equation}
\label{sig}
        \sigma
        :=
        \begin{bmatrix}
      \alpha & 0\\
      \beta & \alpha
    \end{bmatrix}.
\end{equation}
The corresponding first variation $\delta f(\alpha) = \d_\alpha f(\alpha)(\delta\alpha)$  of $f(\alpha)$ with respect to $\alpha$ satisfies
\begin{equation}
\label{fab0}
  \Tr (\tau \delta f(\alpha))
  =
  \Tr
  (\d_\alpha f(\alpha)(\tau) \delta \alpha),
  \qquad
  \alpha, \delta\alpha, \tau \in \mC^{r\x r}.
\end{equation}
\end{lemma}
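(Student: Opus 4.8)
The plan is to establish the Gateaux-derivative formula (\ref{fmat'}) first for monomials $f(z)=z^k$ by a direct block computation, then extend it to polynomials by linearity and to an arbitrary analytic $f$ through the Cauchy integral form of the holomorphic functional calculus; the trace identity (\ref{fab0}) will then follow by recognizing $\d_\alpha f(\alpha)$ as a self-adjoint operator with respect to the nondegenerate symmetric bilinear trace pairing $(A,B)\mapsto\Tr(AB)$. For the monomial step, note that the matrix $\sigma$ in (\ref{sig}) is block lower triangular with both diagonal blocks equal to $\alpha$, so its spectrum coincides with that of $\alpha$ (with doubled multiplicities) and $f(\sigma)$ is well defined on the same domain as $f(\alpha)$. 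An induction on $k\>1$ gives
\[
    \sigma^k
    =
    \begin{bmatrix}
      \alpha^k & 0\\
      \sum_{j=0}^{k-1}\alpha^{j}\beta\,\alpha^{k-1-j} & \alpha^k
    \end{bmatrix},
\]
whose bottom-left block is exactly $\frac{\rd}{\rd\eps}(\alpha+\eps\beta)^k\big|_{\eps=0}$, i.e. the Gateaux derivative of $z\mapsto z^k$ at $\alpha$ in the direction $\beta$; by linearity in $f$, (\ref{fmat'}) then holds for every polynomial.

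To pass to a general analytic $f$, I would use $f(\mu)=\frac{1}{2\pi i}\oint_{C} f(z)(zI-\mu)^{-1}\rd z$ with a contour $C$ enclosing $\mathrm{spec}(\alpha)=\mathrm{spec}(\sigma)$. Since
\[
    (zI_{2r}-\sigma)^{-1}
    =
    \begin{bmatrix}
      (zI_r-\alpha)^{-1} & 0\\
      (zI_r-\alpha)^{-1}\beta(zI_r-\alpha)^{-1} & (zI_r-\alpha)^{-1}
    \end{bmatrix},
\]
the bottom-left block of $f(\sigma)$ equals $\frac{1}{2\pi i}\oint_{C} f(z)(zI-\alpha)^{-1}\beta(zI-\alpha)^{-1}\rd z$. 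On the other hand, differentiating $\frac{1}{2\pi i}\oint_{C} f(z)(zI-\alpha-\eps\beta)^{-1}\rd z$ in $\eps$ at $\eps=0$ and using $\frac{\rd}{\rd\eps}(zI-\alpha-\eps\beta)^{-1}\big|_{\eps=0}=(zI-\alpha)^{-1}\beta(zI-\alpha)^{-1}$ gives the same expression for $\d_\alpha f(\alpha)(\beta)$, which proves (\ref{fmat'}); the interchange of $\frac{\rd}{\rd\eps}$ with the integral is legitimate by the uniform bound on the resolvent along the compact contour $C$ for small $\eps$ (equivalently, one can work termwise with the power series $\sum_k f_k\sigma^k$ when $f$ is given by a convergent series).

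Finally, the identity (\ref{fab0}) asserts that $L:=\d_\alpha f(\alpha)$ is self-adjoint for the bilinear form $(A,B)\mapsto\Tr(AB)$. For $f(z)=z^k$ this follows from cyclic invariance of the trace and a reindexing,
\[
    \Tr\Big(\tau\sum_{j=0}^{k-1}\alpha^{j}\,\delta\alpha\,\alpha^{k-1-j}\Big)
    =
    \sum_{j=0}^{k-1}\Tr\big(\alpha^{k-1-j}\tau\alpha^{j}\,\delta\alpha\big)
    =
    \Tr\Big(\Big(\sum_{i=0}^{k-1}\alpha^{i}\tau\alpha^{k-1-i}\Big)\delta\alpha\Big),
\]
the inner sum on the right being $L(\tau)$ by the monomial step; this extends to polynomials by linearity and to analytic $f$ either by density or by applying cyclic invariance directly to the integrand $\Tr\big(\tau(zI-\alpha)^{-1}\delta\alpha(zI-\alpha)^{-1}\big)$ from the previous step. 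The only real care needed is the bookkeeping in the second step --- verifying $\mathrm{spec}(\sigma)=\mathrm{spec}(\alpha)$ so that a single contour serves for both functional calculi, and justifying the exchange of differentiation with the Cauchy integral (or with the power series) --- both being routine consequences of the compactness of $C$ and the joint analyticity of $(z,\eps)\mapsto(zI-\alpha-\eps\beta)^{-1}$ near $\eps=0$; I anticipate no conceptual obstacle.
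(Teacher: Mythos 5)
Your proposal is correct and follows essentially the same route as the paper: the core of both arguments is the Cauchy integral representation of $f(\sigma)$, the observation that the $(2,1)$-block of the resolvent $(zI_{2r}-\sigma)^{-1}$ is $(zI_r-\alpha)^{-1}\beta(zI_r-\alpha)^{-1}$, and the cyclic property of the trace applied to the resulting two-sided (sandwich) form to get (\ref{fab0}). Your preliminary monomial computation is a harmless warm-up that the paper omits, but it does not change the substance of the argument.
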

\begin{proof}
The conditions of the lemma secure the Cauchy integral representations for the matrices
\begin{align}
\label{fres_ffres}
    f(\alpha)
    & =
    \frac{1}{2\pi i}
    \oint_C
    f(z)
    (zI_r - \alpha)^{-1}
    \rd z,
    \qquad
    f(\sigma)
     =
    \frac{1}{2\pi i}
    \oint_C
    f(z)
    (zI_{2r}-\sigma)^{-1}
   \rd z,
\end{align}
where the integration is over a counterclockwise oriented closed contour $C$, which lies in the analyticity domain of $f$ and embraces the spectrum of $\alpha$ (and hence,  that of the matrix $\sigma$ in (\ref{sig})).
Since
$
    \d_\alpha ((zI_r - \alpha)^{-1})(\beta)
    =
    (zI_r - \alpha)^{-1}
    \beta
    (zI_r - \alpha)^{-1}
$
is the $(2,1)$-block of the matrix
$
    (zI_{2r}- \sigma)^{-1}
    =
    {\scriptsize\begin{bmatrix}
      (zI_r - \alpha)^{-1} & 0\\
      (zI_r - \alpha)^{-1}\beta (zI_r - \alpha)^{-1} & (zI_r - \alpha)^{-1}
    \end{bmatrix}}
$,
then, in view of the first equality in (\ref{fres_ffres}),
\begin{equation}
\label{int}
    \d_\alpha f(\alpha)(\beta)
     =
        \frac{1}{2\pi i}
    \oint_C
    f(z)
    \d_\alpha
    ((zI_r - \alpha)^{-1})(\beta)
    \rd z
    =
        \frac{1}{2\pi i}
    \oint_C
    f(z)
    (zI_r - \alpha)^{-1}\beta (zI_r - \alpha)^{-1}
    \rd z
\end{equation}
coincides with the corresponding block of the matrix $f(\sigma)$ from (\ref{fres_ffres}), thus establishing (\ref{fmat'}). The relation (\ref{int}) allows the Frechet derivative $\d_\alpha f(\alpha)$, which is a linear operator on $\mC^{r\x r}$,  to be represented as an integral over a parametric family of sandwich operators (\ref{sand}):
\begin{equation}
\label{int1}
    \d_\alpha f(\alpha)
     =
        \frac{1}{2\pi i}
    \oint_C
    f(z)
    \[[[
        (zI_r - \alpha)^{-1},
        (zI_r - \alpha)^{-1}
    \]]]
    \rd z.
\end{equation}
Therefore,  the identity (\ref{fab0}) for the first variation $\delta f(\alpha) = \d_\alpha f(\alpha)(\delta \alpha)$ is inherited from sandwich operators of the form $\[[[\varphi, \varphi\]]]$ with arbitrary $\varphi \in \mC^{r\x r}$,  since for any such operator,
$
    \Tr(\tau \[[[\varphi, \varphi\]]](\beta))
    =
    \Tr(\tau \varphi \beta  \varphi)
    =
    \Tr(\varphi\tau \varphi \beta  )
    =
    \Tr (\[[[\varphi, \varphi\]]](\tau) \beta)
$
by the cyclic property of the matrix trace.
\end{proof}

Note that if $f$ is an entire function,  with a globally convergent power series expansion
\begin{equation}
\label{fexp}
    f(z)
    =
    \sum_{k=0}^{+\infty} f_k z^k,
    \qquad
    z \in \mC
\end{equation}
(where $f_k$ are complex coefficients), then the Frechet derivative of $f(\alpha)$ with respect to $\alpha \in \mC^{r\x r}$ admits an alternative (compared to (\ref{int1})) expansion over the sandwich operators (\ref{sand}):
\begin{equation}
\label{dfdalpha}
    \d_\alpha f(\alpha)
    =
    \sum_{k=1}^{+\infty}
    f_k
    \sum_{j=0}^{k-1}
    \[[[
        \alpha^j,
        \alpha^{k-1-j}
    \]]]
    =
    \sum_{j,k=0}^{+\infty}
    f_{j+k+1}
    \[[[
        \alpha^j,
        \alpha^k
    \]]].
\end{equation}
Since the operator norm of (\ref{sand}), induced by the Frobenius norm $\|\cdot\|_\rF$,  is expressed as
\begin{equation}
\label{sbound}
    \|\[[[\sigma, \tau\]]]\|
    =
    \|\sigma\|\|\tau\|
\end{equation}
in terms of the operator norms of matrices,
then by the submultiplicativity of operator norms, the first equality in (\ref{dfdalpha}) implies that
\begin{equation}
\label{dfdanorm}
    \|\d_\alpha f(\alpha)\|
    \<
    \sum_{k=1}^{+\infty}
    |f_k|
    \sum_{j=0}^{k-1}
    \|
        \alpha^j\|
        \|\alpha^{k-1-j}\|
        \<
    \sum_{k=1}^{+\infty}
    k
    |f_k|
    \|\alpha\|^{k-1}.
\end{equation}
In particular, if the coefficients, starting from $f_1$,   in (\ref{fexp}) are real and nonnegative (that is, $f_k\> 0$ for all $k\> 1$), then (\ref{dfdanorm}) leads to
\begin{equation}
\label{dfdanorm1}
    \|\d_\alpha f(\alpha)\|
        \<
    \sum_{k=1}^{+\infty}
    k
    f_k
    \|\alpha\|^{k-1}
    =
    f'(\|\alpha\|),
\end{equation}
where the usual derivative $f'$  of $f$ (as a function of a complex variable) is evaluated at $\|\alpha\|$.

\section{Mixed moments of transfer functions}\label{sec:mixint} For the purposes of Section~\ref{sec:statespace}, the following two lemmas demonstrate the computation of some of the mixed product integrals (\ref{intF15}) in state space. We will only consider strictly proper transfer functions since any proper one is the sum of its limit value at infinity and a strictly proper transfer function, thus allowing the mixed moment of proper transfer functions to be expressed in terms of lower-order moments for strictly proper transfer functions.

\begin{lemma}
\label{lem:FFF}
Suppose $F_k$, with $k = 1,2,3$,  are  strictly proper transfer functions with (in general,  complex) state-space realization matrix triples $(A_k,B_k,C_k)$, where the matrices $A_k$ are Hurwitz.  Then their third-order moment in (\ref{intF123}) is computed as
\begin{equation}
\label{intF123state}
    \bM(F_1, F_2, F_3)
    =
    B_1^* Q_{12}P_{23} C_3^*,
\end{equation}
where $Q_{12}$, $P_{23}$ are the observability and controllability ``cross-Gramians'' of the pairs $(A_1,C_1)$, $(A_2,C_2)$  and  $(A_2,B_2)$, $(A_3,B_3)$, respectively,  satisfying the algebraic Syl\-ves\-ter equations (ASEs)
\begin{equation}
\label{ASEs}
    A_1^* Q_{12} + Q_{12} A_2 + C_1^* C_2 = 0,
    \qquad
    A_2 P_{23} + P_{23} A_3^* + B_2 B_3^* = 0.
\end{equation}
\end{lemma}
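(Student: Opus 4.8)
The plan is to reduce the frequency-domain integral $\bM(F_1,F_2,F_3) = \frac{1}{2\pi}\int_\mR F_1(i\lambda)^* F_2(i\lambda) F_3(i\lambda)^*\rd\lambda$ to an algebraic expression by inserting the state-space realizations and applying the Parseval/residue machinery twice, once on each side. Writing $F_k(i\lambda) = C_k(i\lambda I - A_k)^{-1} B_k$, the conjugate transposes become $F_k(i\lambda)^* = B_k^* (-i\lambda I - A_k^*)^{-1} C_k^*$, which are \emph{antistable} transfer functions (poles of $F_k(\cdot)^*$ are the mirror images of those of $F_k$, hence in the open right half-plane since $A_k$ is Hurwitz). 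Thus the integrand $F_1^* F_2 F_3^*$ is a product of an antistable, a stable, and an antistable factor.

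First I would handle the pairing $F_1^* F_2$: by the standard cross-Gramian identity (a one-alternation mixed moment, as in Appendix~\ref{sec:mixint} or \cite[Section~7]{VP_2022_JFI}), one has $\frac{1}{2\pi}\int_\mR F_1(i\lambda)^* (\,\cdot\,) F_2(i\lambda)\rd\lambda$-type contractions governed by the solution $Q_{12}$ of the first ASE in (\ref{ASEs}). Concretely, the identity
$$
  (-i\lambda I - A_1^*)^{-1} C_1^* C_2 (i\lambda I - A_2)^{-1}
  =
  (-i\lambda I - A_1^*)^{-1} Q_{12} + Q_{12}(i\lambda I - A_2)^{-1}
$$
follows immediately from $A_1^* Q_{12} + Q_{12} A_2 + C_1^* C_2 = 0$ by multiplying on the left by $(-i\lambda I - A_1^*)^{-1}$ and on the right by $(i\lambda I - A_2)^{-1}$ and rearranging. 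Symmetrically, the second ASE gives
$$
  (i\lambda I - A_2)^{-1} B_2 B_3^* (-i\lambda I - A_3^*)^{-1}
  =
  (i\lambda I - A_2)^{-1} P_{23} - P_{23}(-i\lambda I - A_3^*)^{-1}.
$$
Substituting these into $B_1^*(-i\lambda I - A_1^*)^{-1} C_1^* C_2 (i\lambda I - A_2)^{-1} B_2 B_3^* (-i\lambda I - A_3^*)^{-1} C_3^*$ and expanding, the integrand decomposes into a sum of terms each of which is a \emph{single} resolvent $(i\lambda I - A_2)^{-1}$, $(-i\lambda I - A_1^*)^{-1}$, or $(-i\lambda I - A_3^*)^{-1}$ sandwiched between constant matrices, together with one ``already-contracted'' term $B_1^* Q_{12} P_{23} C_3^*$ whose only $\lambda$-dependence has cancelled. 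Each surviving single-resolvent term integrates to zero: $\frac{1}{2\pi}\int_\mR (i\lambda I - A)^{-1}\rd\lambda = 0$ when $A$ is Hurwitz (close the contour in the right half-plane, which encloses no poles) and $\frac{1}{2\pi}\int_\mR (-i\lambda I - A^*)^{-1}\rd\lambda = 0$ when $A$ is Hurwitz (close in the left half-plane). What remains is exactly $B_1^* Q_{12} P_{23} C_3^*$, which is (\ref{intF123state}).

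The main obstacle I anticipate is purely bookkeeping: after the two substitutions the integrand expands into several cross terms, and one must verify that every term other than $B_1^* Q_{12} P_{23} C_3^*$ reduces to an integral of a single resolvent (possibly multiplied by constants on both sides, which does not affect the vanishing) rather than, say, a product of two resolvents with poles on the same side — the latter would also vanish but requires a separate argument, while a product with poles on opposite sides would \emph{not} vanish and would signal an algebra error. Careful tracking of which half-plane each resolvent's poles lie in, using that all three $A_k$ are Hurwitz, is what makes the cancellation work; the convergence of the original integral is guaranteed since at least two (in fact all three) factors are strictly proper, so $\|F_1^* F_2 F_3^*\| = O(1/\lambda^3)$ at infinity. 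I would also note that existence and uniqueness of $Q_{12}$ and $P_{23}$ solving (\ref{ASEs}) follows from the spectra of $A_1^*$ and $-A_2$ (respectively $A_2$ and $-A_3^*$) being disjoint, which holds because all the $A_k$ are Hurwitz.
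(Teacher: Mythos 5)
Your frequency-domain strategy (partial-fraction splitting of the resolvent products via the two ASEs, followed by term-by-term integration) is genuinely different from the paper's proof, which works in the time domain: the paper writes $\bM(F_1,F_2,F_3)=(g_1*g_2*g_3)(0)$ for $g_1(t)=f_1(-t)^*$, $g_2=f_2$, $g_3(t)=f_3(-t)^*$ and evaluates the convolution explicitly, the cross-Gramians appearing as $\int_0^{+\infty}\re^{uA_1^*}C_1^*C_2\re^{uA_2}\rd u$ and $\int_0^{+\infty}\re^{vA_2}B_2B_3^*\re^{vA_3^*}\rd v$. Your route can be made to work, but as written it has errors that would actually lead to the answer $0$. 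First, the second identity has a sign error: the ASE $A_2P_{23}+P_{23}A_3^*+B_2B_3^*=0$ gives $B_2B_3^*=(i\lambda I-A_2)P_{23}+P_{23}(-i\lambda I-A_3^*)$, hence $(i\lambda I-A_2)^{-1}B_2B_3^*(-i\lambda I-A_3^*)^{-1}=(i\lambda I-A_2)^{-1}P_{23}+P_{23}(-i\lambda I-A_3^*)^{-1}$, with a plus. Second, the anticipated structure of the expansion is wrong: the integrand contains only one copy of $(i\lambda I-A_2)^{-1}$, so the two identities must be applied successively, and the result (between $B_1^*$ and $C_3^*$) is $Q_{12}P_{23}(-i\lambda I-A_3^*)^{-1}+Q_{12}(i\lambda I-A_2)^{-1}P_{23}+(-i\lambda I-A_1^*)^{-1}Q_{12}B_2B_3^*(-i\lambda I-A_3^*)^{-1}$; there is no $\lambda$-independent ``already-contracted'' term, and there cannot be one, since a constant would make $\frac{1}{2\pi}\int_\mR(\cdot)\rd\lambda$ diverge.

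The decisive gap is the claim that single-resolvent terms integrate to zero. They do not: $(i\lambda I-A)^{-1}$ is only $O(1/\lambda)$, so the semicircular arc does not vanish when you close the contour; in fact $\frac{1}{2\pi}\int_{-R}^{R}(i\lambda I-A)^{-1}\rd\lambda=\frac{1}{\pi}\int_0^{+\infty}\frac{\sin(Rt)}{t}\re^{tA}\rd t\to\frac{1}{2}I$ as $R\to+\infty$ for Hurwitz $A$ (Fourier inversion of $I_+(t)\re^{tA}$ at its jump point $t=0$), and likewise for $(-i\lambda I-A^*)^{-1}$. The correct accounting is therefore: the two single-resolvent terms each contribute $\frac{1}{2}B_1^*Q_{12}P_{23}C_3^*$, while the remaining term --- a product of two resolvents whose poles both lie in the open lower half-plane --- is $O(1/\lambda^2)$ and vanishes by closing the contour in the upper half-plane, giving the total $B_1^*Q_{12}P_{23}C_3^*$. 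With your stated sign and your vanishing claim the two halves would instead cancel. Your final remark on existence and uniqueness of $Q_{12}$, $P_{23}$ (disjointness of the spectra of $A_1^*$ and $-A_2$, etc.) is correct.
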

\begin{proof}
The moment (\ref{intF123}) can be  represented in terms of the impulse response functions
\begin{equation}
\label{fk}
    f_k(t)
    :=
    I_+(t)
    C_k\re^{tA_k} B_k
    =
    \frac{1}{2\pi}
    \int_\mR
    \re^{i\lambda t}
    F_k(i\lambda)
    \rd \lambda,
    \qquad
    t \in \mR,
\end{equation}
with $k=1,2,3$,
as the convolution of the functions $g_1(t):= f_1(-t)^*$, $g_2(t): =f_2(t)$, $g_3(t): = f_3(-t)^*$ evaluated at $0$:
\begin{equation}
\label{MFFF}
    \bM(F_1,F_2,F_3)
    =
    (g_1 *  g_2  * g_3)(0)
\end{equation}
(in (\ref{fk}), we denote by   $I_+$ the indicator function of the set $\mR_+:= [0, +\infty)$).
This convolution is computed as
\begin{align}
\nonumber
    (g_1 *  g_2  * g_3)&(t)
     =
    \int_{\mR^2}
    f_1(u)^*
    f_2(t+u+v)
    f_3(v)^*
    \rd u
    \rd v\\
\nonumber
    = &
    B_1^*
    \Big(
    I_+(t)
    \int_{\mR_+^2}
    \re^{u A_1^*}
    C_{12}
    \re^{(t+u+v) A_2}
    B_{23}
    \re^{v A_3^*}
    \rd u\rd v\\
\nonumber
    & +
    I_-(t)
    \int_{\{(u,v)\in \mR_+^2:\,  u+v\> -t\}}
    \re^{u A_1^*}
    C_{12}
    \re^{(t+u+v) A_2}
    B_{23}
    \re^{v A_3^*}
    \rd u\rd v
    \Big)
    C_3^*\\
\nonumber
    = &
    B_1^*
    \Big(
        I_+(t)
    \underbrace{
    \int_0^{+\infty}
    \re^{u A_1^*}
    C_{12}
    \re^{uA_2}
    \rd u}_{Q_{12}}
    \re^{tA_2}
    \underbrace{
    \int_0^{+\infty}
    \re^{v A_2}
    B_{23}
    \re^{v A_3^*}
    \rd v}_{P_{23}}\\
\nonumber
    & +
    I_-(t)
    \Big(
    \int_{-t}^{+\infty}
    \re^{u A_1^*}
    C_{12}
    \Big(
    \int_0^{+\infty}
    \re^{(t+u+v) A_2}
    B_{23}
    \re^{v A_3^*}
    \rd v
    \Big)
    \rd u\\
\nonumber
    & +
    \int_0^{-t}
    \re^{u A_1^*}
    C_{12}
    \Big(
    \int_{-t-u}^{+\infty}
    \re^{(t+u+v) A_2}
    B_{23}
    \re^{v A_3^*}
    \rd v
    \Big)
    \rd u
    \Big)
    \Big)
    C_3^*\\
\label{g123}
    = &
    B_1^*
    \Big(
        I_+(t)
        Q_{12}
    \re^{tA_2}
    P_{23}
 +
    I_-(t)
    \Big(
    \re^{-t A_1^*}
    Q_{12}
    P_{23}
    +
    \int_0^{-t}
    \re^{u A_1^*}
    C_{12}
    P_{23}
    \re^{(-t-u) A_3^*}
    \rd u
    \Big)
    \Big)
    C_3^*,
    \qquad
    t \in \mR,
\end{align}
where $I_-:= 1-I_+$ is the indicator function of the set $(-\infty, 0)$, and $C_{12}:= C_1^* C_2$, $B_{23}:= B_2 B_3^*$  for brevity.  The matrices $Q_{12}$, $P_{23}$ in (\ref{g123}) are unique solutions of the corresponding ASEs in (\ref{ASEs}) since the matrices $A_1$, $A_2$, $A_3$ are Hurwitz.
The right-hand side of (\ref{MFFF}) can now be found by letting $t=0$ in (\ref{g123}), which leads to (\ref{intF123state}).
\end{proof}

Note that the matrix $Q_{12}P_{23}$ satisfies the ASE $
    A_1^* Q_{12}P_{23} - Q_{12}P_{23} A_3^* -Q_{12}B_2B_3^* + C_1^*C_2 P_{23} = 0
$,
 obtained by right multiplying the first ASE in (\ref{ASEs}) by $P_{23}$ and left multiplying the second ASE in (\ref{ASEs}) by $Q_{12}$.

\begin{lemma}
\label{lem:F15}
Suppose $F_k$, with $k = 1,\ldots, 5$,  are  strictly proper transfer functions with state-space realization triples $(A_k,B_k,C_k)$ and Hurwitz matrices $A_k$.  Then their moment (\ref{intF15}) is computed as
\begin{equation}
\label{intF15state}
    \bM(F_1,\ldots, F_5)
    =
    B_1^*
    (
        Q_{12}
        P_{25}
    +
    Q_{14}P_{45}
    )
    C_5^*
\end{equation}
in terms of the matrices $Q_{14}$, $P_{25}$ which are found as the unique solutions of the ASEs
\begin{equation}
\label{ASE14_ASE25}
    A_1^* Q_{14}
    +
    Q_{14}
    A_4
    +
        Q_{12}
    P_{23}
    C_3^*C_4
    +
    C_1^*C_2
    P_{23}
    Q_{34}
     =
    0,
    \qquad
    A_2 P_{25}
    +
    P_{25}
    A_5^*
    +
        P_{23}
        C_3^*C_4
        P_{45}
         =0.
\end{equation}
Here, $Q_{12}$, $P_{23}$ are the Gramians from (\ref{ASEs}), and $Q_{34}$, $P_{45}$ are the observability and controllability cross-Gramians for the pairs $(A_3,C_3)$, $(A_4,C_4)$ and  $(A_4,B_4)$, $(A_5,B_5)$, respectively:
\begin{equation}
\label{ASE3445}
    A_3^*
    Q_{34}
    +
    Q_{34}
    A_4
    +
    C_3^*C_4
    =0,
    \qquad
    A_4 P_{45} + P_{45} A_5^* + B_4 B_5^* = 0.
\end{equation}

\end{lemma}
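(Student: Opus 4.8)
The plan is to follow the time-domain (Parseval) argument of the proof of Lemma~\ref{lem:FFF}, but to organize the five-fold product~(\ref{intF15}) by grouping it as $(F_1^* F_2 F_3^*)(F_4 F_5^*)$ and splitting each half into its stable and anti-stable parts. Writing $\Psi_L := F_1^* F_2 F_3^*$ and $\Psi_R := F_4 F_5^*$, both of which are strictly proper, we have $\bM(F_1, \ldots, F_5) = \frac{1}{2\pi}\int_\mR \Psi_L(i\lambda)\Psi_R(i\lambda)\,\rd \lambda$. First I would decompose $\Psi_L = \Psi_L^+ + \Psi_L^-$ and $\Psi_R = \Psi_R^+ + \Psi_R^-$, where $(\cdot)^+$ collects the poles in the open left half-plane (causal impulse response, supported on $[0,+\infty)$) and $(\cdot)^-$ those in the open right half-plane. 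These splittings are produced by the partial-fraction / Sylvester identity $(i\lambda I - \alpha)^{-1} M (i\lambda I + \beta)^{-1} = \sigma (i\lambda I + \beta)^{-1} - (i\lambda I - \alpha)^{-1}\sigma$, valid for Hurwitz $\alpha$, $\beta$ with $\sigma$ the unique solution of $\alpha \sigma + \sigma\beta + M = 0$ (and its mirror image): applied to $F_4 F_5^*$ it yields the cross-Gramian $P_{45}$ of~(\ref{ASE3445}); applied inside $\Psi_L$ (first to $F_1^* F_2$, then to the product of its stable part with $F_3^*$) it yields $Q_{12}$ and $P_{23}$ of~(\ref{ASEs}), together with explicit state-space realizations --- $\Psi_L^+$ realized by $(A_2,\, P_{23}C_3^*,\, B_1^* Q_{12})$, $\Psi_R^+$ by $(A_4,\, P_{45}C_5^*,\, C_4)$, $\Psi_R^-$ by $(-A_5^*,\, C_5^*,\, -C_4 P_{45})$, and $\Psi_L^-$ by a combined (anti-Hurwitz) realization whose state matrix has diagonal blocks $-A_1^*$, $-A_3^*$ and off-diagonal block $Q_{12}B_2 B_3^*$, input matrix $\begin{bmatrix} 0 \\ C_3^* \end{bmatrix}$ and output matrix $\begin{bmatrix} B_1^* & -B_1^* Q_{12}P_{23} \end{bmatrix}$.

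Next, since $\Psi_L^+$, $\Psi_R^+$ are analytic in the closed right half-plane and $\Psi_L^-$, $\Psi_R^-$ in the closed left half-plane, with all four products $O(\lambda^{-2})$ at infinity, closing the contour gives $\frac{1}{2\pi}\int_\mR \Psi_L^+\Psi_R^+\,\rd\lambda = \frac{1}{2\pi}\int_\mR \Psi_L^-\Psi_R^-\,\rd\lambda = 0$, so only the two mixed terms survive. Each of these is evaluated exactly as in Lemma~\ref{lem:FFF}: by Parseval it collapses to $\int_0^{+\infty}$ of a product of two matrix exponentials, hence to the solution of an algebraic Sylvester equation. The first mixed term becomes $\frac{1}{2\pi}\int_\mR \Psi_L^+\Psi_R^-\,\rd\lambda = B_1^* Q_{12}\big(\int_0^{+\infty} \re^{tA_2}\, P_{23}C_3^* C_4 P_{45}\, \re^{tA_5^*}\,\rd t\big) C_5^* = B_1^* Q_{12} P_{25} C_5^*$, since the exponential Gramian solves $A_2 X + X A_5^* + P_{23}C_3^* C_4 P_{45} = 0$, the second equation in~(\ref{ASE14_ASE25}). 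The second mixed term $\frac{1}{2\pi}\int_\mR \Psi_L^-\Psi_R^+\,\rd\lambda$ reduces, using the combined realization of $\Psi_L^-$, to $-\begin{bmatrix} B_1^* & -B_1^* Q_{12}P_{23} \end{bmatrix}\begin{bmatrix} Y \\ Q_{34} \end{bmatrix} P_{45}C_5^*$, where the lower block equals $Q_{34}$ (it solves the first equation in~(\ref{ASE3445})) and the upper block $Y$ solves $A_1^* Y + Y A_4 - Q_{12}B_2 B_3^* Q_{34} = 0$; reassembling, this equals $B_1^*\big(Q_{12}P_{23}Q_{34} - Y\big) P_{45} C_5^*$.

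Finally I would identify $Q_{12}P_{23}Q_{34} - Y$ with $Q_{14}$: by uniqueness of the solution of $A_1^* Q_{14} + Q_{14}A_4 + Q_{12}P_{23}C_3^* C_4 + C_1^* C_2 P_{23}Q_{34} = 0$ (since $A_1$, $A_4$ are Hurwitz), it suffices to substitute $A_1^* Q_{12} = -Q_{12}A_2 - C_1^* C_2$, $A_2 P_{23} = -P_{23}A_3^* - B_2 B_3^*$, $A_3^* Q_{34} = -Q_{34}A_4 - C_3^* C_4$ and the equation for $Y$, and verify that $Q_{12}P_{23}Q_{34} - Y$ satisfies this ASE. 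Adding the two surviving terms then yields the claimed $\bM(F_1, \ldots, F_5) = B_1^*(Q_{12}P_{25} + Q_{14}P_{45}) C_5^*$. I expect the main obstacle to be precisely this bookkeeping: obtaining the block state-space realizations of the stable and anti-stable parts with consistent sign conventions, and threading the defining ASEs~(\ref{ASEs}),~(\ref{ASE3445}) through to confirm that the Sylvester solutions emerging from the Parseval integrals are exactly $Q_{14}$ and $P_{25}$; the remaining steps are routine repetitions of the argument in Lemma~\ref{lem:FFF}. An equivalent route instead decomposes only the middle product $F_3^* F_4$ into stable-plus-anti-stable parts (introducing $Q_{34}$ there) and reduces $\bM(F_1, \ldots, F_5)$ to two third-order moments handled directly by Lemma~\ref{lem:FFF}, producing the same $Q_{14}$, $P_{25}$.
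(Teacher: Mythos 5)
Your proposal is correct: every step checks out (the four stable/anti\-stable realizations you write down for $\Psi_L^{\pm}$, $\Psi_R^{\pm}$ are right, the two ``like-signed'' cross terms do vanish by contour closure, the mixed term $\Psi_L^+\Psi_R^-$ yields $B_1^*Q_{12}P_{25}C_5^*$ with $P_{25}$ solving the second ASE in (\ref{ASE14_ASE25}), and the identification $Q_{14}=Q_{12}P_{23}Q_{34}-Y$ does follow by threading the ASEs (\ref{ASEs}), (\ref{ASE3445}) and the equation for $Y$ through the uniqueness of the Sylvester solution). However, your route is the exact Parseval dual of the paper's, rather than a reproduction of it. The paper stays entirely in the time domain: it reuses the two-branch formula (\ref{g123}) for $(g_1*g_2*g_3)(t)$ on all of $\mR$, computes $(g_4*g_5)(t)$ likewise, and evaluates $\int_\mR (g_1*g_2*g_3)(t)\,(g_4*g_5)(-t)\,\rd t$ directly; the splitting into $t\ge 0$ and $t<0$ there corresponds precisely to your $\Psi_L^+\Psi_R^-$ and $\Psi_L^-\Psi_R^+$ terms, while your two vanishing terms never appear because the supports of the causal and anticausal branches are automatically disjoint. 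The trade-off is visible at $Q_{14}$: in the paper it emerges directly as the exponential integral $R+T=\int_0^{+\infty}\re^{uA_1^*}(Q_{12}P_{23}C_3^*C_4+C_1^*C_2P_{23}Q_{34})\re^{uA_4}\rd u$, which manifestly solves the first ASE in (\ref{ASE14_ASE25}), whereas your frequency-domain bookkeeping produces it in the disguised form $Q_{12}P_{23}Q_{34}-Y$ and costs you an extra algebraic verification; on the other hand your approach avoids the somewhat laborious case analysis over the sign of $t+u+v$ hidden in (\ref{g123}) and makes the structural reason for there being exactly two surviving terms (analyticity in a half-plane plus decay) completely transparent. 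Your closing remark about an alternative reduction to two third-order moments via a split of $F_3^*F_4$ is also plausible, though you would need to check that the resulting ``$C$-matrices'' remain constant rather than frequency-dependent before Lemma~\ref{lem:FFF} can be invoked verbatim.
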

\begin{proof}
By using the impulse response functions (\ref{fk}) with $k=1, \ldots, 5$, the moment (\ref{intF15})  is represented as the convolution of the functions $g_1(t):= f_1(-t)^*$, $g_2(t):= f_2(t)$, $\ldots$, $g_5(t):= f_1(-t)^*$,  evaluated at $0$:
\begin{equation}
\label{MFFFFF}
    \bM(F_1,\ldots,F_5)
    =
    (g_1*\ldots *g_5)(0)
    =
    \int_{\mR}
    (g_1*g_2*g_3)(t)
    (g_4*g_5)(-t)
    \rd t.
\end{equation}
While the convolution $g_1*g_2*g_3$ is found in (\ref{g123}), the convolution $g_4*g_5$ is computed as
\begin{align}
\nonumber
    (g_4*g_5)(t)
    &=
    \int_{\max(0,t)}^{+\infty}
    f_4(s)f_5(s-t)^*
    \rd s
    =
    C_4
    \int_{\max(0,t)}^{+\infty}
    \re^{sA_4}
    B_{45}
    \re^{(s-t)A_5^*}
    \rd s
    C_5^*\\
\nonumber
    &=
    C_4
    \Big(
    I_-(t)
    \underbrace{
    \int_0^{+\infty}
    \re^{s A_4}
    B_{45}
    \re^{sA_5^*}
    \rd s}_{P_{45}}
    \re^{-tA_5^*}
    +
    I_+(t)
    \int_t^{+\infty}
    \re^{sA_4}
    B_{45}
    \re^{(s-t)A_5^*}
    \rd s
    \Big)
    C_5^*\\
\label{g4g5}
    &=
    C_4
    (
    I_-(t)
    P_{45}
    \re^{-tA_5^*}
    +
    I_+(t)
    \re^{tA_4}
    P_{45}
    )
    C_5^*,
\end{align}
where $B_{45}:= B_4 B_5^*$  for brevity, and $P_{45}$ is the Gramian from (\ref{ASE3445}). Therefore, a combination of (\ref{g123}) with (\ref{g4g5}) leads to
\begin{align}
\nonumber
    \int_{\mR}
    (g_1*g_2*g_3)&(t)
    (g_4*g_5)(-t)
    \rd t
     =
    B_1^*
    \Big(
        Q_{12}
        \underbrace{
        \int_0^{+\infty}
        \re^{tA_2}
        P_{23}
        C_{34}
        P_{45}
        \re^{tA_5^*}
        \rd t}_{P_{25}}\\
\nonumber
    & +
    \int_0^{+\infty}
    \Big(
    \re^{t A_1^*}
    Q_{12}
    P_{23}
    +
    \int_0^t
    \re^{u A_1^*}
    C_{12}
    P_{23}
    \re^{(t-u) A_3^*}
    \rd u
    \Big)
    C_{34}
    \re^{tA_4}
    \rd t
    P_{45}
    \Big)
    C_5^*\\
\nonumber
     = &
    B_1^*
    \Big(
        Q_{12}
        P_{25}
    +
    \Big(
    \underbrace{\int_0^{+\infty}
    \re^{t A_1^*}
    Q_{12}
    P_{23}
    C_{34}
    \re^{tA_4}
    \rd t}_{R} \\
\nonumber
    & +
    \int_0^{+\infty}
    \re^{u A_1^*}
    C_{12}
    P_{23}
    \Big(
    \int_u^{+\infty}
    \re^{(t-u) A_3^*}
    C_{34}
    \re^{tA_4}
    \rd t
    \Big)
    \rd u
    \Big)
    P_{45}
    \Big)
    C_5^*\\
\nonumber
     = &
    B_1^*
    \Big(
        Q_{12}
        P_{25}
        +
    \Big(R
    +
    \int_0^{+\infty}
    \re^{u A_1^*}
    C_{12}
    P_{23}
    \Big(
    \underbrace{
    \int_0^{+\infty}
    \re^{v A_3^*}
    C_{34}
    \re^{vA_4}
    \rd v
    }_{Q_{34}}
    \Big)
    \re^{uA_4}
    \rd u
    \Big)
    P_{45}
    \Big)
    C_5^*\\
\label{ggggg}
     = &
    B_1^*
    \Big(
        Q_{12}
        P_{25}
    +
    \Big(
    R
    +
    \underbrace{
    \int_0^{+\infty}
    \re^{u A_1^*}
    C_{12}
    P_{23}
    Q_{34}
    \re^{uA_4}
    \rd u}_{T}
    \Big)
    P_{45}
    \Big)
    C_5^*,
\end{align}
where the matrices
$
    Q_{14}
    :=
    R + T =
    \int_0^{+\infty}
    \re^{u A_1^*}
    (
        Q_{12}
    P_{23}
    C_{34}
    +
    C_{12}
    P_{23}
    Q_{34}
    )
    \re^{uA_4}
    \rd u
$ and $P_{25}$ satisfy the ASEs (\ref{ASE14_ASE25}), and use is made of $C_{34}:=C_3^* C_4$  along with  the Gramian  $Q_{34}$ from (\ref{ASE3445}). The relation (\ref{intF15state}) is now obtained by substituting (\ref{ggggg}) into (\ref{MFFFFF}).
\end{proof}

Note that the third and fifth-order moments of strictly proper stable transfer functions, computed in Lemmas~\ref{lem:FFF}, \ref{lem:F15},  are used in (\ref{chitheta11}).

\end{document}